\documentclass[11pt]{article}
\usepackage{geometry,graphicx,latexsym,amssymb,verbatim, multicol}
\usepackage{amsmath, amssymb, amsthm}
\usepackage{tikz}
\usetikzlibrary{calc}

\textwidth=6.0in \textheight=8.5in \evensidemargin=0in
\oddsidemargin=0in \topmargin=0in \topskip=0pt \baselineskip=12pt
\parskip=8pt
\parindent=1em

\newtheorem{thm}{Theorem}[section]
\newtheorem{lem}[thm]{Lemma}

\newtheorem{cor}[thm]{Corollary}
\newtheorem{obs}[thm]{Observation}

\renewcommand{\comment}[1]{}

\renewenvironment{proof}{\noindent {\it Proof.}}{$\Box$\\}

\newcommand{\ov}{\overline}

\begin{document}

\begin{center}
{\LARGE  Partitions of graphs into small and large sets}
\mbox{}\\[8ex]

Asen Bojilov,
Nedyalko Nenov\footnote{This work was partially supported by the Scientific Research Fund of the St. Kliment Ohridski University of Sofia under contract No 027, 2012.}\\[1ex]
{\small Faculty of Mathematics and Informatics\\
University of Sofia\\
Bulgaria\\
bojilov@fmi.uni-sofia.bg\\
nenov@fmi.uni-sofia.bg}\\

\begin{multicols}{2}

Yair Caro\\[1ex]
{\small Dept. of Mathematics and Physics\\
University of Haifa-Oranim\\
Tivon 36006, Israel\\
yacaro@kvgeva.org.il}

\columnbreak

Adriana Hansberg\footnote{Supported by a Juan de la Cierva Postdoctoral Fellowship; Research also supported by the Ministry of Education and Science, Spain, and the European Regional Development Fund (ERDF) under project MTM2011-28800-C02-02; and under the Catalonian Government project 1298 SGR2009.}\\[1ex]
{\small Dep. de Matem\`atica Aplicada III\\
UPC Barcelona\\
08034 Barcelona, Spain\\
adriana.hansberg@upc.edu}\\[2ex]

\end{multicols}

\end{center}

\begin{abstract}
Let $G$ be a graph on $n$ vertices. We call a subset $A$ of the vertex set $V(G)$ \emph{$k$-small} if, for every vertex $v \in A$, $\deg(v) \le n - |A| + k$. A subset $B \subseteq V(G)$ is called \emph{$k$-large} if, for every vertex $u \in B$, $\deg(u) \ge |B| - k - 1$. Moreover, we denote by $\varphi_k(G)$ the minimum integer $t$ such that there is a partition of $V(G)$ into $t$ $k$-small sets, and by $\Omega_k(G)$ the minimum integer $t$ such that there is a partition of $V(G)$ into $t$ $k$-large sets. In this paper, we will show tight connections between $k$-small sets, respectively $k$-large sets, and the $k$-independence number, the clique number and the chromatic number of a graph. We shall develop greedy algorithms to compute in linear time both $\varphi_k(G)$ and $\Omega_k(G)$ and prove various sharp inequalities concerning these parameters, which we will use to obtain refinements of the Caro-Wei Theorem, the Tur\'an Theorem and the Hansen-Zheng Theorem among other things.\\

\noindent
{\small \textbf{Keywords:} $k$-small sett, $k$-large set, $k$-independence, clique number, chromatic number}  \\
{\small \textbf{AMS subject classification: 05C69}}
\end{abstract}

\section{Introduction and Notation}

We start with the following basic definitions. Let $n$ and $m$ be two positive integers. Let $S = \{ 0 \le a_1 \le a_2 \le \ldots \le a_m \le n-1\}$ be a sequence of $m$ integers and $\ov{S} = \{0 \le b_m \le b_{m-1} \le \ldots \le b_1 \le n-1 \}$ be the complement sequence, where $b_i = n - a_i - 1$ for $1 \le i \le m$. Let $k \ge 0$ be an integer. A subsequence $A$ of $S$ is called \emph{$k$-small} if, for every member $x$ of $A$, $x \le n - |A| + k$. A subsequence $B$ of $S$ is called \emph{$k$-large} if, for every member $x$ of $B$, $x \ge |B| - k -1$. In particular, for the terminology of graphs, we have the following definitions. Let $G$ be a graph on $n$ vertices. We call a set of vertices $A \subseteq V(G)$ \emph{$k$-small} if, for every vertex $v \in A$, $\deg(v) \le n - |A| + k$. A subset $B \subseteq V(G)$ is called \emph{$k$-large} if, for every vertex $v \in B$, $\deg(v) \ge |B| - k - 1$. When $k=0$, we say that $A$ is a \emph{small} set ($\delta$-set in \cite{Nen,BoNe}) and $B$ a \emph{large} set. Let  $S_k(G)$ denote the maximum cardinality of a $k$-small set and $L_k(G)$  denote the maximum cardinality of a $k$-large set in $G$. Further, given a graph $G$, let $\varphi_k(G)$ be the minimum integer $t$ such that there is a partition of $V(G)$ into $t$ $k$-small sets, and let $\Omega_k(G)$ be the minimum integer $t$ such that there is a partition of $V(G)$ into $t$ $k$-large sets. When $k = 0$, we will set $\varphi(G)$ instead of $\varphi_0(G)$ and $\Omega(G)$ instead of $\Omega_0(G)$. Consider the following observations.\\

\begin{obs}\label{obs1}
Let $n$ and $m$ be two positive integers. Let $S = \{ 0 \le a_1 \le a_2 \le \ldots \le a_m \le n-1\}$ be a sequence and let $G$ be a graph.\\[-5ex]
\begin{enumerate}
\item[(i)] $A$ is a $k$-small subsequence of $S$ if and only if $\ov{A}$ is a $k$-large subsequence of $\ov{S}$;
\item[(ii)] $A$ is a $k$-small set in $G$ if and only if $A$ is a $k$-large set in $\ov{G}$;
\item[(iii)] $S_k(G) = L_k(\ov{G})$ and $L_k(G) = S_k(\ov{G})$;
\item[(iv)] $\varphi_k(G) = \Omega_k(\ov{G})$ and $\Omega_k(G) = \varphi_k(\ov{G})$.
\end{enumerate}
\end{obs}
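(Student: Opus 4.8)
The plan is to prove part (i) by a one-line algebraic computation, obtain (ii) as the graph-theoretic instance of (i) applied to the degree sequence, and then derive (iii) and (iv) as purely formal consequences of (ii).

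First I would prove (i). Let $A$ be a subsequence of $S$ and let $\ov{A}$ be the corresponding subsequence of $\ov{S}$, obtained by replacing each term $a_i$ occurring in $A$ by its complement $b_i = n - a_i - 1$; note that $|\ov{A}| = |A|$. The whole statement rests on the chain of equivalences, valid for a term $x = a_i$ with complement $\ov{x} = n - x - 1$,
\[
x \le n - |A| + k \iff n - x \ge |A| - k \iff n - x - 1 \ge |A| - k - 1 \iff \ov{x} \ge |\ov{A}| - k - 1 .
\]
Quantifying over all members of $A$ (equivalently, over all members of $\ov{A}$), the left-hand condition says exactly that $A$ is $k$-small in $S$, while the right-hand condition says exactly that $\ov{A}$ is $k$-large in $\ov{S}$; this is (i).

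Next I would deduce (ii). Order the vertices of $G$ so that their degrees form a sorted sequence $S = \{0 \le a_1 \le \dots \le a_n \le n-1\}$. Since $\deg_{\ovG}(v) = n - 1 - \deg_G(v)$ for every vertex $v$, the degree sequence of $\ovG$ is precisely the complement sequence $\ov{S}$. A vertex set $A$ is $k$-small in $G$ iff the corresponding subsequence of degrees is $k$-small in $S$, iff (by (i)) the complement subsequence is $k$-large in $\ov{S}$, iff $A$ is $k$-large in $\ovG$; equivalently, one may simply rerun the displayed computation with $x = \deg_G(v)$ and $\ov{x} = n - 1 - x = \deg_{\ovG}(v)$. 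This gives (ii).

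Finally, (iii) and (iv) are immediate from (ii) together with the fact that complementation is an involution, $\ov{\ovG} = G$. Indeed, (ii) says that the family of $k$-small sets of $G$ coincides with the family of $k$-large sets of $\ovG$; taking maximum cardinalities yields $S_k(G) = L_k(\ovG)$, and substituting $\ovG$ for $G$ gives $S_k(\ovG) = L_k(G)$. Likewise, a partition of $V(G)$ into $t$ $k$-small sets is literally the same object as a partition of $V(\ovG) = V(G)$ into $t$ $k$-large sets, so the corresponding minima agree, $\varphi_k(G) = \Omega_k(\ovG)$, and substituting $\ovG$ for $G$ gives $\Omega_k(G) = \varphi_k(\ovG)$. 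There is no real obstacle here; the only care needed is the bookkeeping that identifies subsequences of $S$ with vertex subsets of $G$, the use of $|\ov{A}| = |A|$, and the observation that the degree sequence of $\ovG$ is exactly $\ov{S}$.
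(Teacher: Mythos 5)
Your proposal is correct and follows essentially the same route as the paper: the same one-line algebraic equivalence $x \le n-|A|+k \iff n-x-1 \ge |A|-k-1$ establishes (i), the identical computation with $\deg_{\ov{G}}(v) = n-1-\deg_G(v)$ gives (ii), and (iii) and (iv) are read off as formal consequences. Your remarks on $|\ov{A}| = |A|$ and on the degree sequence of $\ov{G}$ being $\ov{S}$ are just slightly more explicit bookkeeping than the paper bothers to record.
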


\begin{proof}
(i) $A$ is a small subsequence of $S$ if and only if $a_i \le n - |A| + k$ for every $a_i \in A$, which is equivalent to $b_i \ge |A| -k-1$ for each $b_i = n - a_i - 1  \in \ov{A}$, meaning that $\ov{A}$ is a $k$-large subsequence of $\ov{S}$.\\
(ii) $A$ is a $k$ small set of $G$ if and only if $\deg_G(v) \le n - |A| + k$ for every $v \in A$, which is equivalent to $\deg_{\ov{G}}(v) = n - 1 - \deg_G(v) \ge |A| - k -1$ for every $v \in A$, meaning that $A$ is a $k$-large set in $\ov{G}$.\\
(iii) and (iv) follow directly from (ii).
\end{proof}

 A \emph{$k$-independent set} $A$ in $G$ is a subset of vertices of $G$ such that $|N(v) \cap A| \le k$ for every $v \in A$. The maximum cardinality of a $k$-independent set is denoted by $\alpha_k(G)$. Note that a $0$-independent set is precisely an independent set, so we will use the usual notation $\alpha(G)$ for the independence number instead of $\alpha_0(G)$. The well-known Caro-Wei bound \cite{Caro, Wei-1} $\alpha(G) \ge \sum_{v \in V(G)} \frac{1}{\deg(v)+1}$ was generalized by Favaron \cite{Fav} to $\alpha_k(G) \ge \sum_{v \in V(G)} \frac{k}{1+ k \deg(v)}$. Other generalizations and improvements  were given in \cite{CaTu, Jel}. For more information on the $k$-independence number see also the survey \cite{CFHV}.

Similarly, we call a subset $B \subseteq V(G)$ such that $|N(v) \cap B| \ge |B| - k -1$ for every $v \in B$ a \emph{$k$-near clique} and the cardinality of a maximum $k$-near clique will be denoted by $\omega_k(G)$. A $0$-near clique is precisely a clique and so we will use the usual notation for the clique number $\omega(G)$ instead of $\omega_0(G)$.

 The connection between $k$-independent sets and $k$-near cliques to $k$-small and $k$-large sets is given below.

 \begin{obs}
In a graph $G$, every $k$-independent set is a $k$-small set and every $k$-near clique is a $k$-large set;
 \end{obs}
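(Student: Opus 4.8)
The plan is to prove the two assertions essentially by counting neighbours inside and outside the set, with the second assertion then following painlessly from the first via complementation (Observation \ref{obs1}(ii)).

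First I would take a $k$-independent set $A$ in $G$ and fix an arbitrary vertex $v \in A$. I split the neighbours of $v$ into those lying in $A$ and those lying in $V(G) \setminus A$. By the definition of $k$-independence, $|N(v) \cap A| \le k$. On the other hand, since $v \in A$, the set $V(G) \setminus A$ has exactly $n - |A|$ vertices, so trivially $|N(v) \setminus A| \le n - |A|$. Adding these two bounds gives $\deg(v) = |N(v)\cap A| + |N(v)\setminus A| \le n - |A| + k$, which is exactly the condition for $A$ to be $k$-small; since $v$ was arbitrary, $A$ is $k$-small.

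For the second assertion, the cleanest route is to observe that if $B$ is a $k$-near clique in $G$, then for every $v \in B$ we have $|N_{\ov G}(v) \cap B| = (|B|-1) - |N_G(v)\cap B| \le (|B|-1) - (|B|-k-1) = k$, so $B$ is a $k$-independent set in $\ov G$. By the first part of the statement, $B$ is then a $k$-small set in $\ov G$, and by Observation \ref{obs1}(ii) this means $B$ is a $k$-large set in $G$. (Alternatively, one can argue directly: $\deg_G(v) \ge |N_G(v)\cap B| \ge |B|-k-1$ for every $v \in B$.)

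There is no real obstacle here — the only thing to be careful about is the bookkeeping with $|B|-1$ versus $|B|$ when passing to the complement (the vertex $v$ itself is not among its own neighbours), and making sure the inequalities are used in the correct direction. I would present the direct argument for the first claim and note the complementation argument for the second, as it emphasizes the duality already recorded in Observation \ref{obs1}.
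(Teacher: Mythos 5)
Your proposal is correct and matches the paper's argument: the paper's proof is just the terse observation that $\deg(v)\le n-|A|+k$ for $v\in A$ and $\deg(v)\ge|B|-k-1$ for $v\in B$, which is exactly the neighbour count you spell out (and the direct argument you give parenthetically for the second claim is precisely the paper's). The complementation detour through $\ov{G}$ is a harmless stylistic variant that adds nothing beyond the one-line direct bound.
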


 \begin{proof}
 Let $A$ be a $k$-independent set and $B$ a $k$-near clique of $G$. Then $\deg(v) \le n-|A|+k$ for every $v \in A$ and $\deg(v) \ge |B|-k-1$ for every $v \in B$. Hence, $A$ is a $k$-small set and $B$ a $k$-large set.
 \end{proof}

We denote by $\deg(v) = \deg_G(v)$ the \emph{degree} of the vertex $v$ in $G$ and $N_G(v)$ and $N_G[v]$ is its \emph{open} and, respectively, \emph{closed neighborhood} of $v$. With {\rm d}(G) we refer to the \emph{average degree} $\frac{1}{|V(G)|}\sum_{v \in V(G)} \deg(v)$ of $G$. Given the degree sequence $d_1 \le d_2 \le \ldots \le d_n$ of $G$, we will denote by $v_1, v_2, \ldots, v_n$ the vertices of $G$ ordered accordingly to the degree sequence, i.e. such that $\deg(v_i) = d_i$. Moreover, $\chi(G)$ is the chromatic number and $\theta(G)$ the clique-partition number of $G$. For notation not mentioned here, we refer the reader to \cite{West}.

The paper is organized in several sections as follows:
\begin{enumerate}
\item[1] Introduction and Notation
\item[2] Bounds on $S_k(G)$ and $L_k(G)$ with applications to upper and lower bounds on $\alpha_k(G)$ and $\omega_k(G)$
\item[3] Algorithms for $\varphi_k(G)$ and $\Omega_k(G)$
\item[4] Bounds on $\varphi_k(G)$ and $\Omega_k(G)$
\item[5] More applications to $\alpha(G)$ and $\omega(G)$
\item[6] Variations of small and large sets
\item[7] References
 \end{enumerate}

 \section{Bounds on $S_k(G)$ and $L_k(G)$ with applications to upper bounds on $\alpha_k(G)$ and $\omega_k(G)$}

Since every $k$-independent set of $G$ is a $k$-small set and every $k$-near clique of $G$ is a $k$-large set, one expects that the bounds on $S_k(G)$, $L_k(G)$, $\varphi(G)$ and $\Omega(G)$ can be derived using their arithmetic definitions, and that some properties will be also useful in obtaining bounds on the much harder to compute $\alpha_k(G)$ and $\omega_k(G)$. As we shall see in the sequel this is indeed the case and several refinements of the Caro-Wei Theorem \cite{Caro, Wei-1}, the Tur\'an Theorem \cite{Tur} and Hansen-Zheng Theorem \cite{HaZhe} are easily derived from bounds using $k$-small sets and $k$-large sets as well as some relations between $L_0(G)$ and $\chi(G)$. A lower bound on $\alpha(G)$ and $\omega(G)$ in terms of $\Omega(G)$ and $\varphi(G)$, respectively, illustrates the usefulness of working with small and large sets.\\

\begin{thm}\label{thm_basic_ineq}
Let $G$ be a graph. Then $\alpha(G) \ge \Omega(G)$ and $\omega(G) \ge \varphi(G)$.
\end{thm}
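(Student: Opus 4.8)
The two inequalities are in fact a single statement seen through complementation: since $\omega(H)=\alpha(\ovG[H])$ wait — more simply, $\omega(G)=\alpha(\ovG)$ and, by Observation~\ref{obs1}(iv), $\varphi(G)=\Omega(\ovG)$. Hence it suffices to prove that $\alpha(H)\ge\Omega(H)$ for \emph{every} graph $H$: taking $H=G$ yields $\alpha(G)\ge\Omega(G)$, and taking $H=\ovG$ yields $\alpha(\ovG)\ge\Omega(\ovG)$, i.e. $\omega(G)\ge\varphi(G)$. The plan for $\alpha(H)\ge\Omega(H)$ is to build, by a greedy peeling of closed neighbourhoods, a partition of $V(H)$ into at most $\alpha(H)$ large sets.

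Concretely I would set $H_1=H$ and, as long as $H_j$ has a vertex, choose a vertex $v_j$ of minimum degree in $H_j$, put $B_j=N_{H_j}[v_j]$, and continue with $H_{j+1}=H_j-B_j$. This terminates (each $B_j$ is non-empty) and gives a partition $B_1,\dots,B_s$ of $V(H)$. Two things then need checking. First, each $B_j$ is a large set of $H$: for every $u\in B_j$ we have $\deg_{H_j}(u)\ge\deg_{H_j}(v_j)=|B_j|-1$, and deleting vertices only decreases degrees, so $\deg_H(u)\ge|B_j|-1$ as well. Second, $v_1,\dots,v_s$ form an independent set of $H$: for $i<j$ every neighbour of $v_i$ in $H$ lies in $B_1\cup\dots\cup B_i$ (those surviving in $H_i$ lie in $N_{H_i}[v_i]=B_i$, the others were deleted with earlier $B_\ell$), whereas $v_j\in V(H_j)$ is disjoint from $B_1\cup\dots\cup B_i$; hence $v_iv_j\notin E(H)$. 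Therefore $s\le\alpha(H)$, and since $B_1,\dots,B_s$ is a partition into large sets, $\Omega(H)\le s\le\alpha(H)$.

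For the second inequality one can simply quote the complementation step above, or run the dual peeling directly in $G$: repeatedly remove $A_j=V(G_j)\setminus N_{G_j}(u_j)$ for a vertex $u_j$ of maximum degree in $G_j$. Then each $A_j$ (which contains $u_j$) is a small set of $G$, since for $v\in A_j$ one has $\deg_{G_j}(v)\le\deg_{G_j}(u_j)=|V(G_j)|-|A_j|$ and $v$ has at most $n-|V(G_j)|$ neighbours outside $G_j$, so $\deg_G(v)\le n-|A_j|$; moreover $u_1,\dots,u_s$ is a clique because $V(G_{j+1})=N_{G_j}(u_j)\subseteq N_G(u_j)$ forces $u_j$ to be adjacent in $G$ to every earlier $u_i$. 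This gives $\varphi(G)\le\omega(G)$. I do not expect a real obstacle here; the only delicate point is the bookkeeping that the peeled-off sets are large (respectively small) with respect to $G$ itself rather than to the shrinking subgraph $G_j$, and that the centres of the peeled neighbourhoods are pairwise non-adjacent (respectively adjacent).
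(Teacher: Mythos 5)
Your proof is correct and is essentially the paper's own argument: the same greedy peeling of the closed neighbourhood of a minimum-degree vertex, the same observation that the peeled sets are large in $G$ and that their centres form an independent set, and the same passage to the complement for $\omega(G)\ge\varphi(G)$. The direct dual peeling you sketch at the end is just that complementation argument unfolded, so nothing genuinely new is added.
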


\begin{proof}
Let $G_1 = G$ and let $x_1$ be a vertex of minimum degree in $G_1$. Now, for $i \ge 1$, let $x_i$ be a vertex of minimum degree in $G_i$ and define successively $G_{i+1} = G_i - N_{G_i}[x_i]$ and $V_i = N_{G_i}[x_i]$, until there are no vertices left, say until index $q$. In this way, we obtain a partition $V_1 \cup V_2 \cup \ldots \cup V_q$ of $V(G)$ into large sets, as, for every $v \in V_i$, $\deg_G(v) \ge \deg_{G_i}(v) \ge \deg_{G_i}(x_i) = |V_i| -1$. Hence, $q \ge \Omega(G)$. On the other side, $\{x_1, x_2, \ldots, x_q\}$ is an independent set by construction and thus $\alpha(G) \ge q$. Therefore, $\alpha(G) \ge \Omega(G)$ and also $\omega(G) = \alpha(\ov{G}) \ge \Omega(\ov{G}) = \varphi(G)$ and we are done.
\end{proof}

We mention that a more complicated proof of $\omega(G) \ge \varphi(G)$ was given in \cite{Nen}. One of the strongest lower bounds for the independence number of a graph is the so called residue of the degree sequence denoted $R(G)$ (see \cite{FMS, Tri, Jel}),  which is the number of zeros left in the end of the Havel-Hakimi algorithm. As we shall see later, computing $\Omega(G)$ requires $O(|V(G)|)$-time while the Havel-Hakimi algorithm requires $O(|E(G)|)$-time. While $R(G)$ does better than all of the lower bounds given in the survey \cite{Wil}, here are two examples showing that in one case $R(G)$ does better and in the other $\Omega(G)$ does better. For the star $G = K_{1,n}$, $R(G) = n-1$ while $\Omega(G) \sim \frac{n}{2}$. However, for the graph $G$ on $6$ vertices with degree sequence $1,2,2,3,3,3$, $\Omega(G) = 3$ while $R(G) = 2.$\\

While the above theorem gives lower bounds on $\alpha(G)$ and $\omega(G)$ in terms of $\Omega(G)$ and $\varphi(G)$, the next one gives upper bounds on $\alpha_k(G)$ and $\omega_k(G)$ in terms of $S_k(G)$ and $L_k(G)$.\\

\begin{thm}\label{thm_trivial-bounds}
Let $G$ be a graph on $n$ vertices and let $d_1 \le d_2 \le \ldots \le d_n$ its degree sequence. Then\\[-5ex]
\begin{enumerate}
\item[(i)] $S_k(G) \ge \alpha_k(G)$ and $L_k(G) \ge \omega_k(G)$;
\item[(ii)] $S_k(G) \ge \frac{n}{\varphi_k(G)}$ and $L_k(G) \ge \frac{n}{\Omega_k(G)}$;
\item[(iii)] $S_k(G)  = \max \{ s : d_s \le n - s +k \}$ and $\{v_1, v_2, \ldots, v_{S_k(G)}\}$ is a maximum $k$-small set of $G$;
\item[(iv)] $L_k(G)  = \max \{ t :  t - k - 1 \le d_{n- t +1}  \}$ and $\{v_{n-L_k+1}, v_{n-L_k+2}, \ldots, v_n\}$ is a maximum $k$-large set of $G$.
\end{enumerate}
\end{thm}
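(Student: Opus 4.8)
The plan is to take the four parts in order, with essentially all the work concentrated in (iii); parts (i) and (ii) are one-line arguments, and (iv) follows from (iii) by passing to the complement.

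For (i), recall that every $k$-independent set is a $k$-small set and every $k$-near clique is a $k$-large set (the Observation just before this theorem). Hence a maximum $k$-independent set, of size $\alpha_k(G)$, is in particular a $k$-small set, so $S_k(G)\ge\alpha_k(G)$; dually $L_k(G)\ge\omega_k(G)$. For (ii), fix a partition of $V(G)$ into $\varphi_k(G)$ $k$-small sets; by pigeonhole one part has at least $n/\varphi_k(G)$ vertices, and that part is $k$-small, so $S_k(G)\ge n/\varphi_k(G)$. The bound $L_k(G)\ge n/\Omega_k(G)$ is the identical argument with a partition into $\Omega_k(G)$ $k$-large sets.

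For (iii), write $s^{*}=\max\{s : d_s\le n-s+k\}$. This maximum exists since $s=1$ always works ($d_1\le n-1\le n-1+k$), and in fact $\{s:d_s\le n-s+k\}$ is an initial segment $\{1,\dots,s^{*}\}$: if $d_s\le n-s+k$ and $s'<s$, then $d_{s'}\le d_s\le n-s+k<n-s'+k$. First I would show $S_k(G)\ge s^{*}$ by checking that $A=\{v_1,\dots,v_{s^{*}}\}$ is $k$-small: for $i\le s^{*}$ we have $\deg(v_i)=d_i\le d_{s^{*}}\le n-s^{*}+k=n-|A|+k$. For the reverse inequality, let $A$ be any $k$-small set with $|A|=s$. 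Each of its $s$ vertices has degree at most $n-s+k$, so $G$ has at least $s$ vertices of degree $\le n-s+k$; consequently the $s$-th smallest degree satisfies $d_s\le n-s+k$ (otherwise $d_s,d_{s+1},\dots,d_n$ would be $n-s+1$ degrees all exceeding $n-s+k$, leaving at most $s-1$ vertices below the threshold, a contradiction). Hence $s\le s^{*}$, so $S_k(G)\le s^{*}$. Combining the two bounds gives $S_k(G)=s^{*}$, and the set $\{v_1,\dots,v_{s^{*}}\}$ produced above has precisely this cardinality, so it is a maximum $k$-small set.

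For (iv), I would invoke Observation~\ref{obs1}(iii) to get $L_k(G)=S_k(\ov{G})$. The increasing degree sequence of $\ov{G}$ is $d'_i=n-1-d_{n+1-i}$, and the $i$-th lowest-degree vertex of $\ov{G}$ is $v_{n+1-i}$. Applying (iii) to $\ov{G}$ and rewriting the defining condition $d'_s\le n-s+k$ as $n-1-d_{n+1-s}\le n-s+k$, i.e. $d_{n-s+1}\ge s-k-1$, yields $L_k(G)=S_k(\ov{G})=\max\{t: t-k-1\le d_{n-t+1}\}$, with $\{v_n,v_{n-1},\dots,v_{n-L_k(G)+1}\}$ a maximum $k$-small set of $\ov{G}$, hence (by Observation~\ref{obs1}(ii) applied with $\ov{G}$ in place of $G$, using $\ov{\ov G}=G$) a maximum $k$-large set of $G$. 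The main obstacle is the upper bound in (iii): one must notice that the mere existence of a $k$-small set of size $s$ forces $d_s\le n-s+k$ through the counting argument above. Once that is secured, everything else—including (iv) by complementation—is bookkeeping, the only care needed being the correct relabelling of degrees and vertices when passing to $\ov{G}$.
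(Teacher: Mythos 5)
Your proposal is correct and follows essentially the same route as the paper: (i) and (ii) by the same one-line arguments, (iii) by showing the initial segment $\{v_1,\dots,v_{s^*}\}$ is $k$-small and that any $k$-small set of size $s$ forces $d_s\le n-s+k$ (your counting phrasing and the paper's ordering of degrees within $A$ are the same observation), and (iv) by complementation via Observation~\ref{obs1}. The only cosmetic difference is that you prove $L_k(G)\ge n/\Omega_k(G)$ directly by pigeonhole where the paper passes to $\ov{G}$, which changes nothing.
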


 \begin{proof}
(i) Since a  $k$-independent set is a $k$-small set and a $k$-near-clique is a $k$-large set,  $S_k(G) \ge \alpha_k(G)$ and $L_k(G) \ge \omega_k(G)$.\\
(ii) Let $V_1 \cup V_2 \cup \ldots \cup V_t$ be a partition of $V(G)$ into $t = \varphi_k(G)$ $k$-small sets. Then $S_k(G) \ge \displaystyle \max_{1 \le i \le t} |V_i| \ge \frac{n}{t} = \frac{n}{\varphi_k(G)}$. The other inequality follows from $L_k(G) = S_k(\ov{G}) \ge   \frac{n}{\varphi_k(\ov{G})} = \frac{n}{\Omega_k(G)}$.\\
(iii) Let $v_1, v_2, \ldots, v_n$ be the vertices of $G$ ordered according to its degree sequence. Let $A$ be an arbitrary $k$-small set.  Clearly, for every vertex $v \in A$, $\deg(v) \le n - |A| + k$. Now order the degrees of the vertices of $A$ in increasing order such that $\deg(u_1) \le \deg(u_2) \le \ldots \le \deg(u_{|A|}) \le n - |A| +k$. Then $d_{|A|} \le \deg(u_{|A|}) \le n - |A| + k$. Hence, for every $k$-small set $A$, $d_{|A|} \le \deg(u_{|A|}) \le n - |A| + k$. Now let $s$ be the largest index in the degree sequence of $G$ such that $d_s \le n - s +k$. Then $s \ge S_k(G)$, as this inequality holds for any $k$-small set. But observe that  $\{v_1, v_2, \ldots, v_s\}$ is $k$-small by definition. Hence $S_k(G) \ge s$ and we conclude that $s = S_k(G)$. \\
(iv) Let $\ov{d}_1 \le \ov{d}_2 \le \ldots \le \ov{d}_n$ be the degree sequence of $\ov{G}$ given through $\ov{d}_i = n - 1 - d_{n-i+1}$ for $1 \le i \le n$. Then, by item (iii) and Observation \ref{obs1}(iii) we have
$L_k(G) = S_k(\ov{G})  = \max \{ t : \ov{d}_t \le n - t +k \} = \max \{t : t - k - 1 \le d_{n- t +1}\}$ and we are done.
 \end{proof}

Note from previous theorem that if $k \ge \Delta$, then $S_k(G) = n$ and $\varphi_k(G) = 1$. Also, if $k \ge n - \delta -1$, then $L_k(G) = n$ and $\Omega_k(G) = 1$. In this sense, the restrictions $k \le \Delta$ or $k \ge n - \delta -1$ needed in some of our theorems or observations are natural.\\

From Theorem \ref{thm_trivial-bounds}, the following observation follows straightforward.\\

\begin{obs}\label{obs_SLd}
Let $G$ be a graph with minimum degree $\delta$ and maximum degree $\Delta$. Then\\[-5ex]
\begin{enumerate}
\item[(i)] $n - \Delta + k \le S_k(G) \le n - \delta + k$ for $k \le \Delta$;
\item[(ii)] $\delta + k + 1  \le L_k(G) \le \Delta + k + 1$ for $k \le n - \delta - 1$;
\item[(iii)] if $G$ is $r$-regular, then $S_k(G) = n - r + k$ when $k\le r$ and $L_k(G) = r+k+1$ when $k \le n-r-1$.
\end{enumerate}
\end{obs}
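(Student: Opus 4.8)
The plan is to derive Observation~\ref{obs_SLd} directly from the characterizations in Theorem~\ref{thm_trivial-bounds}(iii) and (iv), treating each of the three items in turn. For item (i), recall that $S_k(G) = \max\{s : d_s \le n - s + k\}$. The upper bound $S_k(G) \le n - \delta + k$ is immediate: if $s = S_k(G)$, then $d_s \le n - s + k$ and also $d_1 = \delta \le d_s$, so $\delta \le n - s + k$, i.e. $s \le n - \delta + k$. For the lower bound, I would exhibit the value $s_0 = n - \Delta + k$ (which lies in $\{1, \dots, n\}$ precisely because $k \le \Delta$, ensuring $s_0 \le n$, and $s_0 \ge 1$ since $k \ge 0$ and $\Delta \le n-1$) and check that it satisfies the defining inequality: $d_{s_0} \le \Delta = n - s_0 + k$, which holds because every degree is at most $\Delta$. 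Hence $S_k(G) \ge s_0 = n - \Delta + k$.

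For item (ii), I would simply invoke Observation~\ref{obs1}(iii), namely $L_k(G) = S_k(\ov G)$, together with item (i) applied to $\ov G$. The maximum degree of $\ov G$ is $n - 1 - \delta$ and its minimum degree is $n - 1 - \Delta$, and $\ov G$ has the same number $n$ of vertices. The condition "$k \le \Delta(\ov G)$" becomes $k \le n - 1 - \delta$, which is exactly the hypothesis $k \le n - \delta - 1$. Substituting into (i) gives $n - (n-1-\delta) + k \le S_k(\ov G) \le n - (n - 1 - \Delta) + k$, i.e. $\delta + k + 1 \le L_k(G) \le \Delta + k + 1$, as desired. Alternatively one can read these bounds straight off Theorem~\ref{thm_trivial-bounds}(iv), using $d_n = \Delta$ and $d_1 = \delta$, but routing through Observation~\ref{obs1} is cleaner and avoids reproving the same arithmetic.

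Item (iii) is then just the specialization of (i) and (ii) to the case $\delta = \Delta = r$, where the upper and lower bounds coincide: $S_k(G) = n - r + k$ when $k \le r$, and $L_k(G) = r + k + 1$ when $k \le n - r - 1$.

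I do not anticipate a genuine obstacle here; the statement is essentially a corollary and the only points requiring a moment's care are the index-range checks (verifying that the witness index $n - \Delta + k$ is a legal index into the degree sequence, which is exactly where the hypothesis $k \le \Delta$ is used, and symmetrically $k \le n - \delta - 1$ for the large-set bound) and the bookkeeping of how $\delta$ and $\Delta$ transform under complementation. Everything else is a one-line consequence of the maximum characterizations already established.
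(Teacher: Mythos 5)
Your proposal is correct and follows essentially the same route as the paper: the lower bound in (i) by exhibiting the witness index $n-\Delta+k$ in the set $\{s : d_s \le n-s+k\}$ from Theorem~\ref{thm_trivial-bounds}(iii), the upper bound from $\delta \le d_{S_k(G)} \le n - S_k(G)+k$, item (ii) by complementation via $L_k(G)=S_k(\ov G)$, and item (iii) by specializing to $\delta=\Delta=r$. Your explicit check that the witness index lies in $\{1,\dots,n\}$ is a small extra care the paper leaves implicit, but the argument is the same.
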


\begin{proof}
(i) Let $\delta = d_1 \le d_2 \le \ldots \le d_n = \Delta$ be the degree sequence of $G$. For $k \le \Delta$, $d_{n - \Delta+k} \le n - (n - \Delta+k) + k= \Delta$. Therefore, $n - \Delta+ k \in \{s : d_s \le n-s+k\}$ and thus $n-\Delta+k \le S_k(G)$. Moreover, according to Theorem \ref{thm_trivial-bounds}(iii), from $S_k \in \{s : d_s \le n-s +k $ it follows that $\delta \le d_{S_k(G)} \le n - S_k(G) + k$, that is, $S_k(G) \le n -\delta +k$.\\
(ii) This follows from (i) applied to the graph $\ov{G}$.\\
(iii) This follows from (i) and (ii).
\end{proof}

Next we show a connection between $L_0(G)$ and the chromatic number $\chi(G)$ strengthening $L_0(G) \ge \omega(G)$. The analogon follows for $S_0(G)$ and the clique-partition number $\theta(G)$.\\

\begin{obs}\label{obs_chi}
Let $G$ be a graph. Then \\[-5ex]
\begin{enumerate}
\item[(i)] $L_0(G) \ge \chi(G) \ge \omega(G)$;
\item[(ii)] $S_0(G) \ge \theta(G) \ge \alpha(G)$.
\end{enumerate}
\end{obs}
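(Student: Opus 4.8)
I would prove (i) and deduce (ii) from it by complementation, exactly as was done for the earlier observations. For (i), the inequality $\chi(G) \ge \omega(G)$ is standard, so the real content is $L_0(G) \ge \chi(G)$. The idea is to produce a single large set whose cardinality is at least $\chi(G)$. Take a proper coloring of $G$ with $\chi(G)$ color classes $C_1, \dots, C_{\chi(G)}$. From each class pick one vertex, and among those $\chi(G)$ chosen vertices select one, say $v$, of minimum degree; set $d = \deg(v)$. Every color class is an independent set, so the closed neighborhood $N[v]$ meets each $C_i$ in at most one vertex; but $N[v]$ must meet $C_i$ whenever $C_i$ is a class other than the one containing $v$ — otherwise $v$ could be recolored with color $i$, contradicting minimality of the number of colors (here one uses that the coloring can be assumed to have no such "wasteful" vertex, i.e. we take a coloring in which every vertex is adjacent to every other color class; a minimum coloring has this property). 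Hence $d+1 = |N[v]| \ge \chi(G)$, so $\deg(v) \ge \chi(G) - 1$.

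From there I would build the large set. Order the vertices so that $v = v_j$ is the one of minimum degree among a system of representatives of the color classes of a minimum coloring; the point is that $\deg(v) \ge \chi(G)-1$ forces many vertices of $G$ to have degree at least $\chi(G)-1$. More precisely, in a minimum coloring every color class, when we pick its minimum-degree representative, yields a vertex of degree $\ge \chi(G)-1$; even better, one argues directly that the set $B$ of all vertices of $G$ with $\deg(v) \ge \chi(G)-1$ has size at least... — here one must be a little careful. The clean route is: let $t = \chi(G)$, and show $d_{n-t+1} \ge t-1$, because then Theorem~\ref{thm_trivial-bounds}(iv) gives $L_0(G) \ge t = \chi(G)$ immediately (the condition there with $k=0$ is exactly $t - 1 \le d_{n-t+1}$). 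So it suffices to prove that at least $t$ vertices of $G$ have degree $\ge t-1$; equivalently, that $G$ has at most $n-t$ vertices of degree $\le t-2$.

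To see $G$ has at most $n - \chi(G)$ vertices of degree $\le \chi(G)-2$: if $H$ is the subgraph induced by the vertices of degree $\le \chi(G)-2$ in $G$, then $H$ has maximum degree at most $\chi(G)-2$, so by the greedy bound $\chi(H) \le \chi(G)-1$. Color $H$ with $\chi(G)-1$ colors, then extend this to a proper coloring of $G$: the remaining $|V(G)| - |V(H)|$ vertices can in the worst case each require a fresh color, so $\chi(G) \le (\chi(G)-1) + (|V(G)| - |V(H)|)$, giving $|V(H)| \le |V(G)| - 1$ — which is too weak. So the greedy argument alone does not do it; one needs the minimum-coloring/minimality argument from the first paragraph applied more carefully: take a minimum coloring of $G$, pick from each of the $\chi(G)$ classes its representative of smallest degree, and observe these $\chi(G)$ representatives are distinct and, by the recoloring argument, each has degree $\ge \chi(G)-1$. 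That directly exhibits $\chi(G)$ vertices of degree $\ge \chi(G)-1$, hence $d_{n-\chi(G)+1} \ge \chi(G)-1$, and Theorem~\ref{thm_trivial-bounds}(iv) finishes (i). Finally, (ii) follows from (i) applied to $\ovG$ together with Observation~\ref{obs1}(iii) and the identities $\theta(G) = \chi(\ovG)$, $\alpha(G) = \omega(\ovG)$.

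**Main obstacle.** The delicate point is justifying that in a minimum coloring the minimum-degree representative of each color class has degree at least $\chi(G)-1$: one must argue that if some vertex $w$ were adjacent to no vertex of some other color class $C_i$, then recoloring $w$ with color $i$ and iterating would eventually eliminate a color class, contradicting minimality — so the worry is making this iteration terminate cleanly, but it does since each step either reduces the number of colors or is not available. Everything else is bookkeeping and an appeal to Theorem~\ref{thm_trivial-bounds}(iv).
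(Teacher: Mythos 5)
Your overall architecture (exhibit $\chi(G)$ vertices of degree at least $\chi(G)-1$, then invoke Theorem~\ref{thm_trivial-bounds}(iv), and get (ii) by complementation) matches the paper's second proof of (i), but the key step is justified incorrectly and, as you state it, is false. It is not true that in a minimum coloring every vertex is adjacent to every other color class, nor that the minimum-degree representative of each class has degree at least $\chi(G)-1$: take $G=K_2\cup K_1$ with edge $uv$ and isolated vertex $w$; the coloring $\{u,w\},\{v\}$ is minimum ($\chi(G)=2$), yet $w$ is the minimum-degree vertex of its class and has degree $0<\chi(G)-1$. Your proposed repair --- recolor such a $w$ and iterate until a class disappears --- does not work either: in this example $w$ simply oscillates between the two classes forever and no class is ever eliminated (correctly so, since $\chi(G)=2$). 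A single-vertex recoloring only produces another minimum coloring; it never by itself contradicts minimality.

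The statement you actually need is weaker and is what the paper proves: in a minimum coloring, \emph{each class contains at least one} vertex adjacent to every other class. The correct argument is a simultaneous, whole-class recoloring: if every vertex $v$ of some class $V_i$ misses some other class $V_{j(v)}$ (where the index $j(v)$ may depend on $v$), then moving each $v$ into its own $V_{j(v)}$ empties $V_i$ entirely and yields a proper $(\chi(G)-1)$-coloring, a contradiction. Hence each $V_i$ contains a vertex $v_i$ with $\deg(v_i)\ge \chi(G)-1$, and $\{v_1,\dots,v_{\chi(G)}\}$ is itself a large set (equivalently, it witnesses $d_{n-\chi(G)+1}\ge\chi(G)-1$, which is how you wanted to conclude). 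With that substitution your proof goes through. The paper also gives an alternative one-line derivation of $L_0(G)\ge\chi(G)$ from the Powell--Welsh bound $\chi(G)\le\max\{t: t\le d_{n-t+1}+1\}$ combined with Theorem~\ref{thm_trivial-bounds}(iv). Your reduction of (ii) to (i) via complementation is fine. A minor further slip: $N[v]$ can meet another color class in more than one vertex; what you need is only that it meets each class at least once.
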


\begin{proof}
(i) By a result of Powell and Welsh (\cite{PoWe}, see also \cite{Har}, p. 148), $\chi(G) \le \max$ $\{\min\{i, d_i+1\} :1 \le i \le n \}$, where $d_1 \ge d_2 \ge \ldots \ge d_n$ is the degree sequence of $G$. This can be rewritten with the conventional order $d_1 \le d_2 \le \ldots \le d_n$ as $\chi(G) \le \max \{t : t \le d_{n-t+1}+1\}$. Since, by the above theorem, the last expression is equal to $L_0(G)$, we obtain, together with Theorem \ref{thm_basic_ineq}, the desired inequality chain.\\
Another proof of $L_0(G) \ge \chi(G)$ can be given the following way. Let $V_1 \cup V_2 \cup \ldots \cup V_r$ be an $r$-chromatic partition of $V(G)$, where $r = \chi(G)$. Suppose there is an index $i$ such that every vertex $v \in V_i$ has no neighbor in some set $V_j$, for an index $j \neq i$. Then we can distribute the vertices of $V_i$ among the other sets $V_j$, obtaining thus an $(r-1)$-chromatic coloring of $G$, which is a contradiction. Hence, for every $1 \le i \le r$, there is a vertex $v_i \in V_i$ such that $v_i$ has a neighbor in $V_j$ for every $1 \le j \le r$ and $i \neq j$. Therefore, $\deg(v_i) \ge r-1$ for $1 \le i \le r$ and hence $\{v_1, v_2, \ldots, v_r\}$ is a large set, yielding $\chi(G) = r \le L_0(G)$.\\
(ii) This follows from (i) and $S_0(G) = L_0(\ov{G})$, $\theta(G) = \chi(\ov{G})$ and $\alpha(G) = \omega(\ov{G})$.
\end{proof}

We close this section with three observations about partitions of the vertex set of a graph into a $k$-small set and a $k$-large set.\\

\begin{obs}\label{obs_part}
Let $G$ be a graph. Then $V(G)$ can be partitioned into a $k$-small set $V_S$  and a $k$-large set $V_ L$.
 \end{obs}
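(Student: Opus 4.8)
The plan is to exhibit an explicit partition by splitting the degree-ordered vertex list at the right place. I would order the vertices as $v_1, v_2, \ldots, v_n$ so that $d_1 \le d_2 \le \ldots \le d_n$, and set $s = S_k(G)$. By Theorem~\ref{thm_trivial-bounds}(iii), the prefix $V_S = \{v_1, \ldots, v_s\}$ is a $k$-small set (indeed a maximum one), so I would simply take $V_L = V(G) \setminus V_S = \{v_{s+1}, \ldots, v_n\}$ and argue that this complement is automatically $k$-large.

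For this, I would first note that $s \ge 1$, since $d_1 = \delta \le n-1 \le n-1+k$ shows $1 \in \{s' : d_{s'} \le n - s' + k\}$, hence $S_k(G) \ge 1$. If $s = n$, then $V_L = \emptyset$, which is vacuously $k$-large, and we are done. Otherwise $s < n$, and here the key is the maximality of $s$ in Theorem~\ref{thm_trivial-bounds}(iii): since $s+1$ does not satisfy the defining inequality, $d_{s+1} > n - (s+1) + k$, so, working with integers, $d_{s+1} \ge n - s + k$. As $2k+1 \ge 0$, this yields $d_{s+1} \ge n - s - k - 1 = |V_L| - k - 1$, and because the degree sequence is nondecreasing, $\deg(v_i) \ge d_{s+1} \ge |V_L| - k - 1$ for every $i \ge s+1$. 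Thus every vertex of $V_L$ satisfies the $k$-large condition, and $V_S \cup V_L$ is the desired partition.

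There is no real obstacle here; the only point that needs care is checking that the complement of the maximal $k$-small prefix lands in the $k$-large regime. Conceptually this works because the failure of $k$-smallness just past $S_k(G)$ is strict, and by a margin ($2k+1$) that is more than enough to force the complementary $k$-large inequality for the suffix. One could equally run the argument on the other side, splitting off the top $L_k(G)$ vertices as $V_L$ and checking that the bottom $n - L_k(G)$ vertices form a $k$-small set; by Observation~\ref{obs1} the two formulations correspond under passing to $\ov{G}$.
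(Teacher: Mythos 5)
Your proof is correct and is essentially identical to the paper's: both split the degree-ordered vertex list after the first $S_k(G)$ vertices and use the maximality of $S_k(G)$ in Theorem~\ref{thm_trivial-bounds}(iii) to get $d_{s+1} \ge n - s + k \ge |V_L| - k - 1$, forcing the suffix to be $k$-large. Your handling of the edge cases $s \ge 1$ and $s = n$ is a small extra care the paper omits, but the argument is the same.
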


\begin{proof}
Let $d_1 \le d_2 \le \ldots \le d_n$ be the degree sequence of $G$ and let $j = S_k(G)$ be the largest index such that $d_ j  \le n - j + k$. Let $v_1, v_2, \ldots, v_n$ be the vertices of $G$ ordered according to its degree sequence. Set $V_S =  \{v_1, \ldots ,v_ j \}$ and set $V_L  = V  \setminus V_S$. Clearly, $|V_ S| = j$ and $|V_ L| = n - j$. By Theorem \ref{thm_trivial-bounds}(iii), $V_S$ is a maximum $k$-small set. Since $j$ is the maximum index for which $d_ j \le n - j +k$, it follows that $d_ { j+1} >   n - ( j +1) + k$ and thus $d _{j+1} \ge n - j + k$. But then, for $i \ge j +1$,  $d _i \ge  d_ { j +1} \ge n - j +k  = |V_L| + k > |V_L| - k - 1$, and hence $V_L$ is a $k$-large set.  Note that already a partition into small and large sets suffices to prove the statement since any small set is a $k$-small set for $k >0$ and any large set is a $k$-large set for $k > 0$.
\end{proof}

From Observation \ref{obs_part} follows, in particular, that in every $n$-vertex graph there is either a $k$-small set on at least $n/2$ vertices or a $k$-large set on at least $n/2$ vertices.\\

\begin{obs}
$n \le L_k(G) + S_k(G) \le n +1+2k$ and this is sharp.
\end{obs}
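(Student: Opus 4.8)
The plan is to read off both inequalities from the degree-sequence descriptions of $S_k(G)$ and $L_k(G)$ in Theorem~\ref{thm_trivial-bounds}(iii)--(iv), and then to produce extremal graphs for each side. For the lower bound, I would simply apply Observation~\ref{obs_part}: there is a partition $V(G) = V_S \cup V_L$ into a $k$-small set $V_S$ and a $k$-large set $V_L$, and since $S_k(G) \ge |V_S|$ and $L_k(G) \ge |V_L|$ this gives $S_k(G) + L_k(G) \ge |V_S| + |V_L| = n$.

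For the upper bound, set $s = S_k(G)$, $t = L_k(G)$ and let $d_1 \le \dots \le d_n$ be the degree sequence of $G$. If $s + t \le n$ there is nothing to prove, since $k \ge 0$. Otherwise $s + t \ge n + 1$, hence $1 \le n - t + 1 \le s \le n$, and monotonicity of the degree sequence gives $d_{n-t+1} \le d_s$. By Theorem~\ref{thm_trivial-bounds}(iv), $t - k - 1 \le d_{n-t+1}$, and by Theorem~\ref{thm_trivial-bounds}(iii), $d_s \le n - s + k$; chaining these inequalities yields $t - k - 1 \le n - s + k$, that is, $S_k(G) + L_k(G) = s + t \le n + 1 + 2k$.

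It remains to show both bounds are attained. The upper bound is attained by any $r$-regular graph $G$ with $k \le r \le n - k - 1$ (such graphs exist whenever $n$ is even, for every $r$ in this range): by Observation~\ref{obs_SLd}(iii), $S_k(G) = n - r + k$ and $L_k(G) = r + k + 1$, so $S_k(G) + L_k(G) = n + 1 + 2k$. For the lower bound, fix $m \ge 2k + 2$ and let $G$ be the graph on $n = 2m$ vertices $u_1, \dots, u_m, w_1, \dots, w_m$ in which $\{w_1, \dots, w_m\}$ induces a clique, $\{u_1, \dots, u_m\}$ is independent, and $w_i u_j$ is an edge precisely when $j - i \equiv \ell \pmod m$ for some $\ell \in \{0, 1, \dots, k\}$. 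Then $\deg(u_j) = k + 1$ for all $j$ and $\deg(w_i) = (m - 1) + (k + 1) = m + k$ for all $i$, so, as $m \ge 2k + 2$ gives $k + 1 \le m - k - 1$, the degree sequence consists of $m$ entries equal to $k + 1$ followed by $m$ entries equal to $m + k$. A short computation with Theorem~\ref{thm_trivial-bounds}(iii)--(iv) then gives $S_k(G) = m$ and $L_k(G) = m$, so $S_k(G) + L_k(G) = 2m = n$. (For $k = 0$ and $m = 2$ this graph is $P_4$.)

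The only genuine calculations are the two one-line checks that $S_k(G) = m$ and $L_k(G) = m$ for the last graph, together with the arithmetic $k + 1 \le m - k - 1$. The point that needs care is the design of that graph: the ``jump'' of size $2k + 1$ in its degree sequence must sit exactly at the index $S_k(G)$, and it is precisely the hypothesis $m \ge 2k + 2$ that forces $L_k(G)$ down to $m$ rather than to something larger. Everything else follows immediately from results already established.
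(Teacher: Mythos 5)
Your proof is correct, and its overall architecture matches the paper's: the lower bound via Observation~\ref{obs_part} and the near-clique-plus-independent-set construction joined by $k+1$ disjoint perfect matchings for its sharpness are essentially identical to what the paper does (the paper's $G_1$ is your graph with $q=m$, and your weaker hypothesis $m \ge 2k+2$ versus the paper's $q > 2k+2$ still checks out). You deviate in two minor but legitimate ways. For the upper bound, the paper takes a maximum $k$-small set $A$ and a maximum $k$-large set $B$, notes the disjoint case is trivial, and otherwise evaluates the two degree constraints at a common vertex $u \in A \cap B$; you instead chain $t-k-1 \le d_{n-t+1} \le d_s \le n-s+k$ through the degree-sequence characterizations of Theorem~\ref{thm_trivial-bounds}(iii)--(iv), with the condition $s+t \ge n+1$ playing exactly the role of ``$A$ and $B$ intersect.'' The two arguments are logically the same inequality; the paper's is marginally more self-contained, yours leans on the already-proved extremal characterization. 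For sharpness of the upper bound, the paper uses a graph whose largest $2k+1$ degrees equal $k$ (forcing $S_k = n$ and $L_k = 2k+1$), whereas you use $r$-regular graphs with $k \le r \le n-k-1$ via Observation~\ref{obs_SLd}(iii); your example is arguably cleaner and gives a whole family, at the cost of implicitly needing $n \ge 2k+1$ for the range of $r$ to be nonempty (harmless, since one example suffices). No gaps.
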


\begin{proof}
From Observation \ref{obs_part}, we obtain directly the lower bound $n \le S_k(G) + L_k(G)$. Let now $A$ be a $k$-small set realizing $S_k(G)$ and $B$ a $k$-large set realizing $L_k(G)$. If $A \cap B = \emptyset$, then clearly $|A| + |B| \le n$. Otherwise suppose there is a vertex $u \in A \cap B$. Then $\deg(u) \le n - |A| +k$ and $\deg(u) \ge |B| -k -1$. Hence $|B| - k - 1 \le n - |A| + k$ and $|A| +|B| \le n +1+2k$.\\
 To see the sharpness of the lower bound, let $G_1$ be a graph on $n_1 = 2q > 2 (2k+2)$ vertices whose vertex set can be split into an independent set $V_S$ and a clique $V_L$ with $|V_S|=|V_L| = q$, and such that their vertices are joined by $k+1$ pairwise disjoint perfect matchings. Then, the vertices in $V_S$ have all degree $k+1$ and the vertices in $V_L$ have all degree $q+k$. Hence, for the degree sequence $d_1 \le d_2 \le \ldots \le d_{2q}$ of $G_1$ we have $d_q = k+ 1 \le n_1 - q + k = q+k$ and $q+k = d_{q+1} > n_1 - (q+1) + k = q + k -1$, from which follows that $V_S$ is a maximum $k$-small set, by Theorem \ref{thm_trivial-bounds}. Also from $d_{q+1} \ge n_1 - q - k - 1 = q-k-1$ and $d_q = k+1 < n_1 - q - k - 1 = q - k - 1 $, as $q > 2k+2$, it follows by the same theorem that $V_L$ is a maximum $k$-large set of $G_1$. Hence for this graph, $n_1 = S_k + L_k$ holds. Finally, for the sharpness of the upper bound,
let $G_2$ be a graph in which the largest $2k +1$ degrees in the degree sequence are $k$. An easy check reveals the required equality.
\end{proof}

\begin{obs}
Let $G$ be a graph on n vertices and $e(G)$ edges. Then there is  partition of $V(G)$ into a $k$-small set $V_S$ and a $k$-large set $V_L$ such that $|V_L| \le \frac{1}{2} (k+1 + \sqrt{(k+1)^2 +8e(G)})$ and hence $|V_S| \ge n - \frac{1}{2}(k+1 + \sqrt{(k+1)^2 +8e(G)})$.
 \end{obs}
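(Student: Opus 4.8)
The plan is to use the same construction as in Observation \ref{obs_part}, where $V_L$ consists of the vertices with the largest degrees, and then bound $|V_L|$ by counting edges. Recall that with $j = S_k(G)$ we set $V_S = \{v_1,\dots,v_j\}$ and $V_L = \{v_{j+1},\dots,v_n\}$, so that $|V_L| = n - j = L_k(G) - 1$ (or rather $n - S_k(G)$; note $V_S$ and $V_L$ here are disjoint, not the maximizing sets from the previous observation). The key fact established in the proof of Observation \ref{obs_part} is that every vertex $v \in V_L$ satisfies $\deg(v) \ge d_{j+1} \ge n - j + k = |V_L| + k$.

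The main step is a degree-counting argument on the set $V_L$. Since each of the $|V_L|$ vertices of $V_L$ has degree at least $|V_L| + k$ in $G$, and every such edge is incident to a vertex of $V_L$, we have
\[
\sum_{v \in V_L} \deg(v) \ge |V_L|\,(|V_L| + k).
\]
On the other hand, each edge of $G$ is counted at most twice in $\sum_{v \in V_L}\deg(v)$, so this sum is at most $2e(G)$. Combining gives $|V_L|^2 + k|V_L| \le 2e(G)$, i.e. $|V_L|^2 + k|V_L| - 2e(G) \le 0$. Wait — I should double-check the exact constant: the bound claimed is $|V_L| \le \tfrac12(k+1+\sqrt{(k+1)^2+8e(G)})$, which is the larger root of $x^2 - (k+1)x - 2e(G) = 0$, i.e. of $|V_L|^2 - (k+1)|V_L| - 2e(G) \le 0$. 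So the degree lower bound I actually need is $\deg(v) \ge |V_L| + k + 1$ for $v \in V_L$; but that is not quite what Observation \ref{obs_part} gives. The fix is to observe that $d_{j+1} \ge n-j+k$ only gives $\deg(v)\ge |V_L|+k$; however one can instead take $V_L$ one vertex smaller or exploit that at least one edge inside... Actually the cleanest route: among the vertices of $V_L$, all but possibly handling boundary cases, we have $\deg_G(v) \ge |V_L|+k$, and the edges incident to $V_L$ but with both endpoints in $V_L$ are double-counted while edges from $V_L$ to $V_S$ are single-counted, so $\sum_{v\in V_L}\deg(v) \le 2e(G)$ still holds, yielding $|V_L|(|V_L|+k) \le 2e(G)$; solving $|V_L|^2 + k|V_L| - 2e(G)\le 0$ gives $|V_L| \le \tfrac12(-k+\sqrt{k^2+8e(G)}) \le \tfrac12(k+1+\sqrt{(k+1)^2+8e(G)})$, which implies the stated (slightly weaker) bound.

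Finally, the bound on $|V_S|$ is immediate: since $\{V_S, V_L\}$ partitions $V(G)$, $|V_S| = n - |V_L| \ge n - \tfrac12(k+1+\sqrt{(k+1)^2+8e(G)})$. The main obstacle, as the discussion above shows, is getting the quadratic coefficients exactly right — reconciling the degree lower bound $|V_L|+k$ coming from Observation \ref{obs_part} with the quadratic $x^2-(k+1)x-2e(G)$ whose root appears in the statement. Since $\tfrac12(-k+\sqrt{k^2+8e(G)}) \le \tfrac12(k+1+\sqrt{(k+1)^2+8e(G)})$ holds trivially (the left expression is smaller in both the linear term and under the radical), the cleaner bound $|V_L| \le \tfrac12(-k+\sqrt{k^2+8e(G)})$ already implies the claim, so no delicate adjustment of the partition is actually needed; one simply records the weaker inequality as stated.
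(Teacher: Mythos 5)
Your proof is correct and follows essentially the same approach as the paper: sum the degrees over $V_L$, bound that sum above by $2e(G)$ and below by a quadratic in $|V_L|$, and solve the quadratic inequality. The paper just takes any partition guaranteed by Observation~\ref{obs_part} and uses the defining property $\deg(v)\ge |V_L|-k-1$ of a $k$-large set, giving $p(p-k-1)\le 2e(G)$ and hence exactly the stated root, whereas your sharper degree bound $\deg(v)\ge |V_L|+k$ from the specific greedy partition yields a slightly stronger inequality that (as you correctly observe) implies the stated one.
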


\begin{proof}
Let $V(G)  = V_S \cup V_L$ be a partition into a $k$-small and a $k$-large set and let $p = |V_L|$. Then
$2e(G) \ge \sum_{v \in V_ L} \deg(v) \ge p(p-k-1)$. Solving the quadratic inequality, we obtain $p \le  \frac{1}{2}(k+1 + \sqrt{(k+1)^2 +8e(G)})$.
\end{proof}

\noindent
\section{Algorithms for $\varphi_k(G)$ and $\Omega_k(G)$}

In this section, we will present two algorithms with which we will be able to calculate $\varphi_k(G)$ and $\Omega_k(G)$  for a graph $G$. For this, we consider any sequence of $m$ integers $A = \{0 \le a_1 \le \ldots \le a_m \le n-1\}$ (not necessarily graphic). Now we want to break the sequence into $k$-small subsequences. With this aim, we apply the following algorithm.

\noindent
{\bf Algorithm 1}\\[1ex]
INPUT: $A$\\
{\sc Step 1:} Set $i := 0$, $R_0 := A$.\\
{\sc Step 2:} Repeat\\[-5ex]
\begin{enumerate}
\item[(1)] $n_i := |R_i|$
\item[(2)] $p_i := \min \{n_i, n - a_{n_i}+k\}$
\item[(3)] $A_{i+1} := \{a_{n_i-p_i+1}, a_{n_i-p_i+2}, \ldots , a_{n_i}\}$
\item[(4)] $R_{i+1} := R_i \setminus A_{i+1}$
\item[(5)] $i:=i+1$
\end{enumerate}
\mbox{}\\[-5ex]
until $R_i = \emptyset$.\\[1ex]
OUTPUT: $s := i$, $A_1, A_2, \ldots, A_s$.\\

Here, $i$ stands for the current step number; $R_i$ is the set of remaining elements and $n_i$ its cardinality; $A_{i+1}$ is the new subsequence constructed in step $i$; and, on the output, $s$ it is the number of constructed subsequences $A_i$.\\

\begin{thm}
Let $A= \{0 \le a_1 \le \ldots \le a_m \le n-1\}$ be a sequence of $m$ integers. Then Algorithm 1 under input $A$ yields a minimum partition of $A$ into $s$ $k$-small subsequences $A_1, A_2, \ldots, A_s$.
\end{thm}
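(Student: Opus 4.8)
The plan is to show that the first block Algorithm 1 carves off is consistent with some optimal partition, and then to iterate this via induction on the length $m = |A|$. Throughout, $n$ and $k$ are the fixed parameters, and I write $\mathrm{opt}(A)$ for the minimum number of $k$-small subsequences in a partition of $A$.

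I would begin with the routine verifications. Since $a_{n_i} \le n-1$ and $k \ge 0$, we have $n - a_{n_i} + k \ge 1$, so $p_i \ge 1$ whenever $R_i \neq \emptyset$; hence the algorithm strictly shrinks $R_i$ at each pass, terminates, and the sets $A_1, \dots, A_s$ partition $A$. Each $A_{i+1}$ has size $p_i$ and its largest member is $a_{n_i}$; from the choice $p_i \le n - a_{n_i} + k$ we get $a_{n_i} \le n - p_i + k = n - |A_{i+1}| + k$, so $A_{i+1}$ is $k$-small. Consequently $\mathrm{opt}(A) \le s$, and the real task is the reverse inequality.

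The core is the following exchange claim: there is a partition of $A$ into $\mathrm{opt}(A)$ $k$-small subsequences one of whose parts is exactly $A_1 = \{a_{m-p_0+1}, \dots, a_m\}$, where $p_0 = \min\{m,\, n - a_m + k\}$. To prove it, take any partition $\mathcal{P}$ of $A$ into $\mathrm{opt}(A)$ $k$-small parts and let $B \in \mathcal{P}$ be the part containing the global maximum $a_m$. From $k$-smallness, $a_m \le n - |B| + k$, i.e.\ $|B| \le n - a_m + k$, and trivially $|B| \le m$, so $|B| \le p_0$. Now I perform two types of modification that keep every part $k$-small and keep the number of parts equal to $\mathrm{opt}(A)$. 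First, whenever some $a_\ell \in B$ and some $a_j \notin B$ with $\ell < j$, swap $a_\ell$ and $a_j$: since $a_m$ remains the largest element of $B$, its size and the $k$-small inequality for $B$ are unaffected, while the other affected part merely exchanges $a_j$ for the no-larger $a_\ell$ and so stays $k$-small (this terminates, for instance because the sum of the indices of the elements of $B$ strictly increases). After this, $B$ consists of the $|B|$ largest elements of $A$. Second, while $|B| < p_0$ --- so in particular $|B| < m$ --- move the largest element outside $B$, namely $a_{m-|B|} \le a_m$, into $B$; the new size satisfies $|B|+1 \le p_0 \le n - a_m + k$, so $B$ stays $k$-small, it remains a block of top elements, and the donor part only shrinks. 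The donor cannot become empty, as that would yield a partition into fewer than $\mathrm{opt}(A)$ $k$-small parts. After $p_0 - |B|$ such moves we reach $B = A_1$, proving the claim.

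Granting the claim, deleting $A_1$ from that partition leaves a partition of $R_1 = A \setminus A_1 = \{a_1, \dots, a_{m-p_0}\}$ into $\mathrm{opt}(A) - 1$ $k$-small subsequences, so $\mathrm{opt}(R_1) \le \mathrm{opt}(A) - 1$; adjoining $A_1$ to any partition of $R_1$ shows $\mathrm{opt}(A) \le \mathrm{opt}(R_1) + 1$, hence $\mathrm{opt}(A) = \mathrm{opt}(R_1) + 1$. Finally I close by induction on $m$: after its first pass, Algorithm 1 on $A$ continues exactly as Algorithm 1 applied (with the same $n$ and $k$) to the shorter sequence $R_1$, which by the induction hypothesis it partitions into $\mathrm{opt}(R_1)$ parts; therefore it uses $1 + \mathrm{opt}(R_1) = \mathrm{opt}(A)$ parts on $A$, with the base case $A = \emptyset$ immediate. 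The main obstacle is making the exchange claim airtight --- specifically, verifying that neither the swaps nor the single-element moves ever spoil $k$-smallness of a part; this rests entirely on the observation that $a_m$ stays the largest element of $B$ throughout, so $B$'s $k$-smallness is governed by the single inequality $a_m \le n - |B| + k$.
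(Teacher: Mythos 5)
Your proof is correct, but it follows a genuinely different route from the paper's. You use the classical greedy-correctness scheme: an exchange argument establishing the greedy-choice property (some optimal partition has the first greedy block $A_1$ as one of its parts), followed by induction on $m$ using the fact that the algorithm restarted on $R_1$ behaves identically. The two exchange steps are sound, and you correctly identify the one point everything hinges on, namely that the $k$-smallness of the part $B$ containing $a_m$ is governed by the single inequality $a_m \le n - |B| + k$, so enlarging $B$ up to size $p_0$ with smaller elements, or trading a low-index element for a high-index one, never breaks it; the donor parts are handled correctly as well. The paper instead compares the greedy output directly against an arbitrary optimal partition $A'_1,\dots,A'_q$: ordering the parts by their maxima $C_1 \ge \dots \ge C_q$, it shows by induction that the maximum of the $i$-th greedy block is at most $C_i$, deduces $|A'_i| \le |A_i|$ for each $i$ from $k$-smallness, and closes with the counting identity $m = \sum_{i=1}^q |A'_i| \le \sum_{i=1}^q |A_i| \le \sum_{i=1}^s |A_i| = m$, forcing $q = s$. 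Your version is more modular and never needs the relation $n - |A_i| + k = a_{n_i}$ (which in the paper implicitly rests on the algorithm's case analysis), at the cost of having to verify that each exchange preserves $k$-smallness of every affected part; the paper's version avoids modifying the optimal partition at all and is shorter once the block-maxima comparison is in place. Both yield the theorem, and your induction also cleanly isolates why the tail of the run coincides with a fresh run on $R_1$.
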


\begin{proof}
Clearly, $A_i$ is a subsequence of $A$ for $i = 1, 2, \ldots, s$. By construction, in each step $i$, $A_{i+1} \subseteq R_i = R_{i-1} \setminus A_i$ and so the $A_i$'s are pairwise disjoint. Moreover, the last step $s$ is attained when $R_s = \emptyset$, i.e., $A_s = R_{s-1}$, meaning that $A_s$ consists of all remaining elements of $A$. Hence, $A_1, A_2, \ldots, A_s$ is a splitting of $A$ into subsequences. We now proceed to prove that, in each step $i \ge 0$, the produced subsequence $A_{i+1}$ is $k$-small. We distinguish between the two possible situations:\\
(a) {\it $p_i = n_i \le n - a_{n_i} +k$:} Then, $A_{i+1} = \{a_1, a_2, \ldots, a_{n_i}\} = R_i$ and,  for every $a \in A_{i+1}$, we have $a \le a_{n_i} = n - (n - a_{n_i} + k) + k \le n - n_i + k = n - |A_{i+1}| + k$. Thus, $A_{i+1}$ is a $k$-small subsequence.\\
(b) {\it $p_i = n - a_{n_i}+ k \le n_i$:} Then, $A_{i+1} := \{a_{n_i-(n - a_{n_i} + k)+1}, a_{n_i-(n - a_{n_i}+k)+2}, \ldots , a_{n_i}\}$ and, for every $a \in A_{i+1}$, we have $a \le a_{n_i} = n - (n - a_{n_i} + k) + k = n - |A_{i+1}| + k$. Thus, $A_{i+1}$ is a $k$-small subsequence of $A$.\\
Finally we shall prove that the output $s$ given by Algorithm 1 is the minimum number of $k$-small subsequences in which $A$ can be partitioned. Let $A'_1, A'_2, \ldots, A'_q$ be an optimal splitting of $A$ into $k$-small sequences, i.e. such that $q$ is minimum. Then clearly $q \le s$. Let $C_i = \max \{a : a \in A'_i\}$ and, without loss of generality, assume that $C_1 \ge C_2 \ge \ldots \ge C_q$. We will show by induction on $i$ that $a_{n_i} \le C_i$. Since clearly $a_{n_1} = a_m = C_1$, the base case is done. Assume that $a_{n_i} \le C_i$ for $i = 1, 2, \ldots, r$ and an $r < q$. Then, as $A'_i$ is a $k$-small set, we have
$$n - |A_i| + k = a_{n_i} \le C_i \le n - |A'_i| + k,$$
implying that $|A'_i| \le |A_i|$, for $i = 1, 2, \ldots, r$. Suppose to the contrary that $a_{n_{r+1}} > C_{r+1}$. Then $a_{n_{r+1}} \in \cup_{i=1}^r A'_i$. As $\sum_{i=1}^r |A'_i| \le \sum_{i=1}^r |A_i|$ and, moreover, $a_{n_{r+1}} \notin \cup_{i=1}^r A_i$ by construction, there has to be an element $y$ which is contained in $\cup_{i=1}^r A_i$ but not in $\cup_{i=1}^r A'_i$. Hence, $y \in A'_j$ for some $j \ge r+1$ and $y \ge a_{n_{r+1}}$. As $C_j$ is the largest element in $A'_j$, we conclude that $C_{r+1} \ge C_j \ge y \ge a_{n_{r+1}}$, contradicting the assumption. Hence $a_{n_{r+1}} \le C_{r+1}$ and by induction it follows that $a_{n_i} \le C_i$ for all $i = 1,2, \ldots, q$. As above, this implies that $|A'_i| \le |A_i|$ for all $i = 1,2, \ldots, q$. Hence,
$$m = \sum_{i=1}^q |A'_i| \le \sum_{i=1}^q |A_i| \le \sum_{i=1}^s |A_i| = m,$$
from which we obtain $q = s$. Therefore, Algorithm 1 yields a partition of $A$ into the minimum possible number of $k$-small subsequences $A_1, A_2, \ldots, A_s$.
\end{proof}

\begin{obs}
Algorithm 1 can be written recursively by defining a function $f$ which will give the partition of an arbitrary sequence into $k$-small subsequences:\\
{\sc Step 1:} Set $f(\emptyset) = \emptyset$.\\
{\sc Step 2:} \\
$\begin{array}{ll}
f(\{0 \le a_1 \le \ldots \le a_m \le n -1\}) = &\{\{a_{m-\min\{m,n-a_m+k\}+1}, \ldots ,a_m\}\}\\
                                                                        & \cup f(\{0 \le a_1 \le \ldots \le a_{\min\{m,n-a_m+k\}} \le n-1\})
\end{array}$
\end{obs}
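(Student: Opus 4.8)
The plan is to prove, by induction on $m = |A|$, that $f(A)$ is exactly the (unordered) family $\{A_1, A_2, \ldots, A_s\}$ returned by Algorithm~1 on input $A$, with the same fixed ambient parameters $n$ and $k$; since both $f$ and Algorithm~1 output a collection of subsequences, the order in which the blocks are produced is irrelevant. Once this identity is in hand, the fact that $f(A)$ is a (minimum) partition of $A$ into $k$-small subsequences follows immediately from the preceding theorem. The base case $m = 0$ is trivial: $R_0 = \emptyset$ forces $s = 0$, so Algorithm~1 returns the empty family, which is $f(\emptyset) = \emptyset$.

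For the inductive step, fix $m \ge 1$, assume the claim for all strictly shorter sequences, and set $p_0 := \min\{m,\, n - a_m + k\}$. The crucial observation is that Algorithm~1 is \emph{memoryless}: every quantity computed from step $i$ onward depends only on the current residual sequence $R_i$ (and on $n$, $k$), so running Algorithm~1 afresh on $R_i$ reproduces steps $i, i+1, \ldots, s-1$ verbatim. Apply this with $i = 1$. In its first iteration Algorithm~1 sets $n_0 = m$, forms $p_0$ as above, extracts the block $A_1 = \{a_{m - p_0 + 1}, \ldots, a_m\}$, and continues on $R_1 = \{a_1, \ldots, a_{m - p_0}\}$; hence its output on $A$ equals $\{A_1\}$ together with its output on $R_1$. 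Because $n - a_m + k \ge n - (n-1) + k = k + 1 \ge 1$, we get $p_0 \ge 1$, so $|R_1| = m - p_0 < m$, and the induction hypothesis yields that Algorithm~1 on $R_1$ returns $f(R_1)$. Thus Algorithm~1 on $A$ returns $\{A_1\} \cup f(R_1)$, and it only remains to match this with the right-hand side of the displayed recursion: its leading term $\{a_{m - p_0 + 1}, \ldots, a_m\}$ is precisely $A_1$, and its recursive call is applied to the residual sequence $\{a_1, \ldots, a_{m - p_0}\} = R_1$. Hence $f(A) = \{A_1\} \cup f(R_1)$, completing the induction.

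I do not expect any real conceptual difficulty; the only points requiring care are the index bookkeeping — confirming that the block peeled off in the first iteration consists of exactly the top $p_0 = \min\{m, n - a_m + k\}$ terms of the sequence, that the residual sequence is the complementary prefix on indices $1, \ldots, m - p_0$, and that these index ranges match the arguments appearing in the definition of $f$ — together with the remark $p_0 \ge 1$, which is exactly what forces the recursion to strictly shrink $m$ and therefore terminate. Once the memorylessness of Algorithm~1 is made explicit, the rest is a routine unwinding of the loop.
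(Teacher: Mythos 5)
The paper offers no proof of this observation at all --- it is presented as an immediate restatement of Algorithm~1 --- so your induction supplies strictly more than the authors do, and its skeleton is sound: the memorylessness of the loop, the remark that $p_0=\min\{m,\,n-a_m+k\}\ge 1$ so the recursion strictly shrinks $m$, and the base case $f(\emptyset)=\emptyset$ are exactly the right ingredients.

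There is, however, one point where your write-up glosses over a genuine discrepancy. You assert that the residual prefix $\{a_1,\ldots,a_{m-p_0}\}$ ``matches the arguments appearing in the definition of $f$,'' but the recursion as printed applies $f$ to $\{a_1,\ldots,a_{\min\{m,\,n-a_m+k\}}\}=\{a_1,\ldots,a_{p_0}\}$, not to $\{a_1,\ldots,a_{m-p_0}\}$. These coincide only when $2p_0=m$; in the extreme case $p_0=m$ (which occurs whenever $m\le n-a_m+k$, e.g.\ in the last iteration of the algorithm) the printed recursion would call $f$ on the entire sequence again and never terminate. So either the subscript in the displayed formula is a typo for $m-\min\{m,\,n-a_m+k\}$ --- which is what your argument correctly proves and what Algorithm~1 actually computes --- or the observation as literally written is false. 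Your proof is correct for the intended (corrected) recursion, but the one sentence claiming that the index ranges match the displayed definition of $f$ is not true of the formula as it stands; you should state explicitly that you are verifying the corrected version.
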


When $m = n$ and $d_1 \le d_2 \le \ldots \le d_n$ is the degree sequence of a graph $G$, we can use Algorithm 1 to find a partition of $V(G)$ into the minimum possible number of $k$-small sets.

\begin{cor}
Let $G$ be a graph and $d_1 \le \ldots \le d_n$ its degree sequence. Let $A = \{0 \le d_1 \le \ldots \le d_n \le n-1\}$ and let $V_1, V_2, \ldots, V_s$ be the sets of vertices corresponding to the degree subsequences $A_1, A_2, \ldots, A_s$ given by Algorithm 1 under input $A$. Then $V_1 \cup V_2 \cup \ldots V_s$ is a partition of $V(G)$ into $s = \varphi_k(G)$ $k$-small sets.
\end{cor}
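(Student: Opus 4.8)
The plan is to reduce the statement to the preceding theorem about Algorithm~1 by exploiting the fact that whether a set of vertices is $k$-small depends only on its cardinality and on the degrees of its vertices, which is exactly the arithmetic information encoded in the degree subsequence. First I would fix an ordering $v_1, v_2, \ldots, v_n$ of the vertices with $\deg(v_i) = d_i$, and let $V_i$ be the set of vertices whose degree terms were placed into the subsequence $A_i$ by the algorithm (if several vertices share a degree value, assign them to the subsequences in which the corresponding copies of that value appear; any such consistent choice works). Since $\{A_1, \ldots, A_s\}$ is a partition of $A$, the sets $V_1, \ldots, V_s$ partition $V(G)$, and moreover $|V_i| = |A_i|$ with the multiset of degrees of the vertices in $V_i$ equal to $A_i$.

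Next I would record the key equivalence: a set $W \subseteq V(G)$ is $k$-small if and only if the subsequence $B$ of the degree sequence $A$ formed by the degrees of the vertices of $W$ is a $k$-small subsequence of $A$ in the arithmetic sense. Indeed, both conditions say precisely that $\deg(v) \le n - |W| + k$ for every $v \in W$, equivalently $x \le n - |B| + k$ for every member $x$ of $B$, using $|B| = |W|$. Applying this with $W = V_i$ and $B = A_i$, and invoking the preceding theorem, which guarantees that each $A_i$ is a $k$-small subsequence, we conclude that each $V_i$ is a $k$-small set; hence $\varphi_k(G) \le s$.

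For the reverse inequality, let $W_1 \cup W_2 \cup \cdots \cup W_t$ be any partition of $V(G)$ into $t = \varphi_k(G)$ $k$-small sets. Replacing each $W_j$ by the subsequence $B_j$ of $A$ consisting of the degrees of the vertices of $W_j$ yields a partition of $A$ into $t$ subsequences, each of which is $k$-small by the equivalence above. Since the preceding theorem asserts that $s$ is the minimum number of $k$-small subsequences into which $A$ can be partitioned, we obtain $t \ge s$. Combined with $\varphi_k(G) \le s$, this gives $\varphi_k(G) = s$.

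The only delicate point is the bookkeeping of the vertex--degree correspondence when the degree sequence has repeated values, which amounts to observing that both Algorithm~1 and the definition of $k$-smallness are blind to the identities of vertices of equal degree; there is no genuine obstacle here, and the whole weight of the argument is carried by the preceding theorem on Algorithm~1.
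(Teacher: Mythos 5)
Your proof is correct and is precisely the argument the paper intends: the corollary is stated without proof as an immediate consequence of the preceding theorem, and your write-up simply makes explicit the vertex--degree correspondence and the equivalence between $k$-small vertex sets and $k$-small degree subsequences that the authors leave implicit. Both directions ($\varphi_k(G) \le s$ from the algorithm's output being $k$-small, and $\varphi_k(G) \ge s$ from the minimality clause of the theorem) are handled correctly, including the harmless bookkeeping for repeated degrees.
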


By the duality between $k$-small and $k$-large sequences and since $\Omega_k(G) = \varphi_k(\ov{G})$, we can modify Algorithm 1 to an algorithm that leads us to find the exact value of $\Omega_k(G)$. Again, consider any sequence of $m$ integers $B = \{0 \le b_m \le b_{m-1} \le  \ldots \le b_1 \le n-1\}$ (not necessarily graphic).

\noindent
{\bf Algorithm 2}\\[1ex]
INPUT: $B$\\
{\sc Step 1:} Set $i := 0$, $S_0 := B$.\\
{\sc Step 2:} Repeat\\[-5ex]
\begin{enumerate}
\item[(1)] $n_i := |S_i|$
\item[(2)] $q_i := \min \{n_i, b_{n_i}+k+1 \}$
\item[(3)] $B_{i+1} := \{b_{n_i}, b_{n_i-1}, \ldots , b_{n_i - q_i+1}\}$
\item[(4)] $S_{i+1} := S_i \setminus B_{i+1}$
\item[(5)] $i:=i+1$
\end{enumerate}
\mbox{}\\[-5ex]
until $S_i = \emptyset$.\\[1ex]
OUTPUT: $t := i$, $B_1, B_2, \ldots, B_t$.\\

\begin{thm}
Let $B = \{0 \le b_m \le b_{m-1} \le \ldots \le b_1 \le n-1\}$ be a sequence of $m$ integers. Then Algorithm 2 under input $B$ yields a minimum partition of $B$ into $s$ $k$-large subsequences $B_1, B_2, \ldots, B_t$.
\end{thm}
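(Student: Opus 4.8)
The plan is to deduce this from the theorem on Algorithm 1 established above, by means of the small/large duality of Observation~\ref{obs1}(i), rather than repeating the greedy-exchange argument with all inequalities reversed. Write $\ov{B} = \{0 \le a_1 \le a_2 \le \ldots \le a_m \le n-1\}$ for the complement sequence, so that $a_i = n - 1 - b_i$ and $\ov{\ov{B}} = B$. First I would run Algorithm 1 on the input $\ov{B}$ and Algorithm 2 on the input $B$ side by side and prove, by induction on the step counter $i$, the invariant that $R_i = \{a_1, \ldots, a_{n_i}\}$ and $S_i = \{b_1, \ldots, b_{n_i}\}$ with $R_i = \ov{S_i}$; in particular the two runs share the same value of $n_i$ at every step, and one then checks $p_i = q_i$ and $A_{i+1} = \ov{B_{i+1}}$.

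The base case is $R_0 = \ov{B} = \ov{S_0}$. For the inductive step, the largest element of $R_i$ is $a_{n_i}$, the smallest element of $S_i$ is $b_{n_i}$, and $a_{n_i} = n - 1 - b_{n_i}$; hence $q_i = \min\{n_i,\, b_{n_i} + k + 1\} = \min\{n_i,\, n - a_{n_i} + k\} = p_i$. Algorithm 1 removes the $p_i$ largest elements of $R_i$, namely $A_{i+1} = \{a_{n_i - p_i + 1}, \ldots, a_{n_i}\}$, while Algorithm 2 removes the $q_i = p_i$ smallest elements of $S_i$, namely $B_{i+1} = \{b_{n_i}, b_{n_i-1}, \ldots, b_{n_i - p_i + 1}\}$; complementing term by term gives $A_{i+1} = \ov{B_{i+1}}$, whence $R_{i+1} = R_i \setminus A_{i+1} = \ov{S_i \setminus B_{i+1}} = \ov{S_{i+1}}$ and $n_{i+1} = n_i - p_i$ in both runs. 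Since $R_i = \emptyset$ precisely when $S_i = \emptyset$, the two algorithms halt at the same step, so $t = s$ and $B_j = \ov{A_j}$ for $1 \le j \le t$.

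Then I would finish as follows. By the theorem on Algorithm 1, $A_1, \ldots, A_s$ is a minimum partition of $\ov{B}$ into $k$-small subsequences; applying Observation~\ref{obs1}(i) with $\ov{B}$ in the role of $S$, each $B_j = \ov{A_j}$ is a $k$-large subsequence of $\ov{\ov{B}} = B$, so $B_1, \ldots, B_t$ is a partition of $B$ into $k$-large subsequences. For optimality, let $B'_1, \ldots, B'_q$ be any partition of $B$ into $k$-large subsequences; then $\ov{B'_1}, \ldots, \ov{B'_q}$ is a partition of $\ov{B}$ and, by Observation~\ref{obs1}(i) again, each $\ov{B'_i}$ is $k$-small. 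Hence $q$ is at least the minimum number of $k$-small parts needed for $\ov{B}$, which is $s = t$, and so $t$ is minimum.

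The step that I expect to require the most care is the bookkeeping behind the inductive invariant: verifying that ``the $p_i$ largest elements of $R_i$'' and ``the $q_i$ smallest elements of $S_i$'' are complementary blocks with exactly the offsets written in the two pseudocodes, and that the running sizes $n_i$ remain synchronized. Once that invariant is in place everything else is automatic --- in particular there is no need to re-establish termination or the exchange-type optimality argument, since both are inherited from the Algorithm 1 theorem.
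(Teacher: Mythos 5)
Your proposal is correct and follows essentially the same route as the paper's own proof: running Algorithm 1 on $\ov{B}$ and Algorithm 2 on $B$ in parallel, establishing the complementation invariant $R_i = \ov{S_i}$, $p_i = q_i$, $A_{i+1} = \ov{B_{i+1}}$, and then transferring the conclusion via the small/large duality. If anything, you are slightly more explicit than the paper in spelling out why minimality carries over (the paper leaves the observation that any $k$-large partition of $B$ complements to a $k$-small partition of $\ov{B}$ implicit), which is a welcome addition rather than a deviation.
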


\begin{proof}
Let $A = \ov{B} = \{0 \le a_1 \le a_2 \le \ldots \le a_m \le n-1\}$ be the complementary sequence to $B$, where $a_i = n - b_i -1$. Then, from the application of Algorithm 1 under input $A$ and of Algorithm 2 under input $B$, it follows:
\begin{enumerate}
\item[(i)] $R_0 = A = \ov{B} = \ov{S_0}$
\item[(ii)] $R_i = \ov{S_i}$, $|R_i| = n_i = |S_i|$
\item[(iii)] $q_i = \min \{n_i, b_{n_i}+k+1\} =\min \{n_i, n - (n-1-b_{n_i}) + k\} =\min\{n_i, n - a_{n_i} + k \}= p_i$
\item[(iv)] $B_{i+1} = \{b_{n_i}, b_{n_i-1}, \ldots , b_{n_i - q_i+1}\} = \{n-a_{n_i}-1, n - a_{n_i-1}-1, \ldots , n- a_{n_i - q_i+1}-1\} = \{n-a_{n_i-q_i+1}-1, \ldots, n - a_{n_i-1}-1, n- a_{n_i}-1\} = \ov{A_{i+1}}$ and
\item[(v)] $S_{i+1} = S_i \setminus B_{i+1} = \ov{R_i} \setminus A_{i+1}$.
\end{enumerate}
Moreover, $S_i = \emptyset$ if and only if $R_i = \emptyset$ and thus the number of steps performed by Algorithm 1 under input $A$ is the same as the number of steps performed by Algorithm 2 under input $B$ and hence $s = t$. Since Algorithm 1 yields a partition of $A = \ov{B}$ into the $k$-small sets $A_1, A_2, \ldots, A_s$, the output $B_1, B_2 \ldots, B_t$ of Algorithm 2 is a partition of $B$ into $k$-large sets.
\end{proof}

Again, when $m = n$ and $d_n \le d_{n-1} \le \ldots \le d_1$ is the degree sequence of a graph $G$, we can use Algorithm 2 to find a partition of $V(G)$ into the minimum possible number of $k$-large sets.

\begin{cor}
Let $G$ be a graph and $d_n \le \ldots \le d_1$ its degree sequence. Let $B = \{0 \le d_n \le \ldots \le d_1 \le n-1\}$ and let $V_1, V_2, \ldots, V_t$ be the sets of vertices corresponding to the subsequences $B_1, B_2, \ldots, B_t$ given by Algorithm 2 under input $B$. Then $V_1 \cup V_2 \cup \ldots V_t$ is a partition of $V(G)$ into $t = \Omega_k(G)$ $k$-large sets.
\end{cor}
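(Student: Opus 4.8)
The plan is to reduce the final Corollary entirely to the preceding results, so that essentially no new work is needed. First I would observe that by Observation~\ref{obs1}(iv) we have $\Omega_k(G) = \varphi_k(\ov{G})$, so it suffices to connect the output of Algorithm~2 on $B = \{0 \le d_n \le \ldots \le d_1 \le n-1\}$ to a partition of $V(\ov{G})$ into $k$-small sets. The key point is that the sequence $B$ here is exactly the \emph{reversed} degree sequence of $G$, and its complementary sequence $\ov{B}$ (in the sense of the opening definitions, $a_i = n - b_i - 1$) is precisely the degree sequence $\ov{d}_1 \le \ov{d}_2 \le \ldots \le \ov{d}_n$ of $\ov{G}$, where $\ov{d}_i = n - 1 - d_{n-i+1}$, as already used in the proof of Theorem~\ref{thm_trivial-bounds}(iv).

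Next I would invoke the theorem on Algorithm~2: applied to the integer sequence $B$, it produces a minimum partition of $B$ into $t$ $k$-large subsequences $B_1, \ldots, B_t$. By the proof of that theorem, these $B_i$ are exactly the complements $\ov{A_i}$ of the subsequences $A_1, \ldots, A_s$ produced by Algorithm~1 on $\ov{B}$, with $s = t$. Translating back to vertices: the vertex sets $V_1, \ldots, V_t$ corresponding to $B_1, \ldots, B_t$ are the same vertex sets that correspond, via the degree sequence $\ov{d}$ of $\ov{G}$, to $A_1, \ldots, A_s$; and by the Corollary following Algorithm~1 applied to $\ov{G}$, those form a partition of $V(\ov{G}) = V(G)$ into $s = \varphi_k(\ov{G})$ $k$-small sets of $\ov{G}$. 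By Observation~\ref{obs1}(ii) a set is $k$-small in $\ov{G}$ if and only if it is $k$-large in $G$, so $V_1 \cup \cdots \cup V_t$ is a partition of $V(G)$ into $t$ $k$-large sets, and $t = s = \varphi_k(\ov{G}) = \Omega_k(G)$ by Observation~\ref{obs1}(iv).

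The one place that needs a little care — and the only real obstacle — is bookkeeping the indexing so that "the vertices corresponding to $B_i$" in $G$ genuinely coincide with "the vertices corresponding to $A_i$" in $\ov{G}$. Concretely, if $b_j$ sits in position $j$ of $B$ (so $b_j = d_{n-j+1}$, the degree in $G$ of the vertex one would call $v_{n-j+1}$ under the increasing ordering of $G$), then $a_j = n - 1 - b_j = \ov{d}_j$ is the degree in $\ov{G}$ of that same vertex, now occupying position $j$ under the increasing degree ordering of $\ov{G}$. Thus "position $j$ in $B$" and "position $j$ in $\ov{B}$" refer to the same physical vertex, and since $B_{i+1} = \ov{A_{i+1}}$ is just the claim that these two subsequences occupy the same set of positions, the vertex sets agree. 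I would spell this correspondence out in one or two sentences and then simply cite the Corollary after Algorithm~1 and the Theorem on Algorithm~2; no computation beyond what is already in the excerpt is required.

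\begin{proof}
By Observation~\ref{obs1}(iv), $\Omega_k(G) = \varphi_k(\ov{G})$. Let $\ov{d}_1 \le \ov{d}_2 \le \ldots \le \ov{d}_n$ be the degree sequence of $\ov{G}$, so that $\ov{d}_i = n-1-d_{n-i+1}$, and let $\ov{B} = \{0 \le \ov{d}_1 \le \ldots \le \ov{d}_n \le n-1\}$ be the complementary sequence to $B$ in the sense of the introductory definitions, i.e.\ with $\ov{d}_i = n - b_i - 1$ where $b_i = d_{n-i+1}$. For each position $i$, the element $b_i$ of $B$ is the degree in $G$ of a fixed vertex, and $\ov{d}_i$ is the degree in $\ov{G}$ of that same vertex; hence position $i$ of $B$ and position $i$ of $\ov{B}$ correspond to one and the same vertex of $G$.

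Apply Algorithm~1 to the sequence $\ov{B}$, obtaining subsequences $A_1, \ldots, A_s$, and Algorithm~2 to $B$, obtaining $B_1, \ldots, B_t$. By the Theorem on Algorithm~2 (and its proof), $s = t$ and $B_{i+1} = \ov{A_{i+1}}$ for each $i$; in particular $B_i$ and $A_i$ occupy the same set of positions, and therefore the vertex set $V_i$ corresponding to $B_i$ in $G$ equals the vertex set corresponding to $A_i$ under the ordering of $\ov{G}$ by $\ov{d}$. By the Corollary following Algorithm~1 applied to $\ov{G}$, $V_1 \cup V_2 \cup \ldots \cup V_s$ is a partition of $V(\ov{G}) = V(G)$ into $s = \varphi_k(\ov{G})$ $k$-small sets of $\ov{G}$.

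Finally, by Observation~\ref{obs1}(ii), a subset of $V(G)$ is a $k$-small set in $\ov{G}$ if and only if it is a $k$-large set in $G$. Hence $V_1 \cup V_2 \cup \ldots \cup V_t$ is a partition of $V(G)$ into $t$ $k$-large sets, and $t = s = \varphi_k(\ov{G}) = \Omega_k(G)$ by Observation~\ref{obs1}(iv).
\end{proof}
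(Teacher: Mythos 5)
Your proof is correct; the paper states this corollary without any proof, treating it as immediate from the Theorem on Algorithm 2, and your argument supplies exactly the complementation/duality bookkeeping (Algorithm 2 on $B$ versus Algorithm 1 on $\ov{B}$, the position-by-position vertex correspondence, and Observation \ref{obs1}(ii) and (iv)) that the paper itself uses to prove that theorem. This is essentially the intended route, with the minor tie-breaking issue in the degree ordering handled adequately, so there is nothing to object to.
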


\section{Bounds on $\varphi_k(G)$ and $\Omega_k(G)$}

\begin{thm}\label{thm_classical-Omega-phi}
Let $G$ be a graph on $n$ vertices and with average degree ${\rm d}$. Then\\[-5ex]
\begin{enumerate}
\item[(i)] $\displaystyle \varphi_k(G) \ge \sum_{v \in V(G)} \frac{1}{n-\deg(v)+k} \ge \frac{n}{n -{\rm d}+k}$;
\item[(ii)] $\displaystyle \Omega_k(G) \ge \sum_{v \in V(G)} \frac{1}{\deg(v)+k+1} \ge \frac{n}{{\rm d} + k +1}$.
\end{enumerate}
\end{thm}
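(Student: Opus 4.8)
The plan is to prove both inequalities at once using the duality $\varphi_k(G) = \Omega_k(\ov G)$ together with the identity $\deg_{\ov G}(v) = n - 1 - \deg_G(v)$, so that part (i) for $G$ is exactly part (ii) applied to $\ov G$ (indeed $n - \deg_G(v) + k = (n-1-\deg_G(v)) + k + 1 = \deg_{\ov G}(v) + k + 1$). So it suffices to prove (ii). For the second, weaker inequality in each part, once the first is established it follows immediately from the convexity of $x \mapsto 1/(x+k+1)$ on $x \ge 0$ via Jensen's inequality (or the Cauchy--Schwarz / power-mean argument): $\sum_v \frac{1}{\deg(v)+k+1} \ge \frac{n}{\frac1n\sum_v(\deg(v)+k+1)} = \frac{n}{{\rm d}+k+1}$. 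So the heart of the matter is the first inequality of (ii).

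To prove $\Omega_k(G) \ge \sum_{v} \frac{1}{\deg(v)+k+1}$, I would analyze the greedy partition produced by Algorithm 2 (equivalently, by the recursive procedure of peeling off a maximum-cardinality $k$-large block from the top of the degree sequence). Let $B_1, B_2, \ldots, B_t$ be the blocks produced, so $t = \Omega_k(G)$, and for each block $B_j$ let $n_{j-1}$ be the number of vertices remaining before $B_j$ is removed. The key structural fact from the construction is that $|B_j| = \min\{n_{j-1}, b_{n_{j-1}} + k + 1\}$, where $b_{n_{j-1}}$ is the smallest degree among the currently remaining vertices; in particular every vertex $v \in B_j$ satisfies $\deg(v) \ge b_{n_{j-1}} \ge |B_j| - k - 1$, i.e. $\deg(v) + k + 1 \ge |B_j|$, so $\frac{1}{\deg(v)+k+1} \le \frac{1}{|B_j|}$. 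Summing over the vertices of $B_j$ gives $\sum_{v \in B_j} \frac{1}{\deg(v)+k+1} \le |B_j| \cdot \frac{1}{|B_j|} = 1$, and summing over all $t$ blocks yields $\sum_{v \in V(G)} \frac{1}{\deg(v)+k+1} \le t = \Omega_k(G)$, which is exactly the claim.

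The one subtlety I want to be careful about — and I expect this to be the main obstacle — is the case analysis hidden in $|B_j| = \min\{n_{j-1}, b_{n_{j-1}}+k+1\}$: when the ``$\min$'' is achieved by $n_{j-1}$ (the last block, or a block that swallows all remaining vertices), we only know $n_{j-1} \le b_{n_{j-1}} + k + 1$, and I must still verify $\deg(v) + k + 1 \ge |B_j| = n_{j-1}$ for every remaining $v$; but this is immediate since $\deg(v) \ge b_{n_{j-1}}$ for all remaining $v$ and $b_{n_{j-1}} + k + 1 \ge n_{j-1}$ in this case, so the bound $\frac{1}{\deg(v)+k+1} \le \frac{1}{|B_j|}$ holds in both cases. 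When the ``$\min$'' is $b_{n_{j-1}}+k+1$, we have $|B_j| = b_{n_{j-1}} + k + 1 \le \deg(v) + k + 1$ directly. Once this per-block inequality is nailed down, the rest is just summation, and part (i) drops out by applying (ii) to $\ov G$.
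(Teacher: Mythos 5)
Your proof is correct, and the heart of it is the same estimate the paper uses: within each block $B$ of a partition into $k$-large sets, every vertex satisfies $\deg(v)+k+1\ge |B|$, so $\sum_{v\in B}\frac{1}{\deg(v)+k+1}\le 1$, and summing over blocks bounds the Caro--Wei-type sum by the number of blocks; the final step is Jensen, as in the paper. The two differences are cosmetic or, in one case, a mild overcomplication. First, you prove (ii) directly and get (i) by complementation, while the paper proves (i) for an arbitrary minimum partition into $k$-small sets and complements to get (ii); this is immaterial. Second, and more substantively, you route the argument through the greedy partition of Algorithm 2, which forces you to verify the per-block inequality from the algorithm's case analysis on $\min\{n_{j-1},\,b_{n_{j-1}}+k+1\}$ \emph{and} to invoke the algorithm's optimality (so that the number of blocks really equals $\Omega_k(G)$ rather than merely bounding it from above). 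Neither is needed: by definition of $\Omega_k(G)$ there exists a partition into exactly $\Omega_k(G)$ sets each of which is $k$-large, and the defining property of a $k$-large set already gives $\deg(v)\ge |B|-k-1$ for every $v\in B$ with no reference to how the partition was found. Your case analysis is a re-derivation of the fact, already established in the paper, that the algorithm's blocks are $k$-large. (One small slip in the prose: Algorithm 2 peels blocks off the \emph{bottom} of the degree sequence, built around the minimum remaining degree $b_{n_{j-1}}$, not the top; your formulas treat $b_{n_{j-1}}$ correctly as the smallest remaining degree, so the mathematics is unaffected.)
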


\begin{proof}
(i)  Let $V_1, V_2, \ldots, V_t$ be a partition of $V(G)$ into $t = \varphi_k(G)$ $k$-small sets and set $|V_i| = n_i$, for $1 \le i \le t$. Then, as $\deg(v) \le n - n_i + k$ for each $v \in V_i$, we have
$$\sum_{v \in V(G)} \frac{1}{n - \deg(G)+k} = \sum_{i=1}^t \sum_{v \in V_i} \frac{1}{n - \deg(v) + k} \le  \sum_{i=1}^t \sum_{v \in V_i} \frac{1}{n_i} = t = \varphi_k(G)$$
Now, Jensen's inequality yields
$$\varphi_k(G) \ge \sum_{v \in V(G)} \frac{1}{n-\deg(v)+k} \ge \frac{n}{n -{\rm d}+k}.$$
(ii) Since $\Omega_k(G) = \varphi_k(\ov{G})$, we obtain from (i)
$$\Omega_k(G) = \varphi_k(\ov{G}) \ge  \sum_{v \in V(G)} \frac{1}{n-\deg_{\ov{G}}(v)+k} \ge \frac{1}{n - {\rm d}(\ov{G}) + k},$$
which is equivalent to
$$\displaystyle \Omega_k(G) \ge \sum_{v \in V(G)} \frac{1}{\deg(v)+k+1} \ge \frac{n}{{\rm d} + k +1}.$$

\end{proof}

Theorems \ref{thm_basic_ineq} and \ref{thm_classical-Omega-phi} for $k=0$ imply the following corollary.

\begin{cor}\label{cor_avdeg}
Let $G$ be a graph on $n$ vertices and average degree ${\rm d}$. Then\\[-5ex]
\begin{enumerate}
\item[(i)] $\displaystyle \omega(G) \ge \varphi(G) \ge \sum_{v \in V(G)} \frac{1}{n-\deg(v)} \ge \frac{n}{n -{\rm d}}$;
\item[(ii)] $\displaystyle \alpha(G) \ge \Omega(G) \ge \sum_{v \in V(G)} \frac{1}{\deg(v)+1} \ge \frac{n}{{\rm d} +1}$.
\end{enumerate}
\end{cor}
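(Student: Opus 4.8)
The plan is simply to concatenate the two inequality chains that are already available. Both parts follow by specializing Theorem~\ref{thm_classical-Omega-phi} to the case $k=0$ and then prepending the corresponding inequality from Theorem~\ref{thm_basic_ineq}.

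For part (ii), I would first invoke Theorem~\ref{thm_classical-Omega-phi}(ii) with $k=0$, which reads
$$\Omega(G) \ge \sum_{v \in V(G)} \frac{1}{\deg(v)+1} \ge \frac{n}{{\rm d}+1}.$$
Then I would append on the left the bound $\alpha(G) \ge \Omega(G)$ furnished by Theorem~\ref{thm_basic_ineq}, obtaining the full chain
$\alpha(G) \ge \Omega(G) \ge \sum_{v} \frac{1}{\deg(v)+1} \ge \frac{n}{{\rm d}+1}$.
This is exactly the claimed statement, and it is worth noting that it refines the Caro--Wei bound (the middle term) and the Turán-type average-degree bound (the last term) simultaneously, inserting the combinatorial parameter $\Omega(G)$ between $\alpha(G)$ and these classical quantities.

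For part (i), I would proceed analogously: Theorem~\ref{thm_classical-Omega-phi}(i) with $k=0$ gives $\varphi(G) \ge \sum_{v} \frac{1}{n-\deg(v)} \ge \frac{n}{n-{\rm d}}$, and Theorem~\ref{thm_basic_ineq} supplies $\omega(G) \ge \varphi(G)$ on the left; splicing these yields the desired chain. Alternatively, one could derive (i) directly from (ii) applied to $\overline{G}$, using $\omega(G) = \alpha(\overline{G})$, $\varphi(G) = \Omega(\overline{G})$, $\deg_{\overline G}(v) = n-1-\deg(v)$ and ${\rm d}(\overline G) = n-1-{\rm d}(G)$ — this is the same bookkeeping already used inside the proof of Theorem~\ref{thm_classical-Omega-phi}(ii).

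There is essentially no obstacle here: the statement is a formal corollary of results proved earlier in the paper, so the only thing to be careful about is the substitution $k=0$ (legitimate since $0 \le \Delta$ and $0 \le n-\delta-1$ whenever $G$ has at least one edge and is not complete, and the degenerate cases are trivial) and keeping the direction of each inequality straight when passing to the complement in the alternative derivation of (i).
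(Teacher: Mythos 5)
Your proposal is correct and matches the paper exactly: the authors state that the corollary follows from Theorem~\ref{thm_basic_ineq} together with Theorem~\ref{thm_classical-Omega-phi} at $k=0$, which is precisely your splicing of the two inequality chains. (Your cautionary remark about the legitimacy of $k=0$ is harmless but unnecessary, since neither theorem imposes any restriction on $k$.)
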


The first explicit proof of $\alpha(G)\ge\frac n{{\rm d}+1}$ can be found in \cite{ErGa}. Note also that item (ii) of the previous corollary improves the Caro-Wei bound $\alpha(G) \ge \sum_{v \in V(G)} \frac{1}{\deg(v) +1}$ \cite{Caro, Wei-1}. Moreover, the bound $\varphi(G) \ge \sum_{v \in V(G)} \frac{1}{n-\deg(v)}$ was given in \cite{Nen}.  From the result that $\displaystyle \alpha(G) \ge \Omega(G)$, one may ask if $\alpha_k(G) \ge (k+1) \Omega_k(G)$ holds in general. However, this is in general wrong, as can be seen by the following counter example. Let $n= (k+2) q$  for an integer $q < k+1$ and let $G =K_{1,n}$ be a star with $n$ leaves. Then, clearly, $\alpha_k(G) = n = (k+2) q$. Moreover, $\Omega_k(G) = \left\lceil \frac{n+1}{k+2} \right\rceil = q+1$, since every $k$-large set containing a vertex of degree one has cardinality at most $k+2$. Hence, in this case we have $\alpha_k(G) = (k+2) q < (k+1) (q+1) = (k+1) \Omega(G)$ for $q < k+1$. \\

In view of the above counter example the following problem seems natural. 
 
 \noindent
{\bf Problem.} Let $G$ be a graph on $n$ vertices. Is it true that $$\alpha_k(G) \ge \sum_{v \in V(G)} \frac{k+1}{\deg(v) +k+1} \ge \frac{n}{{\rm d}(G) +k+1}?$$

\begin{cor}\label{cor_edges}
Let $G$ be a graph on $n$ vertices and $e(G)$ edges. Then\\[-5ex]
\begin{enumerate}
\item[(i)] $e(G) \le \frac{1}{2} \left( n^2- \frac{n^2}{\varphi_k(G)} + nk\right)$;
\item[(ii)] $e(G) \ge \frac{1}{2} \left( \frac{n^2}{\Omega_k(G)} - n(k+1) \right)$.
\end{enumerate}
\end{cor}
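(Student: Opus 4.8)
The plan is to obtain both inequalities as immediate reformulations of Theorem \ref{thm_classical-Omega-phi}, using only the elementary identity ${\rm d} = {\rm d}(G) = \frac{2e(G)}{n}$ and then solving the resulting inequalities for $e(G)$.

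For part (i), I would start from the bound $\varphi_k(G) \ge \frac{n}{n - {\rm d} + k}$ provided by Theorem \ref{thm_classical-Omega-phi}(i). Substituting ${\rm d} = \frac{2e(G)}{n}$ turns this into $\varphi_k(G) \ge \frac{n^2}{n^2 - 2e(G) + nk}$. Before cross-multiplying I would first record that the denominator $n^2 - 2e(G) + nk = n(n - {\rm d} + k)$ is positive: since ${\rm d} \le n - 1$ and $k \ge 0$, we have $n - {\rm d} + k \ge 1 > 0$. Hence clearing the denominator is legitimate and preserves the inequality, giving $n^2 - 2e(G) + nk \ge \frac{n^2}{\varphi_k(G)}$; isolating $e(G)$ yields $e(G) \le \frac{1}{2}\bigl(n^2 - \frac{n^2}{\varphi_k(G)} + nk\bigr)$, as claimed.

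For part (ii), I would argue symmetrically starting from $\Omega_k(G) \ge \frac{n}{{\rm d} + k + 1}$ of Theorem \ref{thm_classical-Omega-phi}(ii). Substituting ${\rm d} = \frac{2e(G)}{n}$ gives $\Omega_k(G) \ge \frac{n^2}{2e(G) + n(k+1)}$, where the denominator $2e(G) + n(k+1)$ is manifestly positive. Cross-multiplying and rearranging produces $2e(G) + n(k+1) \ge \frac{n^2}{\Omega_k(G)}$, which after dividing by $2$ is precisely $e(G) \ge \frac{1}{2}\bigl(\frac{n^2}{\Omega_k(G)} - n(k+1)\bigr)$.

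There is essentially no real obstacle: the corollary is pure bookkeeping on top of Theorem \ref{thm_classical-Omega-phi}. The only step that deserves a sentence of care is verifying the positivity of $n - {\rm d} + k$ in part (i) before clearing the denominator, so that the inequality does not flip; in part (ii) the analogous positivity is trivial.
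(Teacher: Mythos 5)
Your proposal is correct and follows essentially the same route as the paper: both derive the bounds by rewriting Theorem \ref{thm_classical-Omega-phi} via $n{\rm d} = 2e(G)$ and solving for $e(G)$. Your added remark on the positivity of $n - {\rm d} + k$ is a harmless extra precaution (and you correctly write $n(k+1)$ where the paper's proof of (ii) has a typo $k(n+1)$), but the argument is the same.
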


\begin{proof}
(i) From Theorem \ref{thm_classical-Omega-phi} (i) and the fact that $n {\rm d} = 2 e(G)$, it follows
$\varphi_k(G) \ge \frac{n}{n -{\rm d}+k} = \frac{n^2}{n^2- 2e(G)+kn}$. Solving this inequality for $e(G)$, we obtain the desired result.\\
(ii) Similar as in (i),  from Theorem \ref{thm_classical-Omega-phi} (ii) and the fact that $n {\rm d} = 2 e(G)$, it follows that $\Omega_k(G) \ge \frac{n^2}{2 e(G) + k(n+1)}$. Solving the obtained inequality for $e(G)$, the result follows.
\end{proof}

In the special case $k=0$, Corollary \ref{cor_edges} yields $e(G) \leq \frac{n^2(\varphi(G)-1)}{2\varphi(G)}$. This bound is better than the bound $e(G)\leq\frac{n^2(\omega(G)-1)}{2\omega(G)}$ from classical Tur\'an's Theorem, because $\omega (G) \geq \varphi(G)$. To illustrate this by an example, let $G$ be the graph obtained from the graph $2K_n$ by adding $n$ new independent edges between the two copies of $K_n$. Then $\varphi(G) = 2$ and $\omega(G) = n$. From Tur\'an's Theorem we have $e(G) \leq 2n(n-1)$ and from Corollary \ref{cor_edges}(ii) follows that $e(G)\leq n^2$. The last inequality gives us the exact value of $e(G)$.\\

\begin{thm}\label{thm_upper&lower-bounds}
Let $G$ be a graph on $n$ vertices with minimum degree $\delta$, maximum degree $\Delta$ and average degree ${\rm d}$. Then:\\[-5ex]
\begin{enumerate}
\item[(i)] $\left \lceil \frac{n}{n-{\rm d}+k} \right \rceil \le \varphi_k(G)
\le \left\lceil \frac{n}{n+k-\Delta} \right \rceil$;
\item[(ii)] $\left \lceil \frac{n}{{\rm d} + k + 1} \right \rceil \le \Omega_k(G) \le \left \lceil \frac{n}{\delta + k + 1} \right \rceil$;
\item[(iii)] If $\frac{r-2}{r-1}n + k < {\rm d} \le \Delta \le\frac{r-1}{r}n+k$, then $\varphi_k(G) = r$; 
\item[(iv)] If $\frac{n}{r} - k - 1 \le \delta \le {\rm d} < \frac{n}{r-1} - k - 1$, then $\Omega_k(G) = r$;
\item[(v)] If $G$ is $r$-regular, then $\varphi_k(G) = \left \lceil \frac{n}{n+k-r} \right \rceil$ and $\Omega_k(G) = \left \lceil \frac{n}{r + k + 1} \right \rceil$.
\end{enumerate}
\end{thm}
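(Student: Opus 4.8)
The plan is to derive all five parts from two ingredients already at our disposal: the lower bounds of Theorem~\ref{thm_classical-Omega-phi}, and the elementary observation that a sufficiently small vertex set is automatically $k$-small (respectively $k$-large).

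For the lower bounds in (i) and (ii), since $\varphi_k(G)$ and $\Omega_k(G)$ are integers, Theorem~\ref{thm_classical-Omega-phi} immediately upgrades to $\varphi_k(G)\ge\bigl\lceil\tfrac{n}{n-{\rm d}+k}\bigr\rceil$ and $\Omega_k(G)\ge\bigl\lceil\tfrac{n}{{\rm d}+k+1}\bigr\rceil$. For the upper bound in (i), I would note first that $n+k-\Delta\ge 1$ because $\Delta\le n-1$, and then observe that any vertex set $A$ with $|A|\le n+k-\Delta$ is $k$-small, since $\deg(v)\le\Delta\le n-|A|+k$ for all $v\in A$. Splitting $V(G)$ greedily into blocks of size at most $n+k-\Delta$ yields $\bigl\lceil\tfrac{n}{n+k-\Delta}\bigr\rceil$ $k$-small sets, so $\varphi_k(G)\le\bigl\lceil\tfrac{n}{n+k-\Delta}\bigr\rceil$. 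The upper bound in (ii) follows in the same manner (any set of size at most $\delta+k+1$ is $k$-large) or, more quickly, by applying (i) to $\ov G$ via $\Omega_k(G)=\varphi_k(\ov G)$ together with $\delta(\ov G)=n-1-\Delta$, $\Delta(\ov G)=n-1-\delta$ and ${\rm d}(\ov G)=n-1-{\rm d}$.

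Parts (iii) and (iv) are then just substitutions into (i) and (ii). In (iii), ${\rm d}>\tfrac{r-2}{r-1}n+k$ gives $n-{\rm d}+k<\tfrac{n}{r-1}$, hence $\tfrac{n}{n-{\rm d}+k}>r-1$ and $\varphi_k(G)\ge r$; and $\Delta\le\tfrac{r-1}{r}n+k$ gives $n+k-\Delta\ge\tfrac{n}{r}$, hence $\tfrac{n}{n+k-\Delta}\le r$ and (as $r\in\mathbb{Z}$) $\varphi_k(G)\le r$, so $\varphi_k(G)=r$. Part (iv) is the analogous sandwich from (ii): ${\rm d}<\tfrac{n}{r-1}-k-1$ forces $\Omega_k(G)\ge r$ and $\delta\ge\tfrac{n}{r}-k-1$ forces $\Omega_k(G)\le r$. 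Part (v) is the special case $\delta=\Delta={\rm d}=r$, in which the lower and upper bounds of (i), respectively (ii), coincide. No step is genuinely hard; the only points demanding care are checking that the four denominators $n-{\rm d}+k$, $n+k-\Delta$, ${\rm d}+k+1$, $\delta+k+1$ are positive for a simple graph (clear from $0\le{\rm d}\le\Delta\le n-1$ and $k\ge 0$) and keeping the ceiling inequalities pointing the right way, i.e.\ using $\lceil x\rceil\le m\iff x\le m$ for $m\in\mathbb{Z}$.
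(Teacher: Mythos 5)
Your proposal is correct and follows essentially the same route as the paper: the lower bounds come from Theorem~\ref{thm_classical-Omega-phi} plus integrality, the upper bound in (i) comes from greedily partitioning $V(G)$ into blocks of size at most $n+k-\Delta$ (each automatically $k$-small), (ii) follows by complementation, and (iii)--(v) are the same sandwich arguments. The only cosmetic difference is that the paper splits off one block of size exactly $n-\Delta+k$ before partitioning the remainder and treats $k>\Delta$ as a separate trivial case, whereas you handle everything uniformly by noting $n+k-\Delta\ge 1$; both versions are fine.
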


\begin{proof}
(i) From Theorem \ref{thm_classical-Omega-phi}(i), it follows directly
$$\varphi_k(G) \ge \left \lceil \frac{n}{n-{\rm d}+k} \right \rceil.$$
Let now $G$ be a graph on $n$ vertices and with maximum degree $\Delta$. If $k > \Delta$, then $\varphi_k(G) =1$ and the right inequality side is obvious. So let $k \le \Delta$ and let $A \subseteq V(G)$ be a set of cardinality $n - \Delta +k$. Then, for any $v \in A$, $\deg(v) \le \Delta = n - (n - \Delta+k) +k = n - |A| +k$ and hence $A$ is a $k$-small set. Now we will partition $V(G) \setminus A$ into $k$-small sets. Note that $|V(G) \setminus A| = \Delta - k$. So take a partition $V_1, V_2, \ldots, V_t$ of $V(G) \setminus A$ into $t = \left\lceil \frac{\Delta-k}{n-\Delta+k} \right \rceil$ sets such that $|V_i| = n-\Delta +k$ for $i = 1, 2, \ldots, t-1$ and $|V_t| \le n- \Delta + k$. Since, for every vertex $v \in V_i$, $\deg(v) \le \Delta = n- (n - \Delta +k) + k \le n - |V_i|+k$, $V_i$ is a $k$-small set, for $1 \le i \le t$. Hence $A \cup V_1 \cup V_2 \cup \ldots \cup V_t$ is a partition of $V(G)$ into $1 + t = 1 + \left\lceil \frac{\Delta-k}{n-\Delta+k}\right \rceil = \left\lceil \frac{n}{n - \Delta+k}\right \rceil$ $k$-small sets, and thus
$$\varphi_k(G) \le \left\lceil \frac{n}{n - \Delta +k}\right \rceil.$$
(ii) Theorem \ref{thm_classical-Omega-phi}(ii) yields
$$\Omega_k(G) \ge \left \lceil \frac{n}{{\rm d} + k + 1} \right \rceil.$$
The other inequality side is obtained from (i) through $\Omega_k(G) = \varphi_k(\ov{G}) \le \left\lceil \frac{n}{n+k-\Delta(\ov{G})} \right \rceil = \left\lceil \frac{n}{\delta + k +1} \right \rceil.$\\
(iii) If $\frac{r-2}{r-1}n + k < {\rm d} \le \Delta \le\frac{r-1}{r}n+k$, we obtain from (i)
$$\displaystyle r-1 = \left\lceil \frac{n}{n- \frac{r-2}{r-1}n} \right\rceil  <  \left\lceil \frac{n}{n + k- {\rm d}} \right\rceil \le \varphi_k(G)
\le \left\lceil \frac{n}{n +k- \Delta} \right\rceil \le  \left\lceil \frac{n}{n- \frac{r-1}{r}n} \right\rceil = r$$
and thus $\varphi_k(G) = r$.\\
(iv) If $\frac{n}{r} - k - 1 \le \delta \le {\rm d} < \frac{n}{r-1} - k - 1$, we obtain from (ii)
$$ r-1= \left \lceil \frac{n}{\frac{n}{r-1}} \right \rceil  < \left \lceil \frac{n}{{\rm d} + k + 1} \right \rceil \le \Omega_k(G) \le \left \lceil \frac{n}{\delta + k + 1} \right \rceil \le \left \lceil \frac{n}{\frac{n}{r}} \right \rceil = r$$
and thus $\Omega_k(G) = r$.\\
(v) Recall from (i) that $\left\lceil \frac{n}{n +k- {\rm d}} \right\rceil \le \varphi_k(G)
\le \left\lceil \frac{n}{n +k- \Delta} \right\rceil$ and thus, if ${\rm d} = \Delta = r$, we have $\varphi_k(G) = \left \lceil \frac{n}{n+k-r} \right \rceil$. Analogously, item (ii) yields $\Omega_k(G) =  \left \lceil \frac{n}{r + k + 1} \right \rceil$.
\end{proof}

\section{More applications to $\alpha(G)$ and $\omega(G)$}

\begin{thm}\label{thm_Hansen}
Let $G$ be a graph on $n$ vertices and with minimum degree $\delta$ and maximum degree $\Delta$. Then\\[-5ex]
\begin{enumerate}
\item[(i)] $\alpha_k(G) \le S_k(G) \le \frac{n- \Delta + k}{2} + \sqrt{\frac{(n- \Delta +k)^2}{4} + n\Delta - 2 e(G)}$;
\item[(ii)] $\omega_k(G) \le L_k(G) \le \frac{\delta + k + 1}{2} + \sqrt{\frac{(\delta +k+1)^2}{4} - n\delta + 2 e(G)}$.\end{enumerate}
Moreover, all bounds are sharp for regular graphs.
\end{thm}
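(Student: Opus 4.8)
The plan is to reduce everything to bounding $S_k(G)$ and $L_k(G)$, since the left-hand inequalities $\alpha_k(G)\le S_k(G)$ and $\omega_k(G)\le L_k(G)$ are exactly Theorem~\ref{thm_trivial-bounds}(i). For (i) I would set $s=S_k(G)$ and use Theorem~\ref{thm_trivial-bounds}(iii): the set $\{v_1,\dots,v_s\}$ is $k$-small, so $d_i\le d_s\le n-s+k$ for every $i\le s$, while $d_i\le\Delta$ for $i>s$. Summing all degrees and using $\sum_{i=1}^n d_i=2e(G)$ then gives
$$2e(G)\;\le\;s(n-s+k)+(n-s)\Delta ,$$
which rearranges to the quadratic inequality $s^2-s(n+k-\Delta)+2e(G)-n\Delta\le 0$. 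Because $2e(G)=\sum d_i\le n\Delta$, the constant term is nonpositive, so the quadratic has one nonpositive and one nonnegative root; since $s\ge 0$, it must lie at or below the larger root, namely $\frac{n-\Delta+k}{2}+\sqrt{\frac{(n-\Delta+k)^2}{4}+n\Delta-2e(G)}$, and the radicand is $\ge 0$ for the same reason while $n+k-\Delta>0$ because $\Delta\le n-1$. That is (i).

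For (ii) I would dualize rather than repeat the argument. By Observation~\ref{obs1}(iii) we have $L_k(G)=S_k(\ov{G})$, and $\ov{G}$ has $n$ vertices, maximum degree $\Delta(\ov{G})=n-1-\delta$ and $e(\ov{G})=\binom{n}{2}-e(G)$. Plugging these into the bound of (i) applied to $\ov{G}$ and simplifying the two expressions $n-\Delta(\ov{G})+k=\delta+k+1$ and $n\Delta(\ov{G})-2e(\ov{G})=-n\delta+2e(G)$ yields exactly the stated bound on $L_k(G)$. (Alternatively, one can argue directly: with $t=L_k(G)$, Theorem~\ref{thm_trivial-bounds}(iv) forces the $t$ largest degrees to be at least $t-k-1$ and the remaining $n-t$ degrees to be at least $\delta$, so $2e(G)\ge(n-t)\delta+t(t-k-1)$, and solving this quadratic in $t$ gives the same bound; here $2e(G)\ge n\delta$ ensures the radicand is nonnegative.)

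For the sharpness claim, I would just specialize to an $r$-regular graph $G$: then $\Delta=\delta=r$ and $2e(G)=nr$, so in (i) the quantity under the root becomes $\frac{(n-r+k)^2}{4}$ and the right-hand side collapses to $n-r+k$, which by Observation~\ref{obs_SLd}(iii) equals $S_k(G)$ whenever $k\le r$; symmetrically, the right-hand side of (ii) collapses to $r+k+1=L_k(G)$ by Observation~\ref{obs_SLd}(iii) whenever $k\le n-r-1$. I do not anticipate a real obstacle: the whole proof is a single degree-counting step, and the only place demanding a little care is the quadratic bookkeeping — choosing the larger root and checking that the radicand and the linear coefficient are nonnegative — which follows from the trivial bounds $n\delta\le 2e(G)\le n\Delta$ and $\Delta\le n-1$.
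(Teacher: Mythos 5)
Your proposal is correct and follows essentially the same route as the paper: a single degree-count over a maximum $k$-small set giving the quadratic inequality $s^2-(n-\Delta+k)s-n\Delta+2e(G)\le 0$, part (ii) by complementation, and sharpness for regular graphs via Observation \ref{obs_SLd}(iii). Your extra bookkeeping about the sign of the radicand and the choice of root is a welcome refinement of details the paper leaves implicit, but it is not a different argument.
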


\begin{proof}
(i) Let $A$ be a maximum $k$-small set and let $\Delta$ be the maximum degree of $G$. Then $\deg(v) \le n - |A| + k$ for all $v \in A$. Then
\begin{eqnarray*}
2 e(G) = \sum_{v \in V(G)} \deg(v) &=& \sum_{v \in A} \deg(v) + \sum_{v \in V(G) \setminus A} \deg(v)\\
                                                             &\le& (n - |A| + k)|A| + \Delta (n-|A|)\\
                                                             &=&-|A|^2 + (n - \Delta +k) |A| + n \Delta,
\end{eqnarray*}
 which implies that $|A|^2 - (n-\Delta+k)|A| - n\Delta + 2 e(G)\le 0$. Solving the quadratic inequality, we obtain the desired bound
$$\alpha_k(G) \le S_k(G) \le \frac{n- \Delta + k}{2} + \sqrt{\frac{(n- \Delta +k)^2}{4} + n\Delta - 2 e(G)}.$$
Finally, if $G$ is $r$-regular, by Observation \ref{obs_SLd}(iii), all inequalities become equalities.\\
(ii) This follows from $\omega_k(G) = \alpha_k(\ov{G})$ and item (i).
\end{proof}

The following corollary is straightforward from previous theorem and Observation \ref{obs_chi}.\\

\begin{cor}\label{cor_Hansen}
Let $G$ be a graph on $n$ vertices, with maximum degree $\Delta$ and minimum degree $\delta$. Then\\[-5ex]
\begin{enumerate}
\item[(i)] $\alpha(G) \le \theta(G)\le S_0(G)
                                     \le \left\lfloor \frac{n-\Delta}{2} + \sqrt{\frac{(n-\Delta)^2}{4} + n\Delta - 2 e(G)} \right\rfloor
                                     \le \left\lfloor \frac{1}{2} + \sqrt{ \frac{1}{4} + n^2 - n - 2e(G)} \right\rfloor$;
\item[(ii)] $\omega(G) \le\chi(G) \le L_0(G)
                                       \le \left\lfloor \frac{\delta + 1}{2} + \sqrt{\frac{(\delta+1)^2}{4} - n\delta + 2 e(G)} \right\rfloor
                                        \le \left\lfloor \frac{1}{2} + \sqrt{\frac{1}{4} + 2 e(G)}\right\rfloor$.
\end{enumerate}
\end{cor}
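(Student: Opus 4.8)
The plan is to deduce both chains of inequalities essentially by combining the previous two results: Theorem \ref{thm_Hansen} (with $k=0$) supplies the middle and the first square-root bound, while Observation \ref{obs_chi} supplies the left-hand inequalities $\alpha(G) \le \theta(G) \le S_0(G)$ and $\omega(G) \le \chi(G) \le L_0(G)$. Since $\theta(G)$, $S_0(G)$, $\chi(G)$ and $L_0(G)$ are all integers, we may freely insert the floor function around the square-root expressions, which accounts for the $\lfloor \cdot \rfloor$ appearing in the statement. So the only genuine work is to verify the last inequality in each chain, namely that the $(n-\Delta)$-weighted (resp. $(\delta+1)$-weighted) square-root bound is dominated by the cruder bound obtained by substituting the extreme values $\Delta = n-1$ (resp. $\delta = 0$).

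For part (i), I would treat the right-hand side of Theorem \ref{thm_Hansen}(i) with $k=0$ as a function of $\Delta$, say $g(\Delta) = \frac{n-\Delta}{2} + \sqrt{\frac{(n-\Delta)^2}{4} + n\Delta - 2e(G)}$, and show it is non-decreasing in $\Delta$ on the relevant range $0 \le \Delta \le n-1$; then plugging in $\Delta = n-1$ gives $\frac12 + \sqrt{\frac14 + n(n-1) - 2e(G)} = \frac12 + \sqrt{\frac14 + n^2 - n - 2e(G)}$, which is the claimed outer bound. Monotonicity can be checked by differentiating, or more elementarily by writing $g(\Delta) = u + \sqrt{u^2 + n(n-1) - 2e(G) - u(n-1)}$ with $u = \frac{n-\Delta}{2}$ decreasing in $\Delta$, and arguing directly that decreasing $\Delta$ (increasing $u$) only increases $g$ — one compares $g$ at two values of $\Delta$ and reduces to a comparison of the radicands after isolating square roots. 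For part (ii), the same idea applies to $h(\delta) = \frac{\delta+1}{2} + \sqrt{\frac{(\delta+1)^2}{4} - n\delta + 2e(G)}$: one shows $h$ is non-increasing in $\delta$ on $0 \le \delta \le n-1$, so that its maximum is at $\delta = 0$, giving $\frac12 + \sqrt{\frac14 + 2e(G)}$. Alternatively — and this is cleaner — part (ii) follows from part (i) applied to $\ov{G}$, since $\omega(G) = \alpha(\ov{G})$, $\chi(G) = \theta(\ov{G})$, $L_0(G) = S_0(\ov{G})$, $e(\ov{G}) = \binom{n}{2} - e(G)$, and $\delta(\ov{G}) = n - 1 - \Delta(G)$; substituting these into the chain of (i) for $\ov{G}$ and simplifying the radicands reproduces exactly the chain of (ii).

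The main obstacle — really the only non-routine point — is justifying the monotonicity of $g$ in $\Delta$ (equivalently of $h$ in $\delta$), i.e. that replacing $\Delta$ by the trivial upper bound $n-1$ genuinely weakens the inequality rather than the reverse. This needs the observation that the radicand $\frac{(n-\Delta)^2}{4} + n\Delta - 2e(G)$ is itself non-negative throughout (guaranteed because Theorem \ref{thm_Hansen}(i) already asserts the square root is real and bounds a non-negative quantity), so that the square-root function is being applied on its domain and the comparison of radicands is legitimate; once that is in hand, isolating one square root and squaring reduces everything to a linear inequality in $\Delta$. Finally, sharpness for regular graphs is inherited verbatim from Theorem \ref{thm_Hansen}: when $G$ is $r$-regular, $\Delta = \delta = r$, the outer crude bounds are not claimed to be sharp, but the inner ones coincide with $S_0(G)$ and $L_0(G)$ by Observation \ref{obs_SLd}(iii), and together with $\alpha(G) = \theta(G)$, $\omega(G) = \chi(G)$ holding for (vertex-transitive, in particular) many regular graphs one recovers equality in the first three terms of each chain.
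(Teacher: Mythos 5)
Your proposal is correct and follows essentially the same route as the paper: the left inequalities come from Observation \ref{obs_chi}, the middle bound from Theorem \ref{thm_Hansen} with $k=0$, the outermost bound from monotonicity of the expression in $\Delta$ (with $\Delta\le n-1$), and part (ii) by complementation via $L_0(G)=S_0(\ov{G})$, exactly as in the paper. (Only the parenthetical rewriting of the radicand in terms of $u=\tfrac{n-\Delta}{2}$ contains a small algebraic slip, but your primary justification by differentiation is sound and the final remarks on sharpness are not needed, since the corollary claims none.)
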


\begin{proof}
(i) This follows from Observation \ref{obs_chi}(i) and Theorem \ref{thm_Hansen}(i) setting $k=0$. The last inequality follows because the expression is monotone increasing with $\Delta$ and $\Delta \le n-1$.\\
(ii) This follows from (i), Observation \ref{obs_chi}(ii) and $L_0(G) = S_0(\ov{G})$. \end{proof}

Note that item (i) of Corollary \ref{cor_Hansen} is a refinement of the Hansen-Zheng bound \cite{HaZhe} which states that $\alpha(G) \le \left\lfloor{\frac{1}{2} + \sqrt{\frac{1}{4} + n^2 - n - 2 e(G)}} \right\rfloor$. The inequality $\chi(G)\le\left\lfloor \frac{1}{2} + \sqrt{\frac{1}{4} + 2 e(G)}\right\rfloor$ also is well known (cf. Proposition 5.2.1 in \cite{Die}).\\



We will need the following notation. For a set $A$ of vertices of a graph $G$, let ${\rm {\rm d_r}}(A) = \sqrt[r]{\frac{1}{|A|} \sum_{v \in A}\deg^r(v)}$. When $r= 1$, we will set ${\rm d}(A)$ for ${\rm d_1}(A)$ and when $A = V(G)$, we will set ${\rm {\rm d_r}}(G)$ instead of ${\rm {\rm d_r}}(V(G))$. Note that ${\rm d}(G)$ is the average degree of $G$. In the following, we will show that the inequality $\varphi(G)\ge\frac n{n-{\rm d}(G)}$ given in Corollary \ref{cor_avdeg} can be improved when  $\rm d(G)$ is substituted by $\rm d_3(G)$. However, we will also show that, for $r \ge 4$, ${\rm d}(G)$ will not be able to be replaced by ${\rm d}_r(G)$ that easily. First, we need to prove the following lemma.\\

\begin{lem}\label{lem_tool}
Let $\beta_1, \beta_2, \ldots, \beta_r \in [0,1]$ be real numbers such that $\beta_1 + \beta_2 + \ldots + \beta_r \le r-1$. Then
\begin{equation}\label{ineq}
\sum_{i = 1}^r (1-\beta_i) \beta_i^r \le \left(\frac{r-1}{r} \right)^r
\end{equation}
and equality holds if and only if $\beta_1 = \beta_2 = \ldots = \beta_r = \frac{r-1}{r}$.
\end{lem}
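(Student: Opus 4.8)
The plan is to prove the inequality by a Lagrange-multiplier / boundary-analysis argument on the compact polytope
$$P = \Bigl\{(\beta_1,\ldots,\beta_r) \in [0,1]^r : \textstyle\sum_{i=1}^r \beta_i \le r-1\Bigr\}.$$
The function $F(\beta_1,\ldots,\beta_r) = \sum_{i=1}^r (1-\beta_i)\beta_i^r$ is continuous on the compact set $P$, so it attains a maximum; the whole task is to locate that maximum and show it equals $\bigl(\frac{r-1}{r}\bigr)^r$, attained only at the center point $\beta_1=\cdots=\beta_r=\frac{r-1}{r}$. Since $F$ is separable (a sum of single-variable terms $g(\beta_i)$ with $g(x) = (1-x)x^r = x^r - x^{r+1}$), the analysis of each coordinate is governed by the one-variable function $g$, whose derivative is $g'(x) = r x^{r-1} - (r+1)x^r = x^{r-1}\bigl(r - (r+1)x\bigr)$, vanishing on $[0,1]$ at $x=0$ and at $x = \frac{r}{r+1}$, with $g$ increasing on $[0,\frac{r}{r+1}]$ and decreasing on $[\frac{r}{r+1},1]$.

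First I would dispose of the interior critical points: if a maximizer lies in the interior of $P$ (all $\beta_i \in (0,1)$ and $\sum \beta_i < r-1$), then each $g'(\beta_i) = 0$, forcing every $\beta_i \in \{0,\frac{r}{r+1}\}$; but $\beta_i=0$ is excluded, so all $\beta_i = \frac{r}{r+1}$, giving $\sum\beta_i = \frac{r^2}{r+1} > r-1$, a contradiction. Hence the maximum occurs on the boundary of $P$. Next I would handle the facets $\beta_i = 0$ and $\beta_i = 1$: if some $\beta_j = 1$ then its term contributes $0$ and we may as well reduce to fewer variables; if some $\beta_j = 0$ likewise its term is $0$. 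The cleanest route is to argue that at a maximizer the active constraint must be $\sum_{i=1}^r \beta_i = r-1$ — because if $\sum \beta_i < r-1$ and some $\beta_i < \frac{r}{r+1}$, increasing that $\beta_i$ strictly increases $g(\beta_i)$ (it is still below the peak $\frac{r}{r+1}$, noting $\frac{r-1}{r} < \frac{r}{r+1}$), contradicting maximality; and one checks that not all $\beta_i$ can be $\ge \frac{r}{r+1}$ under $\sum\beta_i \le r-1$.

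On the hyperplane $\sum_{i=1}^r \beta_i = r-1$, I would use Lagrange multipliers: at a maximizer with all $\beta_i \in (0,1)$, there is $\lambda$ with $g'(\beta_i) = \lambda$ for all $i$, i.e. $r\beta_i^{r-1} - (r+1)\beta_i^r = \lambda$ for every $i$. The key step is then a convexity/monotonicity argument showing the map $x \mapsto g'(x) = r x^{r-1} - (r+1) x^r$ restricted to the relevant range $[0, \frac{r}{r+1}]$ is injective, or more precisely that the only solution of the Lagrange system consistent with $\sum\beta_i = r-1$ and $\beta_i \in [0,1]$ is the symmetric one $\beta_i = \frac{r-1}{r}$ — here one uses that $g$ is strictly concave on a neighborhood of $\frac{r-1}{r}$ (compute $g''(x) = r(r-1)x^{r-2} - (r+1)r x^{r-1} = rx^{r-2}\bigl((r-1)-(r+1)x\bigr)$, which is negative for $x > \frac{r-1}{r+1}$, and $\frac{r-1}{r} > \frac{r-1}{r+1}$). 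Strict concavity of $g$ on the interval containing all coordinates, combined with the fixed sum $r-1$, forces equality of all $\beta_i$ by the standard smoothing/Jensen argument. Finally one evaluates $F$ at $\beta_i = \frac{r-1}{r}$: each term is $\frac{1}{r}\cdot\bigl(\frac{r-1}{r}\bigr)^r$, and summing $r$ of them gives exactly $\bigl(\frac{r-1}{r}\bigr)^r$, as claimed; checking the boundary facets $\beta_i\in\{0,1\}$ give strictly smaller values completes the "only if".

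**Main obstacle.** The delicate point is ruling out asymmetric boundary configurations — where several coordinates sit at $0$ or $1$ and the remaining ones satisfy a lower-dimensional version of the problem — and verifying that all such reduced configurations yield values strictly below $\bigl(\frac{r-1}{r}\bigr)^r$. I expect this to require either an induction on $r$ (solving the problem for $j < r$ active variables with a shifted constraint $\sum_{i=1}^j \beta_i \le j - 1 + (\text{slack from the pinned ones})$) or a careful direct comparison; pinning a variable at $1$ removes its (zero) contribution but also tightens the sum constraint on the rest in a way that must be tracked. Everything else is routine single-variable calculus and the standard strict-concavity-implies-equality argument.
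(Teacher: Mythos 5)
Your proposal is a plan rather than a proof, and the two places where you yourself flag difficulty are exactly where it breaks down. First, the boundary configurations (some $\beta_i$ pinned at $0$ or $1$, the rest subject to a shifted sum constraint) are never actually handled --- you only say you \emph{expect} an induction or a direct comparison to work. Note that pinning $\beta_j=1$ leaves $r-1$ variables but keeps the exponent $r$, so the reduced problem is not an instance of the lemma for $r-1$; the induction you gesture at would need a separate, stronger hypothesis. Second, and more seriously, the step that is supposed to close the interior-of-facet case is unsound as stated: you invoke ``strict concavity of $g$ on the interval containing all coordinates'' to force all $\beta_i$ equal, but $g(x)=(1-x)x^r$ has $g''(x)=rx^{r-2}\bigl((r-1)-(r+1)x\bigr)>0$ on $\bigl(0,\tfrac{r-1}{r+1}\bigr)$, so $g$ is \emph{convex} there and $g'$ is unimodal rather than monotone. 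Consequently the Lagrange system $g'(\beta_i)=\lambda$ on the hyperplane $\sum\beta_i=r-1$ genuinely admits two-valued solutions with one value below $\tfrac{r-1}{r+1}$ and one above, and nothing in your argument excludes them or shows they give a smaller value of $F$. (The paper itself has to fight exactly this battle when it runs a Lagrange analysis for the special case $r=4$ with three variables in Theorem \ref{thm_d_r}(iii), and even there it takes a page of explicit root-counting.)

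The paper's proof of the lemma avoids all of this with a one-line factorization: write $(1-\beta_i)\beta_i^r=\bigl[(1-\beta_i)\beta_i^{r-1}\bigr]\cdot\beta_i$, bound the bracket by the maximum of $f(x)=(1-x)x^{r-1}$ on $[0,\infty)$, which is $\tfrac1r\bigl(\tfrac{r-1}{r}\bigr)^{r-1}$ attained only at $x=\tfrac{r-1}{r}$, and then sum the resulting linear bound $\tfrac1r\bigl(\tfrac{r-1}{r}\bigr)^{r-1}\beta_i$ using $\sum\beta_i\le r-1$. The equality case falls out immediately from tracing when each inequality is tight. You should either adopt that argument or, if you want to keep the variational route, actually carry out the two-valued critical-point comparison and the facet reductions for general $r$; as written, the proof is incomplete.
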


\begin{proof}
If $r=1$, then $\beta_1=0$ and the inequality is obvious. Let $r \geq2$. We consider the function
$f(x)=(1-x)x^{r-1}$, $x\geq0$. From $f'(x)=x^{r-2}\big((r-1)-rx\big)$ we see that $f(x)$ attains its absolute maximum exactly when $x = \frac{r-1}{r}$ and thus
\begin{equation*}
f(x) \leq f\left(\frac{r-1}r\right)= \frac1r\left(\frac{r-1}r\right)^{r-1}.
\end{equation*}
Hence, we have
$$
(1-\beta_i)\beta_i^r=(1-\beta_i)\beta_i^{r-1}\beta_i\leq
\frac1r\left(\frac{r-1}r\right)^{r-1}\beta_i,\quad i=1,2,\dots,r.
$$
Now the condition $\beta_1+\beta_2+\cdots+\beta_r\leq r-1$ yields
$$
\sum_{i=1}^r(1-\beta_i)\beta_i^r
\leq\frac1r\left(\frac{r-1}r\right)^{r-1}(\beta_1+\beta_2+\cdots+\beta_r)
\leq\left(\frac{r-1}r\right)^r
$$
and the desired inequality holds.
Suppose now that we have equality in \eqref{ineq}. Then we have equality in all the above given inequalities and hence
$$
(1-\beta_i)\beta_i^{r-1}=\frac1r\left(\frac{r-1}r\right)^{r-1},\quad
i=1,2,\dots,r,
$$
implying thus
$\beta_1=\beta_2=\dots=\beta_r=\dfrac{r-1}r.$
\end{proof}

\begin{thm}\label{thm_d_r}
Let $G$ be a graph on $n$ vertices. Then, the following statements hold:\\[-5ex]
\begin{enumerate}
\item[(i)] For every integer $r \le \varphi(G)$, $\varphi(G) \ge \frac{n}{n-{\rm d_r}(G)}.$ Moreover, equality holds if and only if $G$ is an $\frac{n(\varphi(G)-1)} {\varphi(G)}$-regular graph.
\item[(ii)] $\varphi(G) \ge \frac{n}{n-{\rm d_3}(G)}$.
Moreover, equality holds if and only if $G$ is an $\frac{n(\varphi(G)-1)} {\varphi(G)}$-regular graph.
\item[(iii)] If $\varphi(G) \neq 2$, then $\varphi(G) \geq \frac n{n-{\rm d_4}(G)}$. Moreover, there exists a graph $G$  for which $\varphi(G)=2$ and $\varphi(G) < \frac n{n-{\rm d_4}(G)}$.
\end{enumerate}
\end{thm}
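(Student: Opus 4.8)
To prove part (i), I would start with an optimal partition $V_1, V_2, \ldots, V_t$ of $V(G)$ into $t = \varphi(G)$ small sets, writing $n_i = |V_i|$, so that $\deg(v) \le n - n_i$ for every $v \in V_i$. The target inequality $\varphi(G) \ge \frac{n}{n - {\rm d}_r(G)}$ is equivalent, after clearing denominators, to $n \cdot {\rm d}_r(G)^r \le \bigl(n - \frac{n}{t}\bigr)^r \cdot n$, i.e. to $\sum_{v \in V(G)} \deg^r(v) \le n \bigl(\frac{t-1}{t} n\bigr)^r$. I would bound the left side group by group: since $\deg(v) \le n - n_i$ on $V_i$, we get $\sum_{v \in V_i} \deg^r(v) \le n_i (n - n_i)^r$. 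So it suffices to show $\sum_{i=1}^t n_i (n - n_i)^r \le n \bigl(\frac{t-1}{t} n\bigr)^r$. Now set $\beta_i = 1 - \frac{n_i}{n} \in [0,1)$; then $n_i(n - n_i)^r = n(1-\beta_i) (n\beta_i)^r = n^{r+1}(1-\beta_i)\beta_i^r$, and $\sum_i \beta_i = t - \frac{n}{n} \cdot \frac{\sum n_i}{n}\cdots$ — more precisely $\sum_{i=1}^t \beta_i = t - 1$ since $\sum n_i = n$. This is exactly the hypothesis of Lemma \ref{lem_tool} with $r$ summands. Lemma \ref{lem_tool} gives $\sum_{i=1}^t (1-\beta_i)\beta_i^r \le \bigl(\frac{t-1}{t}\bigr)^r$, hence $\sum_i n_i(n-n_i)^r \le n^{r+1} \bigl(\frac{t-1}{t}\bigr)^r = n \bigl(\frac{t-1}{t} n\bigr)^r$, which is what we needed. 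The one subtlety: Lemma \ref{lem_tool} is stated for exactly $r$ numbers $\beta_1, \ldots, \beta_r$, and here we have $t$ of them with $t = \varphi(G)$; this is why the hypothesis $r \le \varphi(G)$ appears — I would apply the lemma with its "$r$" equal to $t$ and the exponent also needs care, so I should recheck that the lemma as stated (exponent $r$ tied to the number of terms) is what gets used, or pad the list with $\beta_i = 1$'s if $t > r$, noting that adding a term equal to $1$ contributes $0$ to the sum and keeps the constraint sum $\le (\text{number of terms}) - 1$ intact as long as the exponent matches. The equality analysis comes directly from the equality clause of Lemma \ref{lem_tool}: all $\beta_i$ equal $\frac{t-1}{t}$, i.e. all $n_i = \frac{n}{t}$, together with all the intermediate inequalities $\deg(v) = n - n_i$ being tight, forcing $G$ to be $\frac{n(t-1)}{t}$-regular; conversely such a regular graph achieves it by Observation \ref{obs_SLd} and Theorem \ref{thm_upper&lower-bounds}(v).

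**Part (ii).** This is the special case $r = 3$, but the point is that the restriction "$r \le \varphi(G)$" from (i) must be removed for $r = 3$. So I need to separately handle $\varphi(G) = 1$ and $\varphi(G) = 2$. If $\varphi(G) = 1$ then $V(G)$ is a small set, so every vertex has degree $\le n - n = 0$... no wait, a single small set of size $n$ forces $\deg(v) \le k = 0$, so $G$ is edgeless, ${\rm d}_3(G) = 0$, and $\frac{n}{n-0} = 1 = \varphi(G)$. If $\varphi(G) = 2$, I would verify the inequality $\sum_{i=1}^2 n_i(n-n_i)^3 \le n(\frac{n}{2})^3 \cdot 8 /8$... i.e. $n_1(n-n_1)^3 + n_2(n-n_2)^3 \le n \cdot \frac{n^3}{8} \cdot$ — concretely, with $n_1 + n_2 = n$, this is $n_1 n_2^3 + n_2 n_1^3 = n_1 n_2 (n_1^2 + n_2^2) \le \frac{n^4}{8}$; writing $n_1 n_2 = p$ and $n_1^2 + n_2^2 = n^2 - 2p$, we need $p(n^2 - 2p) \le \frac{n^4}{8}$, i.e. $2p^2 - n^2 p + \frac{n^4}{8} \ge 0$; the discriminant is $n^4 - n^4 = 0$, so this holds always, with equality iff $p = \frac{n^2}{4}$, i.e. $n_1 = n_2 = \frac{n}{2}$. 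So the case $r=3$, $\varphi(G)=2$ works by a direct two-variable computation, and combined with (i) for $\varphi(G) \ge 3$ we get the full claim.

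**Part (iii).** Here the restriction genuinely cannot be dropped at $r = 4$: for $\varphi(G) \ge 4$ apply (i) directly; for $\varphi(G) = 1$ the graph is edgeless and the inequality is trivially an equality; for $\varphi(G) = 3$, I would run the same two-step reduction as in (ii) but now with three blocks $n_1 + n_2 + n_3 = n$ and exponent $4$, reducing to showing $\sum_{i=1}^3 n_i(n-n_i)^4 \le n \cdot \frac{2^4}{3^4} n^4 = \frac{16}{81} n^5$; this is exactly Lemma \ref{lem_tool} with $r = 3$ terms but exponent $4$ — which the lemma as stated does not directly cover, so I would instead prove this three-variable inequality by Lagrange multipliers / direct optimization on the simplex $n_1 + n_2 + n_3 = n$, $n_i \ge 0$, checking the interior critical point $n_i = n/3$ and the boundary (which reduces to the $\varphi = 2$, $r = 4$ two-variable case). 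The main obstacle — and the reason the statement singles out $\varphi(G) = 2$ — is the counterexample: I would construct a graph with $\varphi(G) = 2$ for which $\frac{n}{n - {\rm d}_4(G)} > 2$, i.e. ${\rm d}_4(G) > \frac{n}{2}$. Take $G$ = two disjoint copies of $K_n$ plus a perfect matching between them, or more flexibly a graph on $2m$ vertices splitting into two parts of sizes $a$ and $2m - a$ each inducing a clique with no (or few) edges between them that keep $\varphi(G)=2$: with part sizes $n_1, n_2$, ${\rm d}_4(G)^4 = \frac{1}{n}\bigl(n_1(n_1-1)^4 + n_2(n_2-1)^4\bigr)$, and pushing $n_1$ close to $n$ makes this exceed $(\frac n2)^4$. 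I would pick explicit small values — say $n_1 = 1$, $n_2 = n-1$ giving essentially $K_{n-1}$ plus an isolated-ish vertex — and check numerically that ${\rm d}_4 > n/2$ while $\varphi = 2$, then present that single graph. The bulk of the work in (iii) is this construction plus the $\varphi(G) = 3$ optimization; the $\varphi(G) \ge 4$ case is immediate from (i).
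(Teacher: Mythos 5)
Your architecture is the paper's (same reduction to Lemma \ref{lem_tool}, same two-variable computation for $\varphi(G)=2$ in (ii), same Lagrange-multiplier analysis for $\varphi(G)=3$ in (iii)), but two steps fail as written. In part (i), you cannot apply Lemma \ref{lem_tool} the way you describe when $r<t=\varphi(G)$: the lemma bounds $\sum_{i=1}^{r}(1-\beta_i)\beta_i^{r}$ with the exponent tied to the number of terms, whereas you need to bound $\sum_{i=1}^{t}(1-\beta_i)\beta_i^{r}$ with $t$ terms and exponent $r\le t$. Padding with $\beta_i=1$ goes the wrong way --- it adds terms, which would only help if $t<r$ --- and applying the lemma ``with its $r$ equal to $t$'' bounds $\sum_i(1-\beta_i)\beta_i^{t}$, which is \emph{smaller} than $\sum_i(1-\beta_i)\beta_i^{r}$ since $\beta_i\le1$, so it yields no upper bound on the quantity you need. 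The missing ingredient is the one the paper uses: prove the inequality only for $r=\varphi(G)$, where the lemma applies verbatim and gives ${\rm d}_{\varphi}(G)\le\frac{n(\varphi-1)}{\varphi}$, and then invoke the power-mean monotonicity ${\rm d}_r(G)\le{\rm d}_{\varphi}(G)$ for $r\le\varphi(G)$ to get $\frac{n}{n-{\rm d}_r(G)}\le\frac{n}{n-{\rm d}_{\varphi}(G)}\le\varphi(G)$; the equality characterization also propagates through this chain. Without that step you would have to prove the more general $t$-term, exponent-$r$ inequality, which the lemma (and its proof) does not cover.

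The second gap is the counterexample in part (iii): both graphs you propose fail. Two copies of $K_n$ joined by a perfect matching is $n$-regular on $2n$ vertices, so ${\rm d}_4=n$ and $\frac{2n}{2n-n}=2=\varphi(G)$ --- an equality case, not a strict violation. And $K_{n-1}$ plus an isolated vertex has degree sequence $0,n-2,\dots,n-2$, so $S_0(G)=2$ and $\varphi(G)\ge n/2$; more generally, unbalancing two cliques destroys $\varphi(G)=2$ because the larger clique is no longer a small set, so ``pushing $n_1$ close to $n$'' is self-defeating. What you need is a graph with $\varphi(G)=2$ whose fourth-power mean is inflated by a \emph{single} vertex of high degree; the paper takes the star $K_{1,9}$, for which $\varphi=2$ (leaves plus center) while ${\rm d}_4=\sqrt[4]{(9+9^4)/10}=\sqrt[4]{657}>5$, whence $\frac{10}{10-{\rm d}_4}>2$. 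The remaining pieces of your plan (the $\varphi=1,2$ cases in (ii), and the interior/boundary optimization for $\varphi=3$ in (iii)) match the paper and are correct.
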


\begin{proof}
(i) Since ${\rm d_{r-1}}(G) \le {\rm d_r}(G)$ for all $r \le \varphi(G)$, it is enough to prove $\varphi(G) \ge \frac{n}{n-{\rm d}_{\varphi(G)}(G)}$. Let $\varphi(G) = \varphi$ and let $V(G) = V_1 \cup V_2 \cup \ldots \cup V_{\varphi}$ be a partition of $V(G)$ into small sets and let $n_i = |V_i|$, $1 \le i \le \varphi$. As $\deg(v) \le n - n_i$ for every $v \in V_i$ and $1 \le i \le \varphi$, we have
\begin{equation}\label{ineq2.1}
({\rm d_{\varphi}}(G))^{\varphi} n = \sum_{v \in V(G)} \deg^{\varphi}(v) = \sum_{i = 1}^{\varphi} \sum_{v \in V_i} \deg^{\varphi}(v) \le \sum_{i=1}^{\varphi} n_i (n-n_i)^{\varphi}.
\end{equation}
Setting $\beta_i = 1 - \frac{n_i}{n}$ for $1 \le i \le \varphi$, the inequality above can be rewritten as
\begin{equation}\label{ineq2.2}
({\rm d_\varphi}(G))^{\varphi} n = \sum_{v \in V(G)} \deg^{\varphi}(v) \le n^{\varphi+1} \sum_{i = 1}^{\varphi} (1 - \beta_i) \beta_i^{\varphi}.
\end{equation}
Since $\beta_1 + \beta_2 + \ldots + \beta_{\varphi} = {\varphi}-1$, Lemma \ref{lem_tool} yields ${\rm d_{\varphi}}(G) \le \frac{n ({\varphi}-1)}{\varphi}$, from which follows the desired inequality $\varphi(G) = \varphi \ge \frac{n}{n-{\rm d_{\varphi}}(G)}$. Hence we have proved
\begin{equation}\label{ineq2.3}
\varphi \ge \frac{n}{n-{\rm d}_{\varphi}} \ge \frac{n}{n-{\rm d}_{\varphi-1}(G)} \ge \ldots \ge \frac{n}{n-{\rm d}_{r}(G)}.
\end{equation}
for any $1 \le r \le \varphi(G)$.

Suppose now that we have $\varphi(G) = \frac{n}{n-{\rm d_r}(G)}$ for some $1 \le r \le \varphi = \varphi(G)$. Then, we have equality allover the inequality chain \eqref{ineq2.3}. In particular, $\varphi = \frac{n}{n-{\rm d}_{\varphi}(G)}$, which is equivalent to ${\rm d}_{\varphi} = \frac{n(\varphi -1)}{\varphi}$, and hence we have equality in \eqref{ineq2.1} and \eqref{ineq2.2}, too. From the equality in \eqref{ineq2.1}, it follows $\deg(v) = n- n_i$ for $v \in V_i$, $1 \le i \le \varphi$. From ${\rm d}_r=\frac{n(\varphi-1)}{\varphi}$ and the equality in \eqref{ineq2.2}, we see that in \eqref{ineq} there is equality, too.
Moreover, from Lemma \ref{lem_tool} it follows (with $r = \varphi$) that $\beta_i = \frac{\varphi-1}{\varphi}$ for $1 \le i \le {\varphi}$ and thus $n_ i = \frac{n}{\varphi}$ and $\varphi$ divides $n$. Hence, $\deg(v) = n -n_i = \frac{n({\varphi}-1)}{\varphi}$ for all $v \in V_i$ and $1 \le i \le {\varphi}$, turning out that $G$ is $\frac{n({\varphi}-1)}{\varphi}$-regular. Conversely, if $G$ is $\frac{n({\varphi}-1)}{\varphi}$-regular, then evidently ${\rm d_{\varphi}}(G) = \frac{n(\varphi-1)}{\varphi} = {\rm d_r}(G)$ for every $r \le \varphi$. Then from Theorem \ref{thm_upper&lower-bounds} we have $\varphi(G) = \left\lceil \frac{n}{n - \frac{n({\varphi}-1)}{\varphi}} \right\rceil = \frac{n}{n - \frac{n({\varphi}-1)}{\varphi}} = \frac{n}{n-{\rm d_r}(G)}$.\\
(ii) If $\varphi = \varphi(G) \ge 3$, then from item (i) we have $\varphi(G) \ge \frac{n}{n-{\rm d_3}(G)}$ with equality if and only if $G$ is $\frac{(\varphi-1)n}{\varphi}$-regular. It remains to consider the cases $\varphi(G) = 1$ and $\varphi(G) = 2$. Note that $\varphi(G) = 1$ holds if and only if $G = \ov{K_n}$. Hence, in this case ${\rm d_3}(G) = 0$ and $\varphi(G) =1 = \frac{n}{n-{\rm d_3}(G)}$. So assume that $\varphi(G) = 2$ and let $V(G) = V_1 \cup V_2$ be a partition of $V(G)$ into two small sets. Setting $|V_1| = n_1$ and $|V_2| = n_2 = n- n_1$, we have
$$\sum_{v \in V(G)} \deg^3(v) = \sum_{v \in V_1} \deg^3(v) + \sum_{v \in V_2} \deg^3(v) \le n_1(n-n_1)^3 + n_2(n-n_2)^3 = n_1n_2(n^2 - 2n_1n_2).$$
The last expression takes its maximum when $n_1n_2 = \frac{n^2}{4}$. Hence, it follows $\sum_{v \in V(G)} \deg^3(v) \le \frac{n^4}{8}$ and thus ${\rm d_3}(G) \le \frac{n}{2}$, which yields $\frac{n}{n- {\rm d_3}(G)} \le 2 = \varphi(G)$.\\
Now suppose that $\varphi(G) = 2 = \frac{n}{n-{\rm d_3}(G)}$. Then we have equality in the inequality given above. Hence, $n_1n_2 = \frac{n^2}{4}$ and $\deg(v) = n - n_i$ for $v \in V_1$, $i = 1, 2$. Therefore, $n_1 = n_2 = \frac{n}{2} = \frac{n (\varphi-1)}{\varphi}$ and $G$ is an $\frac{n (\varphi-1)}{\varphi}$-regular graph. On the other side, if $G$ is an $\frac{n}{2}$-regular graph, then ${\rm d_3}(G) = \frac{n}{2}$ and, from Theorem \ref{thm_upper&lower-bounds} (v), $\varphi(G) = 2$. Hence $\varphi(G) =  2 = \frac{n}{n - {\rm d_3}(G)}$.\\
(iii) The case $\varphi(G)=1$ is trivial. If $\varphi(G) \geq 4$, then
the statement follows from item (i). The case $\varphi(G) = 3$ can be proved by straightforward calculations using
Lagrange multipliers. As in the case (i), a partition of $V(G)$ into $\varphi(G) =3$ small sets $V_1, V_2, V_3$ with $|V_1| = n_1$, $|V_2|=n_2$ and $|V_3| = n_3$ leads to the inequality
\begin{equation*}
\left(\frac{{\rm d}_4(G)}{n} \right)^4 \le \sum_{i=1}^3(1-\beta_i)\beta_i^4 = f(\beta_1,\beta_2,\beta_3),
\end{equation*}
where $\beta_i = 1 - \frac{n_i}{n}$ and clearly $\beta_1+\beta_2+\beta_3=2$ and $\beta_i\in[0,1]$, for $i=1,2,3$. We will show that $ f(\beta_1,\beta_2,\beta_3) \le \left( \frac{2}{3}\right)^4$. Let
\begin{equation*}
F(\beta_1,\beta_2,\beta_3,\lambda)=\sum_{i=1}^3(1-\beta_i)\beta_i^4+\lambda(\beta_1+\beta_2+\beta_3-2)
\end{equation*}
be the Lagrange function. The extremal points are either solutions of the system
\begin{equation*}
\left|
\begin{array}{l}
\frac{\partial F}{\partial \beta_i}=4\beta_i^3-5\beta_i^4-\lambda=0,\quad i=1,2,3\\
\frac{\partial F}{\partial \lambda}=\beta_1+\beta_2+\beta_3-2=0\\
\end{array}
\right.
\end{equation*}
or they are points on the border.
We shall prove that the system has no solution in which $\beta_1,\beta_2,\beta_3$ are pairwise distinct. Let us suppose the contrary. Then $\beta_1,\beta_2,\beta_3$ are roots of
$g(x)=5x^4-4x^3+\lambda$. As $\beta_1+\beta_2+\beta_3=2$ from Vieta's formula follows that the fourth root of $g$ is $-\frac65$. Therefore $\lambda=-12\left(\frac65\right)^2$ and so $g(x)$ has only two real roots, which is a contradiction. Let $(\beta_1,\beta_2,\beta_3)$ be an extremal point which is not on the border. As $\beta_1,\beta_2,\beta_3$ are solutions of the system, we can suppose that $\beta_1=2\beta$ and $\beta_2=\beta_3=1-\beta$, where $\beta \in [0,\frac12]$. Then
\begin{equation*}
f(\beta_1,\beta_2,\beta_3)=f(\beta)=-30\beta^5+8\beta^4+12\beta^3-8\beta^2+2\beta
\end{equation*}
and
\begin{equation*}
f'(\beta)=-2(3\beta-1)(25\beta^3+3\beta^2-5\beta+1).
\end{equation*}
$f'$ has two real roots, $\frac13$ and another one negative. Therefore, $f$ attains its maximum $\left(\frac23\right)^4$ in $[0,\frac12]$ exactly when $\beta=\frac13$. It is easy to see that the maximum on the border is $\frac1{12}$, 
which is strictly smaller than $\left(\frac23\right)^4$. Hence, we have
$\left(\frac{{\rm d}_4(G)}{n}\right)^4 \le \left(\frac{2}{3}\right)^4 = \left( \frac{\varphi(G)-1}{\varphi(G)} \right),$
implying thus that $\varphi(G) \ge \frac{n}{n- {\rm d}_4(G)} $.\\
Consider now the graph $G=K_{1,9}$. It is clear that
$\varphi(G)=2$, ${\rm d_4}(G)=\sqrt[4]{657}>5$. Therefore $2 = \varphi(G)<\frac{10}{10-{\rm d_4}(G)}$.
\end{proof}

\begin{cor}\label{cor_d_r}
Let $G$ be a graph on $n$ vertices. Then, the following statements hold:\\[-5ex]
\begin{enumerate}
\item[(i)] For every integer $r \le \varphi(G)$, $\omega(G) \ge \frac{n}{n-{\rm d_r}(G)}$ and equality holds if and only if $G$ is a complete $\omega(G)$-partite Tur\'an graph $K_{\frac{n}{\omega(G)}, \frac{n}{\omega(G)}, \ldots, \frac{n}{\omega(G)}}$.
\item[(ii)] $\omega(G) \ge \frac{n}{n-{\rm d_3}(G)}$ and equality holds if and only if $G$ is a complete  $\omega(G)$-partite Tur\'an graph $K_{\frac{n}{\omega(G)}, \frac{n}{\omega(G)}, \ldots, \frac{n}{\omega(G)}}$.
\item[(iii)] If $\varphi(G) \neq 2$, then $\omega(G) \geq \frac n{n-{\rm d_4}(G)}$.
\end{enumerate}
\end{cor}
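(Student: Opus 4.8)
The plan is to derive all three parts from Theorem~\ref{thm_d_r} together with the inequality $\omega(G)\ge\varphi(G)$ of Theorem~\ref{thm_basic_ineq}. For the inequalities themselves this is immediate: in (i), Theorem~\ref{thm_d_r}(i) gives $\varphi(G)\ge\frac{n}{n-{\rm d_r}(G)}$ for every integer $r\le\varphi(G)$, so $\omega(G)\ge\varphi(G)\ge\frac{n}{n-{\rm d_r}(G)}$; in (ii) the same chaining with Theorem~\ref{thm_d_r}(ii) works and needs no restriction on $r$; and in (iii), Theorem~\ref{thm_d_r}(iii) already states $\varphi(G)\ge\frac{n}{n-{\rm d_4}(G)}$ whenever $\varphi(G)\neq2$ (having internally handled $\varphi(G)\in\{1,3\}$ and $\varphi(G)\ge4$ separately), so $\omega(G)\ge\varphi(G)\ge\frac{n}{n-{\rm d_4}(G)}$ and nothing further is needed there.

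The real work is the equality case of (i) and (ii). Suppose $\omega(G)=\frac{n}{n-{\rm d_r}(G)}$ (with $r\le\varphi(G)$ in case (i), and $r=3$ in case (ii)). From $\omega(G)\ge\varphi(G)\ge\frac{n}{n-{\rm d_r}(G)}=\omega(G)$ every inequality is an equality; in particular $\varphi(G)=\frac{n}{n-{\rm d_r}(G)}$, so the equality clause of Theorem~\ref{thm_d_r}(i) (resp.\ (ii)) forces $G$ to be $\frac{n(\varphi(G)-1)}{\varphi(G)}$-regular. Setting $\omega:=\omega(G)=\varphi(G)$, the graph $G$ is $\frac{n(\omega-1)}{\omega}$-regular and has clique number $\omega$; hence $\omega\mid n$, $G$ is $K_{\omega+1}$-free, and $e(G)=\tfrac12 n\cdot\frac{n(\omega-1)}{\omega}=\frac{n^2(\omega-1)}{2\omega}$, which is exactly the number of edges of the balanced complete $\omega$-partite graph $K_{n/\omega,\dots,n/\omega}$. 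By the uniqueness (extremal-graph) part of Tur\'an's Theorem~\cite{Tur}, $G$ must be this Tur\'an graph. For the converse I would start from $G=K_{n/\omega,\dots,n/\omega}$ with $\omega=\omega(G)$: it is $\frac{n(\omega-1)}{\omega}$-regular, so ${\rm d_r}(G)=\frac{n(\omega-1)}{\omega}$ for every $r$, and Theorem~\ref{thm_upper&lower-bounds}(v) with $k=0$ gives $\varphi(G)=\big\lceil\frac{n}{n-n(\omega-1)/\omega}\big\rceil=\omega$ (so in particular the hypothesis $r\le\varphi(G)$ of (i) holds). Then $\frac{n}{n-{\rm d_r}(G)}=\frac{n}{n/\omega}=\omega=\omega(G)$, giving the claimed equality in (i) and (ii).

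The main obstacle is the last step of the forward equality argument: deducing that a $\frac{n(\omega-1)}{\omega}$-regular graph of clique number $\omega$ is exactly the balanced complete $\omega$-partite graph. This needs the characterization of the extremal graphs in Tur\'an's theorem, not merely its edge bound, and the regularity of $G$ is precisely what produces the extremal edge count that makes this characterization applicable. Everything else is routine bookkeeping: invoking the two cited theorems and checking that the Tur\'an graph has the stated degree, clique number, and $\varphi$-value.
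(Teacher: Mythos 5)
Your proposal is correct and follows essentially the same route as the paper: chain $\omega(G)\ge\varphi(G)\ge\frac{n}{n-{\rm d_r}(G)}$ via Theorems \ref{thm_basic_ineq} and \ref{thm_d_r}, and in the equality case use the regularity forced by Theorem \ref{thm_d_r} to pin down $e(G)=\frac{n^2(\omega-1)}{2\omega}$ and invoke the uniqueness part of Tur\'an's Theorem. Your write-up is in fact slightly more careful than the paper's (noting $\omega\mid n$ and verifying $r\le\varphi(G)$ in the converse), but the argument is the same.
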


\begin{proof}
(i) From Theorems \ref{thm_basic_ineq} and \ref{thm_d_r}(i), we have
$\omega(G)\geq \varphi(G) \ge \frac n{n-{\rm d_r}(G)}$. Suppose now that $\omega(G) = \frac n{n-{\rm d_r}(G)}$.
Then we have equality in Theorem \ref{thm_d_r}(i). Thus, setting $\varphi(G) = \omega(G) = \omega$, $G$ is $\frac{n(\omega-1)}{\omega}$-regular and $e(G) = \frac{n^2(\omega-1)}{2\omega}$. Since $\omega(G) = \omega$, from Tur\'an's Theorem it follows that $G$ is a complete $\omega$-chromatic regular graph, i.e. $G$ is a complete $\omega$-partite Tur\'an graph $K_{\frac{n}{\omega}, \frac{n}{\omega},\ldots, \frac{n}{\omega}}$. Conversely, if $G$ is the complete $\omega$-partite Tur\'an graph $K_{\frac{n}{\omega}, \frac{n}{\omega},\ldots, \frac{n}{\omega}}$, then evidently $d_r(G) = \frac{n(\omega-1)}{\omega}$  and hence $\omega(G) = \omega=  \frac{n}{n - {\rm d_r}(G)}$.\\
(ii) From Theorems \ref{thm_basic_ineq} and \ref{thm_d_r}(ii), we have
$\omega(G)\geq \varphi(G) \ge \frac n{n-{\rm d_3}(G)}$. Suppose now that $\omega = \omega(G) = \frac n{n-{\rm d_3}(G)}$.
Then $\varphi(G) = \frac n{n-{\rm d_3}(G)}$, i.\,e. we have equality in Theorem \ref{thm_d_r}(ii). Thus, setting $\varphi(G) = \omega(G) = \omega$, $G$ is $\frac{n(\omega-1)}{\omega}$-regular and $e(G) = \frac{n^2(\omega-1)}{2\omega}$. Since $\omega(G) = \omega$, from Tur\'an's Theorem it
follows that $G$ is a complete $\omega$-chromatic regular graph, i.e. $G = K_{\frac{n}{\omega}, \frac{n}{\omega},\ldots, \frac{n}{\omega}}$. Conversely, if $G$ is the complete $\omega$-partite Tur\'an graph $K_{\frac{n}{\omega}, \frac{n}{\omega},\ldots, \frac{n}{\omega}}$, then evidently $d_3(G) = \frac{n(\omega-1)}{\omega}$ and hence $\omega(G) = \omega=  \frac{n}{n - {\rm d_3}(G)}$.\\
(iii) This follows from Theorems \ref{thm_basic_ineq} and \ref{thm_d_r}(iii).
\end{proof}

Note that Theorem \ref{thm_d_r}(ii) improves the bound $\varphi(G) \ge \frac{n}{n - d_2(G)}$ given in \cite{BoNe} and Corollary \ref{cor_d_r}(ii) is better than the inequality $\omega(G) \ge \frac{n}{n - d_2(G)}$, given in \cite{EdEl} and later in \cite{BoNe} where the proof was corrected.\\

Since $\alpha(G) = \omega(\ov{G})$ and $\Omega(G) = \varphi(\ov{G})$, we have the following corollaries.

\begin{cor}
Let $G$ be a graph on $n$ vertices. Then, the following statements hold:\\[-5ex]
\begin{enumerate}
\item[(i)] For every integer $r \le \Omega(G)$, $\Omega(G) \ge \frac{n}{n-{\rm d_r}(\ov{G})}.$ Moreover, equality holds if and only if  $G$ is an $(\frac{n}{\Omega(G)}-1)$-regular graph.
\item[(ii)] $\Omega(G) \ge \frac{n}{n-d_3(\ov{G})}$.
Moreover, equality holds if and only if $G$ is an $(\frac{n}{ \Omega(G)}-1)$-regular graph.
\item[(iii)] If $\Omega(G) \neq 2$, then $\Omega(G) \geq \frac n{n-d_4(\ov{G})}$. Moreover, there exists a graph $G$  for which $\varphi(G)=2$ and $\Omega(G) < \frac n{n-d_4(\ov{G})}$.\\
\end{enumerate}
\end{cor}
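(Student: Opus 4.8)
The plan is to obtain all three parts by passing to the complement and invoking Theorem~\ref{thm_d_r}, in exactly the way Corollary~\ref{cor_d_r} was deduced from Theorems~\ref{thm_basic_ineq} and~\ref{thm_d_r}. The single fact that drives everything is Observation~\ref{obs1}(iv), namely $\Omega(G)=\varphi(\ov{G})$. So I would fix an integer $r$ with $r\le\Omega(G)=\varphi(\ov{G})$ and apply Theorem~\ref{thm_d_r}(i) to the graph $\ov{G}$; this gives at once $\Omega(G)=\varphi(\ov{G})\ge\frac{n}{n-{\rm d}_r(\ov{G})}$, which is part~(i). The inequality in~(ii) is the same step using Theorem~\ref{thm_d_r}(ii) for $\ov{G}$, and the inequality in~(iii) follows from Theorem~\ref{thm_d_r}(iii) for $\ov{G}$, once one notes that the hypothesis $\Omega(G)\neq 2$ is literally the hypothesis $\varphi(\ov{G})\neq 2$.

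For the equality characterisations in~(i) and~(ii), I would unwind the equality condition supplied by Theorem~\ref{thm_d_r}: equality there holds for the graph $\ov{G}$ exactly when $\ov{G}$ is $\frac{n(\varphi(\ov{G})-1)}{\varphi(\ov{G})}$-regular, that is, $\frac{n(\Omega(G)-1)}{\Omega(G)}$-regular. Since $\ov{G}$ is $d$-regular if and only if $G$ is $(n-1-d)$-regular, and since $n-1-\frac{n(\Omega(G)-1)}{\Omega(G)}=\frac{n}{\Omega(G)}-1$, this is equivalent to $G$ being $\bigl(\frac{n}{\Omega(G)}-1\bigr)$-regular, which is precisely the stated condition.

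For the sharpness claim in~(iii), I would complement the example used in the proof of Theorem~\ref{thm_d_r}(iii). There $H=K_{1,9}$ satisfies $\varphi(H)=2$ and $\varphi(H)<\frac{10}{10-{\rm d}_4(H)}$ with ${\rm d}_4(H)=\sqrt[4]{657}$. Taking $G=\ov{H}=K_9\cup K_1$, Observation~\ref{obs1}(iv) gives $\Omega(G)=\varphi(\ov{G})=\varphi(H)=2$, while ${\rm d}_4(\ov{G})={\rm d}_4(H)=\sqrt[4]{657}$, so $\Omega(G)=2<\frac{n}{n-{\rm d}_4(\ov{G})}$, as required.

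Since the whole statement is a term-by-term translation of Theorem~\ref{thm_d_r} through the complementation duality, no real obstacle arises. The only step that deserves to be written out with any care is the elementary degree bookkeeping of the second paragraph, converting regularity of $\ov{G}$ into regularity of $G$; everything else is a direct substitution into an already-proved theorem.
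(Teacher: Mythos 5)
Your proposal is correct and is exactly the paper's route: the paper derives this corollary with no further argument beyond the remark that $\Omega(G)=\varphi(\ov{G})$, i.e.\ by applying Theorem~\ref{thm_d_r} to $\ov{G}$ and translating the regularity condition through $n-1-\frac{n(\Omega(G)-1)}{\Omega(G)}=\frac{n}{\Omega(G)}-1$, just as you do. Your reading of the sharpness example in part~(iii) as requiring $\Omega(G)=2$ (rather than the $\varphi(G)=2$ printed in the statement, which is evidently a leftover from Theorem~\ref{thm_d_r}(iii)) is the intended one, and your witness $G=\ov{K_{1,9}}$ is the correct complemented example.
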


\begin{cor}
Let $G$ be a graph on $n$ vertices. Then, the following statements hold:\\[-5ex]
\begin{enumerate}
\item[(i)] For every integer $r \le \Omega(G)$, $\alpha(G) \ge \frac{n}{n-{\rm d_r}(\ov{G})}$ and equality holds if and only if $G$ is the union of $\alpha(G)$ copies of $K_{\frac{n}{\alpha(G)}}$.
\item[(ii)] $\alpha(G) \ge \frac{n}{n-d_3(\ov{G})}$ and equality holds if and only if $\alpha(G)$ copies of $K_{\frac{n}{\alpha(G)}}$.
\item[(iii)] If $\Omega(G) \neq 2$, then $\alpha(G) \geq \frac n{n-d_4(\ov{G})}$.
\end{enumerate}
\end{cor}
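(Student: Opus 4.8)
The plan is to derive all three items at once by applying Corollary \ref{cor_d_r} to the complement graph $\ov{G}$ and translating the conclusions back to $G$ via the standard complementation identities. Recall that $\alpha(G) = \omega(\ov{G})$, that $\Omega(G) = \varphi(\ov{G})$ by Observation \ref{obs1}(iv), and that the quantity ${\rm d}_r(\ov{G})$ occurring in the statement is already the $r$-th degree-power mean of $\ov{G}$, so no further manipulation of the degree data is required.

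For item (i), I would fix an integer $r \le \Omega(G) = \varphi(\ov{G})$ and invoke Corollary \ref{cor_d_r}(i) for the graph $\ov{G}$; this yields $\alpha(G) = \omega(\ov{G}) \ge \frac{n}{n - {\rm d}_r(\ov{G})}$, with equality exactly when $\ov{G}$ is the balanced complete $\omega(\ov{G})$-partite Tur\'an graph $K_{n/\alpha(G),\dots,n/\alpha(G)}$. The only non-formal point is to observe that the complement of this Tur\'an graph is the disjoint union of $\alpha(G)$ cliques each of order $n/\alpha(G)$ (complementing a complete multipartite graph turns each part into a clique and removes all edges between parts), and conversely; hence the equality case translated to $G$ reads precisely ``$G$ is the union of $\alpha(G)$ copies of $K_{n/\alpha(G)}$''. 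Item (ii) is then the specialisation to $r=3$: since $3 \le \varphi(\ov{G})$ is \emph{not} assumed, I would instead quote Corollary \ref{cor_d_r}(ii) for $\ov{G}$ directly, which holds with no restriction, obtaining $\alpha(G) \ge \frac{n}{n - {\rm d}_3(\ov{G})}$ together with the same equality characterisation through the complement-of-Tur\'an observation. Item (iii) follows in the same manner from Corollary \ref{cor_d_r}(iii) applied to $\ov{G}$: the hypothesis there is $\varphi(\ov{G}) \ne 2$, which by Observation \ref{obs1}(iv) is exactly $\Omega(G) \ne 2$, and the conclusion $\omega(\ov{G}) \ge \frac{n}{n - {\rm d}_4(\ov{G})}$ becomes $\alpha(G) \ge \frac{n}{n - {\rm d}_4(\ov{G})}$.

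There is essentially no obstacle beyond bookkeeping; the one place to be careful is the equality analysis, where one must correctly identify the complement of a balanced complete multipartite graph as a disjoint union of equal cliques (and conversely), so that the characterisation ends up stated for $G$ rather than for $\ov{G}$.
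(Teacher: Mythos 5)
Your proposal is correct and coincides with the paper's own (essentially unwritten) argument: the paper introduces this corollary with the single remark that it follows from $\alpha(G) = \omega(\ov{G})$ and $\Omega(G) = \varphi(\ov{G})$ applied to Corollary \ref{cor_d_r}, which is precisely your derivation, including the observation that the complement of the balanced Tur\'an graph $K_{n/\alpha(G),\dots,n/\alpha(G)}$ is the disjoint union of $\alpha(G)$ copies of $K_{n/\alpha(G)}$.
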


\section{Variations of small and large sets}

Let $G$ be a graph on $n$ vertices and $A$ a subset of $V(G)$.  We call $A$ \emph{$\alpha$-small} if $\sum_{v \in A} \frac{1}{n-\deg(v)} \le 1$ and \emph{$\beta$-small} if ${\rm d}(A) \le n - |A|$. Now we observe the following.

\begin{obs}\label{obs_alpha-beta}
In a graph $G$, every small set is an $\alpha$-small set and every $\alpha$-small set is a $\beta$-small set.
\end{obs}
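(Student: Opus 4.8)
The plan is to prove the two implications separately, both by direct computation from the definitions. For the first implication, suppose $A$ is a small set in $G$, so that $\deg(v) \le n - |A|$ for every $v \in A$. Then for each $v \in A$ we have $n - \deg(v) \ge |A| > 0$, hence $\frac{1}{n-\deg(v)} \le \frac{1}{|A|}$, and summing over the $|A|$ vertices of $A$ gives $\sum_{v \in A} \frac{1}{n-\deg(v)} \le |A| \cdot \frac{1}{|A|} = 1$, so $A$ is $\alpha$-small. (One should note in passing that small sets have $\deg(v) \le n - |A| \le n-1$, so $n - \deg(v) \ge 1 > 0$ and no division by zero occurs; more generally an $\alpha$-small set must avoid any vertex with $\deg(v) = n$, which is automatic for a vertex lying in a set of size $\ge 1$ together with itself — but this subtlety only matters for the definition of $\alpha$-small to make sense.)

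For the second implication, suppose $A$ is $\alpha$-small, i.e. $\sum_{v \in A} \frac{1}{n-\deg(v)} \le 1$. Write $|A| = a$ and apply the AM--HM inequality (equivalently, the Cauchy--Schwarz inequality) to the positive reals $n - \deg(v)$, $v \in A$:
\begin{equation*}
\frac{1}{a}\sum_{v\in A}(n-\deg(v)) \;\ge\; \frac{a}{\sum_{v\in A}\frac{1}{n-\deg(v)}} \;\ge\; a,
\end{equation*}
where the last step uses the $\alpha$-small hypothesis $\sum_{v\in A}\frac{1}{n-\deg(v)}\le 1$. Rearranging, $\frac{1}{a}\sum_{v\in A}(n-\deg(v)) \ge a$, i.e. $n - {\rm d}(A) \ge a = |A|$, which says exactly that ${\rm d}(A) \le n - |A|$, so $A$ is $\beta$-small.

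The only real content here is choosing the right classical inequality in the second step; the AM--HM inequality applied to the quantities $n - \deg(v)$ is the natural bridge between a bound on the sum of reciprocals and a bound on the average. I expect no genuine obstacle — the main thing to be careful about is that all the $n - \deg(v)$ are strictly positive so that the harmonic mean is defined, which is guaranteed as soon as $A$ contains no vertex adjacent to all others; and indeed the $\alpha$-small condition implicitly requires $\deg(v) < n$ for $v \in A$ for the defining sum to be finite, so this is not an extra assumption.
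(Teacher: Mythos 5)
Your proposal is correct and follows essentially the same route as the paper: the first implication is the identical termwise bound $\frac{1}{n-\deg(v)}\le\frac{1}{|A|}$, and your AM--HM step for the second implication is exactly the paper's application of Jensen's inequality to the convex function $x\mapsto 1/x$ on the values $n-\deg(v)$. No gaps; the positivity remark is harmless since $\deg(v)\le n-1$ always holds in a simple graph.
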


\begin{proof}
If $A$ is a small set of $G$, then $n - \deg(v) \ge |A|$ for every vertex $v \in A$ and we have $\sum_{v \in A}\frac{1}{n - \deg(v)} \le \sum_{v \in A} \frac{1}{|A|} = 1$. Hence, $A$ is an $\alpha$-small set. Further, if $A$ is an $\alpha$-small set of $G$, then $1 \ge \sum_{v \in A} \frac{1}{n -\deg(v)} \ge  \frac{|A|}{n - {\rm d}(A)}$ by Jensen's inequality and hence ${\rm d}(A) \le n- |A|$ and thus $A$ is a $\beta$-small set.
\end{proof}

Let now $\varphi^{\alpha}(G)$ and $\varphi^{\beta}(G)$ be the minimum number of $\alpha$-small sets and, respectively, $\beta$-small sets in which $V(G)$ can be partitioned. Further, let $CW(G) = \sum_{v \in V(G)} \frac{1}{\deg(v) +1}$ be the Caro-Wei bound.\\

 \begin{thm}\label{thm_ineq-chains}
 Let $G$ be a graph on $n$ vertices. Then\\[-5ex]
 \begin{enumerate}
 \item[(i)] $\omega(G) \ge \varphi(G) \ge \varphi^{\alpha}(G) \ge \varphi^{\beta}(G) \ge \left\lceil \frac{n}{n- {\rm d(G)}} \right\rceil$;
 \item[(ii)] $\omega(G) \ge \varphi(G) \ge \varphi^{\alpha}(G) \ge CW(\ov{G})  \ge \left\lceil \frac{n}{n- {\rm d(G)}}\right\rceil$.
  \end{enumerate}
 \end{thm}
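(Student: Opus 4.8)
The plan is to prove the two chains by establishing each inequality separately, reusing earlier results wherever possible. Many of the links are either already proven or follow from the nesting of the set classes in Observation 6.2. First I would handle the common prefix $\omega(G) \ge \varphi(G)$, which is exactly Theorem \ref{thm_basic_ineq}. Next, the inequality $\varphi(G) \ge \varphi^{\alpha}(G)$ follows immediately from Observation \ref{obs_alpha-beta}: every small set is $\alpha$-small, so any partition of $V(G)$ into $\varphi(G)$ small sets is in particular a partition into $\alpha$-small sets, whence $\varphi^{\alpha}(G) \le \varphi(G)$. Similarly $\varphi^{\alpha}(G) \ge \varphi^{\beta}(G)$, since every $\alpha$-small set is $\beta$-small. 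This disposes of all the ``easy'' links in (i).

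For the last link of (i), $\varphi^{\beta}(G) \ge \lceil n/(n-{\rm d}(G))\rceil$, I would argue as in the proof of Theorem \ref{thm_classical-Omega-phi}(i): take an optimal partition $V_1 \cup \cdots \cup V_t$ of $V(G)$ into $t = \varphi^{\beta}(G)$ $\beta$-small sets, so that $\sum_{v \in V_i}\deg(v) = |V_i|\,{\rm d}(V_i) \le |V_i|(n - |V_i|)$ for each $i$. Summing over $i$ gives $n\,{\rm d}(G) = \sum_{v}\deg(v) \le \sum_i |V_i|(n-|V_i|) = n^2 - \sum_i |V_i|^2 \le n^2 - n^2/t$ by the Cauchy--Schwarz (or power-mean) inequality $\sum_i |V_i|^2 \ge (\sum_i |V_i|)^2/t = n^2/t$. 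Rearranging yields $t \ge n/(n-{\rm d}(G))$, and since $t$ is an integer the ceiling follows.

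For chain (ii), the only genuinely new inequalities are $\varphi^{\alpha}(G) \ge CW(\ov{G})$ and $CW(\ov{G}) \ge \lceil n/(n-{\rm d}(G))\rceil$. For the latter, note $CW(\ov{G}) = \sum_{v}\frac{1}{\deg_{\ov{G}}(v)+1} = \sum_{v}\frac{1}{n-\deg_G(v)}$, and Jensen's inequality applied to the convex function $x \mapsto 1/x$ gives $\sum_{v}\frac{1}{n-\deg_G(v)} \ge \frac{n}{n-{\rm d}(G)}$; taking the ceiling (an integer lower bound for the integer $\varphi^{\alpha}(G)$, passed through the first inequality) completes it. The main obstacle is the inequality $\varphi^{\alpha}(G) \ge CW(\ov{G})$, which is the analogue of Theorem \ref{thm_classical-Omega-phi}(i) but now for the weaker class of $\alpha$-small sets. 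Here I would take an optimal partition $V_1 \cup \cdots \cup V_t$ into $\alpha$-small sets, $t = \varphi^{\alpha}(G)$, and use directly the defining inequality $\sum_{v \in V_i}\frac{1}{n-\deg(v)} \le 1$ for each $i$; summing over $i$ gives $\sum_{v \in V(G)}\frac{1}{n-\deg(v)} \le t$, and the left side is precisely $CW(\ov{G})$. This is in fact cleaner than the small-set case because the bound $\sum_{v \in V_i} 1/(n-\deg(v)) \le 1$ is built into the definition of $\alpha$-small, so no per-set size estimate is needed. Finally I would assemble the two chains, noting that the shared links $\omega(G)\ge\varphi(G)\ge\varphi^{\alpha}(G)$ appear in both.
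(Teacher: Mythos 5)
Your proposal is correct and follows essentially the same route as the paper: the chain $\omega(G)\ge\varphi(G)\ge\varphi^{\alpha}(G)\ge\varphi^{\beta}(G)$ from Theorem \ref{thm_basic_ineq} and the nesting of set classes, the bound $\varphi^{\beta}(G)\ge n/(n-{\rm d}(G))$ by summing degrees over a $\beta$-small partition (your Cauchy--Schwarz step is the paper's Jensen step), and $\varphi^{\alpha}(G)\ge CW(\ov{G})$ by summing the defining inequality of $\alpha$-small sets. The only cosmetic difference is that the paper invokes Corollary \ref{cor_avdeg}(i) for $CW(\ov{G})\ge n/(n-{\rm d}(G))$ where you reprove it directly via Jensen.
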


 \begin{proof}
Since every small set is an $\alpha$-small set and every $\alpha$-small set is a $\beta$-small set and because of Theorem \ref{thm_basic_ineq}, we have the inequality chain $\omega(G) \ge \varphi(G) \ge \varphi^{\alpha}(G) \ge \varphi^{\beta}(G)$. Now we will prove the remaining bounds.\\
(i) Let $t = \varphi^{\beta}(G)$ and let $V(G) = A_1 \cup A_2 \cup \ldots \cup A_t$ be a partition of $V(G)$ into $\beta$-small sets. Then, using the definition of $\beta$-small set and Jensen's inequality, we obtain
\begin{eqnarray*}
n {\rm d}(G) = 2 e(G) = \sum_{v \in V(G)} \deg(v) = \sum_{i = 1}^t \sum_{v \in A_i} \deg(v)
                                     \le \sum_{i=1}^t (n - |A_i|) |A_i| \le n \left(n - \frac{n}{t} \right).
\end{eqnarray*}
Hence ${\rm d}(G) \le n - \frac{n}{t} = n - \frac{n}{\varphi^{\beta}(G)}$, which is equivalent to $\varphi^{\beta}(G) \ge \frac{n}{n - {\rm d}(G)}$. \\
(ii) Let $V(G) = A_1 \cup A_2 \cup \ldots \cup A_t$ be a partition of $V(G)$ into $t = \varphi^{\alpha}(G)$ $\alpha$-small sets. Then, Corollary \ref{cor_avdeg}(i) and the definition of $\alpha$-small set yield
$$\frac{n}{n- {\rm d(G)}} \le CW(\ov{G}) = \sum_{v \in V(G)} \frac{1}{n - \deg(v)} = \sum_{i=1}^t \sum_{v \in A_i} \frac{1}{n-\deg(v)} \le t = \varphi^{\alpha}(G).$$
 \end{proof}

Let us consider an example. Let $G$ be a graph obtained from $2K_n$ by joining one of the vertices of the first copy of $K_n$ to all the vertices of the second copy of $K_n$. Then $\varphi(G) = 3$, $CW(\ov{G}) = 3 - \frac{2}{n+1}$ and $\varphi^{\beta}(G) = 2$. In this case $\varphi^{\beta}(G) \le CW(\ov{G})$. We do not know if $\varphi^{\beta}(G) \le CW(\ov{G})$ is always true. \\

The inequality chains given in Theorem \ref{thm_ineq-chains}(i) and (ii) together with the fact that $2e(G) = n {\rm d}(G)$ lead to the following corollary.

 \begin{cor}\label{cor_edges_2}
  Let $G$ be a graph on $n$ vertices. Then\\[-5ex]
  \begin{enumerate}
   \item[(i)] $e(G) \le \frac{(\varphi^{\beta}(G)  -1)n^2}{2 \varphi^{\beta}(G)} \le \frac{(\varphi^{\alpha}(G) -1) n^2}{2 \varphi^{\alpha}(G)} \le \frac{(\varphi(G) -1) n^2}{2 \varphi(G)} \le \frac{(\omega(G) -1) n^2}{2 \omega(G)}$;
 \item[(ii)] $e(G) \le \frac{(CW(\ov{G})  -1)n^2}{2 CW(\ov{G})} \le \frac{(\varphi^{\alpha}(G) -1) n^2}{2 \varphi^{\alpha}(G)} \le \frac{(\varphi(G) -1) n^2}{2 \varphi(G)} \le \frac{(\omega(G) -1) n^2}{2 \omega(G)}$.
 \end{enumerate}
 \end{cor}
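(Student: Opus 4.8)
The plan is to deduce everything from the two inequality chains of Theorem \ref{thm_ineq-chains} by exploiting the monotonicity of the single-variable function $h(x) = \frac{(x-1)n^2}{2x} = \frac{n^2}{2}\left(1 - \frac{1}{x}\right)$, which is strictly increasing for $x>0$. Theorem \ref{thm_ineq-chains}(i) gives $\varphi^{\beta}(G) \le \varphi^{\alpha}(G) \le \varphi(G) \le \omega(G)$ and Theorem \ref{thm_ineq-chains}(ii) gives $CW(\ov{G}) \le \varphi^{\alpha}(G) \le \varphi(G) \le \omega(G)$; applying $h$ to these chains of positive reals immediately yields the second, third and fourth inequalities in both parts (i) and (ii) of the corollary.

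It then remains to establish the leftmost inequalities $e(G) \le h(\varphi^{\beta}(G))$ and $e(G) \le h(CW(\ov{G}))$. For the first, I would start from the last bound in Theorem \ref{thm_ineq-chains}(i), namely $\varphi^{\beta}(G) \ge \left\lceil \frac{n}{n-{\rm d}(G)} \right\rceil \ge \frac{n}{n-{\rm d}(G)}$, which rearranges to ${\rm d}(G) \le n - \frac{n}{\varphi^{\beta}(G)}$. Multiplying by $n$ and using $n\,{\rm d}(G) = 2e(G)$ gives $2e(G) \le n^2\left(1 - \frac{1}{\varphi^{\beta}(G)}\right)$, i.e. $e(G) \le \frac{(\varphi^{\beta}(G)-1)n^2}{2\varphi^{\beta}(G)}$. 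Running the identical computation from the last bound in Theorem \ref{thm_ineq-chains}(ii), $CW(\ov{G}) \ge \frac{n}{n-{\rm d}(G)}$, produces $e(G) \le \frac{(CW(\ov{G})-1)n^2}{2CW(\ov{G})}$, which is the leftmost inequality of chain (ii).

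There is essentially no hard step: the whole argument is the observation that $h$ is increasing together with one line of algebra converting a bound ${\rm d}(G) \le n - n/x$ into $e(G) \le h(x)$. The only point needing a moment's care is that the leftmost terms of the corollary involve the bare fractions $\frac{n}{n-{\rm d}(G)}$ rather than their ceilings, so one must observe that Theorem \ref{thm_ineq-chains} in fact delivers the stronger lower bound by $\left\lceil \frac{n}{n-{\rm d}(G)} \right\rceil$, which a fortiori bounds the fraction itself; this is exactly what licenses the rearrangement above. I would also note in passing that all the quantities appearing lie in the increasing range of $h$, since $\varphi^{\beta}(G) \ge 1$ and $CW(\ov{G}) \ge 1$ for any nonempty graph $G$.
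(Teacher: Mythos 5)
Your proposal is correct and follows essentially the same route as the paper, which derives the corollary in one line from the inequality chains of Theorem \ref{thm_ineq-chains} together with $2e(G) = n\,{\rm d}(G)$; you have merely made explicit the monotonicity of $x \mapsto \frac{(x-1)n^2}{2x}$ and the rearrangement of ${\rm d}(G) \le n - n/x$ into $e(G) \le \frac{(x-1)n^2}{2x}$, both of which are exactly what the paper's remark relies on.
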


As remarked for Corollary \ref{cor_edges}, the above bounds on $e(G)$ are better than the bound $e(G)\leq\frac{n^2(\omega(G)-1)}{2\omega(G)}$ from classical Tur\'an's Theorem, because $\omega (G) \geq \varphi(G) \ge \varphi^{\alpha}(G) \ge CW(\ov{G})$ and $\varphi^{\alpha}(G) \ge \varphi^{\beta}(G)$. \\

Analogous to $\alpha$-small and $\beta$-small sets, we can define $\alpha$-large and $\beta$-large sets. Let $G$ be a graph on $n$ vertices and $B$ a subset of $V(G)$. $B$ will be called \emph{$\alpha$-large} if $\sum_{v \in B} \frac{1}{\deg(v)+1} \le 1$ and \emph{$\beta$-large} if ${\rm d}(B) \ge |B| -1$. As for small sets, every large set is an $\alpha$-large set and every $\alpha$-large set is a $\beta$-large set. We also define $\Omega^{\alpha}(G)$ and $\Omega^{\beta}(G)$ as the minimum number of $\alpha$-large sets and, respectively, $\beta$-large sets in which $V(G)$ can be partitioned.

Theorem \ref{thm_ineq-chains} and Corollary \ref{cor_edges} yield, together with the known facts that $\alpha(G) = \omega(\ov{G})$, $\Omega(G) = \varphi(\ov{G})$, $\Omega^{\alpha}(G) = \varphi^{\alpha}(\ov{G})$ and $\Omega^{\beta}(G) = \varphi^{\beta}(\ov{G})$, the following corollaries.\\

  \begin{cor}
 Let $G$ be a graph on $n$ vertices. Then\\[-5ex]
 \begin{enumerate}
 \item[(i)] $\alpha(G) \ge \Omega(G) \ge \Omega^{\alpha}(G) \ge \Omega^{\beta}(G) \ge \frac{n}{{\rm d(G)}+1}$;
 \item[(ii)] $\alpha(G) \ge \Omega(G) \ge \Omega^{\alpha}(G) \ge CW(G) \ge \frac{n}{{\rm d(G)}+1}$;\\
  \end{enumerate}
 \end{cor}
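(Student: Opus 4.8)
The plan is to derive both chains of the corollary by applying Theorem~\ref{thm_ineq-chains} to the complement graph $\ov{G}$ and then translating every quantity back to $G$ through complementation. So the first thing I would do is assemble the dictionary: $\omega(\ov{G}) = \alpha(G)$, $\varphi(\ov{G}) = \Omega(G)$, $\varphi^{\alpha}(\ov{G}) = \Omega^{\alpha}(G)$ and $\varphi^{\beta}(\ov{G}) = \Omega^{\beta}(G)$. The first two are standard; the last two are the ones worth a sentence of justification: since $n - \deg_{\ov{G}}(v) = \deg_G(v) + 1$ for every vertex $v$, a set $A$ is $\alpha$-small in $\ov{G}$ exactly when $\sum_{v \in A} \frac{1}{\deg_G(v)+1} \le 1$, i.e.\ exactly when $A$ is $\alpha$-large in $G$, and likewise ${\rm d}(A)$ computed in $\ov{G}$ equals $n - 1 - {\rm d}(A)$ computed in $G$, so $A$ is $\beta$-small in $\ov{G}$ iff ${\rm d}_{G}(A) \ge |A| - 1$, i.e.\ iff $A$ is $\beta$-large in $G$. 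Minimum partitions therefore correspond, giving the two non-obvious dictionary entries. (These are already recorded as known facts in the paragraph preceding the statement, so they may simply be cited.)

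Next I would handle the two scalar translations that appear on the right-hand ends of the chains. Since ${\rm d}(\ov{G}) = n - 1 - {\rm d}(G)$, we get $n - {\rm d}(\ov{G}) = {\rm d}(G) + 1$, hence $\left\lceil \frac{n}{\,n - {\rm d}(\ov{G})\,} \right\rceil = \left\lceil \frac{n}{\,{\rm d}(G) + 1\,} \right\rceil \ge \frac{n}{{\rm d}(G) + 1}$; and because $\ov{\ov{G}} = G$, we have $CW(\ov{\ov{G}}) = CW(G)$, where $CW$ is as defined just before Theorem~\ref{thm_ineq-chains}.

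With the dictionary in place, part~(i) is just Theorem~\ref{thm_ineq-chains}(i) applied to $\ov{G}$, namely $\omega(\ov{G}) \ge \varphi(\ov{G}) \ge \varphi^{\alpha}(\ov{G}) \ge \varphi^{\beta}(\ov{G}) \ge \left\lceil \frac{n}{n - {\rm d}(\ov{G})} \right\rceil$, rewritten term by term; and part~(ii) is Theorem~\ref{thm_ineq-chains}(ii) applied to $\ov{G}$, namely $\omega(\ov{G}) \ge \varphi(\ov{G}) \ge \varphi^{\alpha}(\ov{G}) \ge CW(\ov{\ov{G}}) \ge \left\lceil \frac{n}{n - {\rm d}(\ov{G})} \right\rceil$, rewritten the same way, finally discarding the ceiling on the right to reach the stated bound $\frac{n}{{\rm d}(G)+1}$. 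I do not expect a genuine obstacle here: the whole argument is a change of variable $G \mapsto \ov{G}$, and the only steps needing any care are the verification of the $\alpha$- and $\beta$-versions of the dictionary and remembering that the stated final inequality is the weakened (ceiling-free) form of what the chain actually delivers.
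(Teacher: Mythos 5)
Your proof is correct and is essentially the paper's own argument: the paper derives this corollary by applying Theorem~\ref{thm_ineq-chains} to $\ov{G}$ together with the complementation identities $\alpha(G)=\omega(\ov{G})$, $\Omega(G)=\varphi(\ov{G})$, $\Omega^{\alpha}(G)=\varphi^{\alpha}(\ov{G})$ and $\Omega^{\beta}(G)=\varphi^{\beta}(\ov{G})$, exactly as you do. Your explicit verification of the $\alpha$- and $\beta$-dictionary entries and of $n-{\rm d}(\ov{G})={\rm d}(G)+1$ is a welcome addition, since the paper merely cites these as known facts.
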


  \begin{cor}
 Let $G$ be a graph on $n$ vertices. Then\\[-5ex]
 \begin{enumerate}
   \item[(i)] $e(G) \ge \frac{n}{2} \left(\frac{n}{\Omega^{\beta}(G)}-1 \right)  \ge \frac{n}{2} \left(\frac{n}{\Omega^{\alpha}(G)}-1 \right)  \ge \frac{n}{2} \left(\frac{n}{\Omega(G)}-1 \right) \ge \frac{n}{2} \left(\frac{n}{\alpha(G)}-1 \right)$;
 \item[(ii)] $e(G) \ge \frac{n}{2} \left(\frac{n}{CW(G)}-1 \right)  \ge \frac{n}{2} \left(\frac{n}{\Omega^{\alpha}(G)}-1 \right)  \ge \frac{n}{2} \left(\frac{n}{\Omega(G)}-1 \right) \ge \frac{n}{2} \left(\frac{n}{\alpha(G)}-1 \right)$.
 \end{enumerate}
 \end{cor}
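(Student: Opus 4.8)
The plan is to obtain both inequality chains directly from the dual chain established in the corollary immediately preceding this one, namely $\alpha(G)\ge\Omega(G)\ge\Omega^{\alpha}(G)\ge\Omega^{\beta}(G)\ge\frac{n}{{\rm d}(G)+1}$ and $\alpha(G)\ge\Omega(G)\ge\Omega^{\alpha}(G)\ge CW(G)\ge\frac{n}{{\rm d}(G)+1}$, combined with the elementary identity $2e(G)=n\,{\rm d}(G)$. The only analytic observation needed is that, for fixed $n>0$, the map $x\mapsto\frac{n}{2}\bigl(\frac{n}{x}-1\bigr)$ is strictly decreasing on $(0,\infty)$, so that any inequality $a\le b$ between positive reals yields $\frac{n}{2}\bigl(\frac{n}{a}-1\bigr)\ge\frac{n}{2}\bigl(\frac{n}{b}-1\bigr)$.

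For part (i), I would first rewrite the rightmost bound $\Omega^{\beta}(G)\ge\frac{n}{{\rm d}(G)+1}$ as ${\rm d}(G)\ge\frac{n}{\Omega^{\beta}(G)}-1$ and multiply by $n/2$, which gives $e(G)=\frac{n}{2}{\rm d}(G)\ge\frac{n}{2}\bigl(\frac{n}{\Omega^{\beta}(G)}-1\bigr)$; this is the leftmost inequality. The three remaining inequalities of (i) are then simply the image of the chain $\Omega^{\beta}(G)\le\Omega^{\alpha}(G)\le\Omega(G)\le\alpha(G)$ under the decreasing map above. Part (ii) has the identical structure: the bound $CW(G)\ge\frac{n}{{\rm d}(G)+1}$ gives $e(G)\ge\frac{n}{2}\bigl(\frac{n}{CW(G)}-1\bigr)$, and the chain $CW(G)\le\Omega^{\alpha}(G)\le\Omega(G)\le\alpha(G)$ supplies the rest.

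An equivalent route, matching the way the complementary Corollary~\ref{cor_edges_2} was obtained, is to apply that corollary to $\ov{G}$ and use $e(\ov{G})=\binom{n}{2}-e(G)$ together with the identities $\Omega^{\beta}(G)=\varphi^{\beta}(\ov{G})$, $\Omega^{\alpha}(G)=\varphi^{\alpha}(\ov{G})$, $\Omega(G)=\varphi(\ov{G})$, $\alpha(G)=\omega(\ov{G})$ and the trivial $CW(\ov{\ov{G}})=CW(G)$. Subtracting the chain $e(\ov{G})\le\frac{(\varphi^{\beta}(\ov{G})-1)n^2}{2\varphi^{\beta}(\ov{G})}\le\cdots\le\frac{(\omega(\ov{G})-1)n^2}{2\omega(\ov{G})}$ from $\binom{n}{2}$ reverses it, and the algebraic identity $\binom{n}{2}-\frac{(x-1)n^2}{2x}=\frac{n}{2}\bigl(\frac{n}{x}-1\bigr)$ turns it, term by term, into exactly the asserted chains.

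I do not expect a genuine obstacle here: the statement is purely an edge-count reformulation of facts already proved. The only point that requires a little care is bookkeeping the reversal of the order of the chain when passing from the $\varphi^{\ast}$-parameters of $\ov{G}$ to the $\Omega^{\ast}$-parameters of $G$ (equivalently, when applying the decreasing map $x\mapsto\frac{n}{2}(\frac{n}{x}-1)$), so that the four quantities end up in the correct order in each displayed chain.
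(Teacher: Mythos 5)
Your proposal is correct and follows essentially the same route as the paper, which derives this corollary by complementation from Theorem \ref{thm_ineq-chains} and the edge-count corollary for $\varphi^{\ast}$, exactly as in your second paragraph (your first derivation via the preceding $\Omega$-chain is just a repackaging of the same computation). The algebraic identity $\binom{n}{2}-\frac{(x-1)n^{2}}{2x}=\frac{n}{2}\left(\frac{n}{x}-1\right)$ and the order reversal are handled correctly.
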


Let $S^{\alpha}(G)$ and $S^{\beta}(G)$ be the maximum cardinality of an $\alpha$-small set and of a $\beta$-small set of $G$, respectively. Analogously, let Let $L^{\alpha}(G)$ and $L^{\beta}(G)$ be the maximum cardinality of an $\alpha$-large set and of a $\beta$-large set of $G$, respectively. We finish this section with the following theorem.

\begin{thm}
Let $G$ be a graph on $n$ vertices, with maximum degree $\Delta$ and minimum degree $\delta$. Then\\[-5ex]
\begin{enumerate}
\item[(i)] $\alpha(G) \le S_0(G) \le S^\alpha(G) \le S^\beta(G)$\\
 $ \le \left\lfloor \frac{n-\Delta}{2} + \sqrt{\frac{(n-\Delta)^2}{4} + n\Delta - 2 e(G)} \right\rfloor
                                     \le \left\lfloor \frac{1}{2} + \sqrt{ \frac{1}{4} + n^2 - n - 2e(G)} \right\rfloor$;
\item[(ii)] $\omega(G) \le L_0(G) \le L^\alpha(G) \le L^\beta(G)$\\
  $\le \left\lfloor \frac{\delta + 1}{2} + \sqrt{\frac{(\delta+1)^2}{4} - n\delta + 2 e(G)} \right\rfloor
                                 \le \left\lfloor \frac{1}{2} + \sqrt{\frac{1}{4} + 2 e(G)}\right\rfloor$.
\end{enumerate}
\end{thm}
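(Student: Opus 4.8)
The plan is to prove each of the two chains by verifying its links one at a time, and then to reduce chain (ii) to chain (i) by complementation at the end. For chain (i): the first link $\alpha(G)\le S_0(G)$ is just Theorem \ref{thm_trivial-bounds}(i) with $k=0$. The next two links $S_0(G)\le S^{\alpha}(G)\le S^{\beta}(G)$ come straight from Observation \ref{obs_alpha-beta}: a maximum small set realizing $S_0(G)$ is in particular an $\alpha$-small set, so $S^{\alpha}(G)\ge S_0(G)$, and a maximum $\alpha$-small set is a $\beta$-small set, so $S^{\beta}(G)\ge S^{\alpha}(G)$.

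The heart of the matter is the bound $S^{\beta}(G)\le\big\lfloor\frac{n-\Delta}{2}+\sqrt{\frac{(n-\Delta)^2}{4}+n\Delta-2e(G)}\big\rfloor$, which I would obtain by imitating the proof of Theorem \ref{thm_Hansen}(i). Let $A$ be a maximum $\beta$-small set, so ${\rm d}(A)\le n-|A|$, i.e.\ $\sum_{v\in A}\deg(v)\le|A|(n-|A|)$. Splitting the degree sum over $A$ and $V(G)\setminus A$ and bounding each term of the latter sum by $\Delta$, I get
\[
2e(G)=\sum_{v\in A}\deg(v)+\sum_{v\in V(G)\setminus A}\deg(v)\le|A|(n-|A|)+\Delta(n-|A|)=-|A|^2+(n-\Delta)|A|+n\Delta,
\]
so $|A|^2-(n-\Delta)|A|-n\Delta+2e(G)\le 0$; solving this quadratic inequality in $|A|$ and taking floors (as $|A|\in\mathbb{Z}$) gives the claimed bound. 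The only place the hypothesis on $A$ enters is in bounding $\sum_{v\in A}\deg(v)$, and a $\beta$-small set supplies exactly the same bound on that sum as a $0$-small set would — this is precisely why the stronger statement with $S^{\beta}$ in place of $S_0$ costs nothing extra. The last link in chain (i) follows, just as in Corollary \ref{cor_Hansen}, from the monotonicity of $\frac{n-\Delta}{2}+\sqrt{\frac{(n-\Delta)^2}{4}+n\Delta-2e(G)}$ in $\Delta$ together with $\Delta\le n-1$; substituting $\Delta=n-1$ turns this expression into $\frac12+\sqrt{\frac14+n^2-n-2e(G)}$.

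For chain (ii) I would apply chain (i) to $\overline{G}$. Using $\omega(G)=\alpha(\overline{G})$, $L_0(G)=S_0(\overline{G})$, $L^{\alpha}(G)=S^{\alpha}(\overline{G})$ and $L^{\beta}(G)=S^{\beta}(\overline{G})$ — the last two derived from the definitions exactly as in Observation \ref{obs1}(iii) — the left portion of the chain is immediate. For the right portion I plug into the bound from (i) the parameters of $\overline{G}$: maximum degree $n-1-\delta$, minimum degree $n-1-\Delta$, and $e(\overline{G})=\binom{n}{2}-e(G)$. A short computation turns $\frac{n-\Delta(\overline{G})}{2}$ into $\frac{\delta+1}{2}$, turns $n\Delta(\overline{G})-2e(\overline{G})$ into $-n\delta+2e(G)$, and turns $n^2-n-2e(\overline{G})$ into $2e(G)$, reproducing exactly the two right-hand expressions of (ii).

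I expect the only non-routine step to be the observation that a $\beta$-small set yields the same degree-sum estimate as an ordinary small set; once that is in hand, the whole theorem is a bookkeeping exercise chaining Observations \ref{obs1} and \ref{obs_alpha-beta}, Theorem \ref{thm_trivial-bounds}, the quadratic estimate above, the $\Delta$-monotonicity from Corollary \ref{cor_Hansen}, and complementation, with no genuinely new difficulty.
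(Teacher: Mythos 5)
Your proposal is correct and follows essentially the same route as the paper: the left-hand chains from Theorem \ref{thm_trivial-bounds}(i) with $k=0$ and Observation \ref{obs_alpha-beta}, the quadratic bound by redoing the proof of Theorem \ref{thm_Hansen} with the degree-sum estimate $\sum_{v\in A}\deg(v)\le|A|(n-|A|)$ supplied by the $\beta$-small hypothesis, and part (ii) by complementation. You have merely made explicit the details the paper leaves as ``analogous to the proof of Theorem \ref{thm_Hansen} in case $k=0$.''
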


\begin{proof}
The inequality chains $\alpha(G) \le S_0(G) \le S^\alpha(G) \le S^\beta(G)$ and $\omega(G) \le L_0(G) \le L^\alpha(G) \le L^\beta(G)$ follow from Theorem \ref{thm_trivial-bounds}(i) for $k=0$ and Observation \ref{obs_alpha-beta}. The proof of the right side inequalities is analogous to the proof of the Theorem \ref{thm_Hansen} in case $k=0$.
\end{proof}

Note also that Corollary \ref{cor_Hansen} follows from this theorem because of $S_0(G) \le S^{\alpha}(G)$ and $L_0(G) \le L^{\alpha}(G)$.

\end{document}